\let\originalleft\left
\let\originalright\right
\renewcommand{\left}{\mathopen{}\mathclose\bgroup\originalleft}
\renewcommand{\right}{\aftergroup\egroup\originalright}
\newcommand{\relmiddle}[1]{\mathrel{}\middle#1\mathrel{}}
\newcommand{\cond}{\relmiddle{|}}
\newcommand{\N}{\mathbb N}
\newcommand{\1}{\mathbbm 1}
\newcommand{\Z}{\mathbb Z}
\newcommand{\R}{\mathbb R}
\renewcommand{\P}{\mathbb{P}_p}
\newcommand{\Pq}{\mathbb{P}_q}
\newcommand{\E}{\mathbb{E}_p}
\newcommand{\Eq}{\mathbb{E}_q}
\newcommand{\F}{\mathcal F}
\newcommand{\eps}{\varepsilon}
\renewcommand{\epsilon}{\varepsilon}
\newtheorem{theorem}{Theorem}[section]
\newtheorem{lemma}[theorem]{Lemma}
\newtheorem{proposition}[theorem]{Proposition}
\theoremstyle{definition}
\newtheorem{definition}[theorem]{Definition}
\newtheorem{remark}[theorem]{Remark}
\newcommand{\cref}[1]{\text{\upshape\ref{#1}}}
\newcommand{\pcr}{\vec p_{\textup{cr}}(\Z^d)}
\newcommand{\C}{C}
\newcommand{\A}{A^{\textup{conn}}}
\renewcommand{\AA}{\accentset{\leftarrow}{A}^{\textup{conn}}} 
\newcommand{\AAA}{A^{\textup{repair}}}
\newcommand{\Ap}{{B}^{\textup{conn}}}
\newcommand{\AAp}{\accentset{\leftarrow}{B}^{\textup{conn}}} 
\newcommand{\AAAp}{B^{\textup{repair}}}
\newcommand{\omegas}{{\omega}^{\textup{b}}}
\newcommand{\Ps}{\P^{\,\textup{b}}}
\newcommand{\omegass}{{\widetilde\omega}^{\,\textup{b}}}
\newcommand{\Pss}{\widetilde{\mathbb P}_p^{\textup{b}}}
\newcommand{\omegae}{\omega^{\textup{e}}}
\newcommand{\Pe}{\P^{\,\textup{e}}}
\newcommand{\Ee}{\E^{\textup{e}}}
\newcommand{\omegasl}{\omega^{\textup{sl}}}
\newcommand{\Psl}{\P^{\,\textup{sl}}}
\newcommand{\Esl}{\E^{\,\textup{sl}}}
\newcommand{\Psle}{\P^{\,\textup{sl,e}}}
\newcommand{\Esle}{\E^{\,\textup{sl,e}}}
\newcommand{\G}{G}
\newcommand{\GG}{\accentset{\leftarrow}{G}}
\newcommand{\GGG}{\widehat G}
\newcommand{\I}{\accentset{\to}I}
\newcommand{\J}{\accentset{\leftarrow}{I}}
\newcommand{\f}{\mathfrak f}
\newcommand{\tEa}{\widetilde{E}_1}
\newcommand{\tEb}{\widetilde{E}_2}
\newcommand{\tEc}{\widetilde{E}_3}
\renewcommand{\nleftrightarrow}{\centernot\leftrightarrow}
\begin{document}
\title{Number of paths in oriented percolation as zero temperature limit of directed polymer}
\author[R.~Fukushima]{Ryoki Fukushima}
\address{Institute of Mathematics, University of Tsukuba, 1-1-1 Tennodai, Tsukuba, Ibaraki 305--8571, Japan}
\email{ryoki@math.tsukuba.ac.jp}

\author[S.~Junk]{Stefan Junk}
\address{Institute of Mathematics, University of Tsukuba, 1-1-1 Tennodai, Tsukuba, Ibaraki 305--8571, Japan}
\curraddr{Advanced Institute for Materials Research Tohoku University Mathematical Group, 2-1-1 Katahira, Aoba-ku, Sendai, 980-8577 Japan}
\email{sjunk@tohoku.ac.jp}

\begin{abstract}
We prove that the free energy of directed polymer in Bernoulli environment converges to the growth rate for the number of open paths in super-critical oriented percolation as the temperature tends to zero. Our proof is based on rate of convergence results which hold uniformly in the temperature. We also prove that the convergence rate is locally uniform in the percolation parameter inside the super-critical phase, which implies that the growth rate depends continuously on the percolation parameter. 
\end{abstract}

\maketitle

\section{Introduction}
This article concerns the number of open paths in oriented percolation on $\Z_+\times \Z^d$. The question on the existence of an infinite open path is well-studied and it is well-known that there is a phase transition for all $d\ge 1$. The question on the number of open paths in the super-critical phase dates back to the work by Darling~\cite{Dar91}, but it has received a renewed interest in recent years. 
Among others, it was proved by Garet--Gou\'er\'e--Marchand in~\cite{GGM17} that the number of open paths of length $n$, starting from a percolation point, grows like $e^{\tilde\alpha_p n+o(n)}$ for some deterministic $\tilde{\alpha}_p>0$ in the super-critical phase. In fact, they proved the existence of the growth rate for any fixed direction and $\tilde{\alpha}_p$ is the supremum of the directional growth rates. However, since the growth rate $\tilde\alpha_p$ is found by using an ergodic theorem, it remains a non-trivial task to study its properties as a function of the direction and the percolation parameter. It is shown to be a concave function of the direction but strict concavity has not been proved. As for the dependence on the percolation parameter, even the continuity has not been proved. 

In this article, we establish two approximation results for $\tilde{\alpha}_p$:
\begin{itemize}
 \item The first is the finite volume approximation with a quantitative error control. 
 \item The second is the approximation by the positive temperature version, which is the free energy for the directed polymer in the Bernoulli random environment.
\end{itemize}
These results aim at better understanding of $\tilde{\alpha}_p$. In particular, we show the continuity of $\tilde{\alpha}_p$ in $p\in(\pcr,1]$ as a corollary to the first result. 

\section{Setting and known results}
To fix the idea, we focus on the nearest neighbor site percolation model, while the results easily extend to bond percolation and to finite range generalizations. The reason for this choice is that the directed polymer models are mostly studied in the site disorder setting in the literature. Let $(\omega=(\omega(t,x))_{(t,x)\in \Z\times \Z^d},\P)$ be a collection of independent Bernoulli random variables with parameter $1-p$. The time-space point $(t,x)$ is said to be \emph{open} if $\omega(t,x)=0$ and \emph{closed} otherwise. This goes against the convention in the percolation literature but makes the relation to the directed polymer simple. For non-negative integers $m\le n$, let $\F_{[m,n]}$ be the $\sigma$-field generated by $(\omega(t,x))_{(t,x)\in [m,n]\times \Z^d}$ and write $\F_n=\F_{[0,n]}$ for short. A \emph{path} is a function $\pi$ from $\{s,s+1,\ldots,t\}\subset \Z$ to $\Z^d$ with $|\pi(r)-\pi(r+1)|_1\le 1$ for all $r=s,s+1,\ldots,t-1$, and it is called open in $\omega$, or $\omega$-open for short, if $\omega(r,\pi(r))=0$ for all $r=s+1,\ldots,t$. We emphasize that the initial site of an open path is not required to be open. For $A,B\subset\Z^d$ and $s< t$, we write
\begin{align*}
\{(s,A)\leftrightarrow(t,B)\}
\end{align*}
for the event that there exists an open path $\pi:\{s,s+1,...,t\}\to\Z^d$ such that $\pi(s)\in A$ and $\pi(t)\in B$, and
\begin{align*}
\{(s,A)\leftrightarrow\infty\}:=\bigcap_{t\geq s}\{(s,A)\leftrightarrow(t,\Z^d)\}.
\end{align*}
It is proved in~\cite{BG91} that there exists $\pcr\in(0,1)$ such that there exists an infinite open path from $(0,0)\in\Z_+\times \Z^d$ with positive probability if and only if the percolation parameter $p>\pcr$. In what follows, we always assume $p>\pcr$ so that $\P((0,0)\leftrightarrow \infty)>0$. Let
\begin{align*}
N_n(\omega):=\#\{\text{open paths }\pi\text{ of length $n$ starting at }(0,0)\}.
\end{align*}
For a path $\pi$ and an environment $\omega$, we define the energy by
\begin{align}\label{eq:H_n}
H_n(\omega,\pi):=\#\{t\in\{1,...,n\}:(t,\pi(t))\text{ is closed}\},
\end{align}
and for $\beta  \in[0,\infty]$, called the inverse temperature, the partition function by
\begin{align}\label{eq:partition_funct}
Z^\beta_n(\omega):=\sum_\pi e^{-\beta H_n(\omega,\pi)},
\end{align}
where $\pi$ runs over all paths of length $n$ starting in $(0,0)$. We use the convention $e^{-\infty}=0$, so that $Z_n^\infty=\sum_\pi\1_{\{\pi\text{ is open}\}}=N_n$. Note that we have $\lim_{\beta\to\infty} Z^\beta_n(\omega)=N_n(\omega)$ as long as $n\in\N$ is fixed.

For finite $\beta \ge 0$, this model is intensively studied under the name of \emph{directed polymer in random environment} and relatively well-understood. The following result is proved in~\cite[Proposition~1.5]{CSY03}. 
\begin{theorem}\label{thm:pos_temp}
For every $\beta\in[0,\infty)$ and $p\in[0,1]$, there exists $\f(\beta,p)\in (-\infty,\log(2d+1)]$ such that, $\P$-almost surely,
\begin{align}\label{eq:free}
\lim_{n\to\infty} \frac 1n\log Z^\beta_n(\omega)=\lim_{n\to\infty} \frac 1n\E[\log Z^\beta_n(\omega)]=\f(\beta,p).
\end{align}
\end{theorem}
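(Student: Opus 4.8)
The plan is to combine a superadditivity argument for the annealed quantity $\E[\log Z^\beta_n]$ (handled by Fekete's lemma) with a linear variance bound for the quenched quantity $\log Z^\beta_n$; fix $\beta\in[0,\infty)$ throughout. Since there are $(2d+1)^n$ paths of length $n$ and the constant path alone contributes at least $e^{-\beta n}$, we have the deterministic bounds $e^{-\beta n}\le Z^\beta_n(\omega)\le(2d+1)^n$, so $-\beta\le\frac1n\log Z^\beta_n\le\log(2d+1)$ uniformly in $\omega$ and $n$. First I would show that $a_n:=\E[\log Z^\beta_n]$ is superadditive. Decomposing each path of length $n+m$ according to its position at time $n$ gives the identity
\[
Z^\beta_{n+m}(\omega)=\sum_{x\in\Z^d}Z^\beta_n(\omega;0\to x)\,\widetilde Z_{n,m,x}(\omega),
\]
where $Z^\beta_n(\omega;0\to x)$ is the point-to-point partition function over the first $n$ steps (summing over paths from the origin to $x$; it is $\F_n$-measurable and vanishes for all but finitely many $x$), while $\widetilde Z_{n,m,x}(\omega)$, built from the $m$ time-steps after time $n$, is independent of $\F_n$ and has the law of $Z^\beta_m$ for every $x$, by translation invariance. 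Conditioning on $\F_n$, writing $p_x:=Z^\beta_n(\omega;0\to x)/Z^\beta_n(\omega)$ for the random polymer endpoint distribution, and applying Jensen's inequality to the concave function $\log$ with the weights $(p_x)_x$ gives the pathwise bound $\log Z^\beta_{n+m}\ge\log Z^\beta_n+\sum_x p_x\log\widetilde Z_{n,m,x}$; taking expectations and using $\E[\log\widetilde Z_{n,m,x}\mid\F_n]=\E[\log Z^\beta_m]$ together with $\sum_x p_x=1$ yields $a_{n+m}\ge a_n+a_m$. Fekete's lemma then gives $\frac1n a_n\to\sup_n\frac1n a_n=:\f(\beta,p)$; bounding $Z^\beta_n$ below by the constant-path term gives $a_n\ge-\beta(1-p)n$, while $Z^\beta_n\le(2d+1)^n$ gives $a_n\le n\log(2d+1)$, so $\f(\beta,p)\in(-\infty,\log(2d+1)]$.

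The second ingredient is the estimate $\operatorname{Var}(\log Z^\beta_n)\le C(\beta,p)\,n$, which I would prove by the Efron--Stein inequality: replacing a single coordinate $\omega(t,x)$ (with $1\le t\le n$) multiplies $Z^\beta_n$ by a factor in $[e^{-\beta},1]$, and more precisely perturbs $\log Z^\beta_n$ by at most $e^\beta-1$ times the polymer probability of being at $x$ at time $t$; summing the squared influences and using that these probabilities sum to one over $x\in\Z^d$ for each fixed $t$ produces the linear bound. With this in hand, Chebyshev's inequality and the Borel--Cantelli lemma along the subsequence $n_k=k^2$, together with $\frac1{n_k}a_{n_k}\to\f(\beta,p)$, give $\frac1{n_k}\log Z^\beta_{n_k}\to\f(\beta,p)$ $\P$-almost surely. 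To upgrade this to the full sequence, for $n_k\le n<n_{k+1}$ I would extend length-$n_k$ paths by a constant path to obtain $Z^\beta_n(\omega)\ge e^{-\beta(n-n_k)}Z^\beta_{n_k}(\omega)$, while $Z^\beta_n(\omega)\le(2d+1)^{n-n_k}Z^\beta_{n_k}(\omega)$ is immediate; since $(n-n_k)/n\to0$ and $n_k/n\to1$, both bounds force $\frac1n\log Z^\beta_n\to\f(\beta,p)$ $\P$-a.s., and the convergence in $L^1$ (hence the identity with $\lim_n\frac1n a_n$) follows from the uniform bounds above by dominated convergence.

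I expect the linear variance bound to be the main obstacle. A crude bounded-differences (Azuma) estimate treats all $\sim n^{d+1}$ relevant coordinates of $\omega$ as equally influential and only yields a variance proxy of order $n^{d+1}$, which is far too weak even to conclude almost sure convergence; one genuinely needs to exploit that the influence of $\omega(t,x)$ is controlled by the polymer's chance of visiting the space-time point $(t,x)$, and that these chances sum to the path length. An alternative route replaces the concentration step by applying Kingman's subadditive ergodic theorem to $-\log Z^\beta_n(\omega;0\to0)$, which converges almost surely directly; but one must then argue separately --- using concavity and the lattice symmetries of the directional free energies --- that restricting the endpoint to the origin does not change the free energy, which involves a comparable amount of work.
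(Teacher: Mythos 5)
Your argument is correct, but note that the paper does not prove Theorem~\ref{thm:pos_temp} at all: it is imported from \cite{CSY03} (Proposition~1.5), so the relevant comparison is with the standard literature proof rather than with anything internal to this paper. Your route --- superadditivity of $\E[\log Z^\beta_n]$ via the time-$n$ Markov decomposition and Jensen, Fekete's lemma, a linear variance bound via Efron--Stein with the influence of $\omega(t,x)$ controlled by $(e^\beta-1)$ times the polymer's probability of visiting $(t,x)$, Chebyshev plus Borel--Cantelli along $n_k=k^2$, and the interpolation $e^{-\beta(n-n_k)}Z^\beta_{n_k}\le Z^\beta_n\le (2d+1)^{n-n_k}Z^\beta_{n_k}$ --- is sound; in particular the influence bound holds for both directions of the flip (closing a site can only decrease the occupation probability), so the sum over $x$ of the squared influences at a fixed time $t$ is at most $(e^\beta-1)^2$, as you claim. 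The standard proof differs only in the concentration step: it groups the environment by \emph{time slices} rather than by sites. If two environments differ only on $\{k\}\times\Z^d$, every path's energy changes by at most $1$, hence $\left|\log Z^\beta_n(\omega)-\log Z^\beta_n(\omega')\right|\le\beta$; applying Azuma--Hoeffding to the martingale $M_k=\E[\log Z^\beta_n\mid\F_k]$ then gives Gaussian concentration at scale $\beta\sqrt{n}$, which is summable along the full sequence and dispenses with your subsequence/interpolation step. So your side remark that a bounded-differences estimate ``only yields a variance proxy of order $n^{d+1}$'' is inaccurate: that would be the cost of conditioning site by site, not slice by slice. What is true --- and is precisely the point of Section~\ref{sec:difficulty} of the paper --- is that both the slice-wise Azuma bound (of order $\beta\sqrt{n}$) and your Efron--Stein constant (of order $(e^\beta-1)^2 n$) degenerate as $\beta\to\infty$, which is why the paper develops the slab-insertion coupling to obtain concentration uniformly in $\beta\in[0,\infty]$; for the fixed-$\beta$ statement of Theorem~\ref{thm:pos_temp}, either your argument or the classical one is perfectly adequate.
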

Moreover, following the argument in~\cite[Section~2]{CFNY15}, one can shows that the function $\f\colon [0,\infty)\times (\pcr,1]\to[0,\log(2d+1)]$ is jointly continuous. In fact, much more is known including the behavior of paths under the corresponding Gibbs measure. See~\cite{Com17} for a recent detailed review. Note that the partition function \eqref{eq:partition_funct} differs from the usual definition because we do not divide by the number of paths, which results in a shift by $\log(2d+1)$ in \eqref{eq:free} compared to the usual free energy.

The model at zero temperature, i.e., $\beta=\infty$, is a singular limit of the directed polymer model and more difficult to analyze. The result most relevant to us is the existence of the free energy, which is the growth rate of the number of open paths. 
\begin{theorem}[\cite{GGM17}]
\label{thm:ggm}
For every $p>\pcr$, there exists $\tilde\alpha_p\in(0,\log(2d+1))$ such that $\P(\cdot|(0,0)\leftrightarrow\infty)$-almost surely,
\begin{align*}
\lim_{n\to\infty} \frac 1n\log N_n=\tilde\alpha_p.
\end{align*}
\end{theorem}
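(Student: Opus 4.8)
We sketch a proof. The plan is to realize $\tilde\alpha_p$ as the growth rate of a count that behaves well under concatenation, and to extract the limit from a renewal-type decoupling of the supercritical oriented cluster; throughout we write $\Omega_\infty:=\{(0,0)\leftrightarrow\infty\}$. For an $\omega$-open path $\pi$ of length $n$ from $(0,0)$ let $\pi(n)$ be its endpoint and let $M_n(y)$ be the number of such paths with $\pi(n)=y$, so $N_n=\sum_y M_n(y)$, and set
\begin{align*}
\widehat N_n(\omega):=\sum_{y\in\Z^d}M_n(y)(\omega)\,\1_{\{(n,y)\leftrightarrow\infty\}}.
\end{align*}
Splitting an open path at an intermediate level gives, for $m<n$, the identity
\begin{align*}
\widehat N_n(\omega)=\sum_{y\in\Z^d}M_m(y)(\omega)\,\widehat N_{n-m}\big(\theta_{(m,y)}\omega\big),\qquad\theta_{(m,y)}\omega(t,x):=\omega(t+m,x+y),
\end{align*}
and appending one step of an infinite open path to a path counted by $\widehat N_n$ shows that $n\mapsto\widehat N_n$ is nondecreasing and $\ge1$ on $\Omega_\infty$. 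Since a space-time point not connected to $\infty$ lies in a finite forward cluster whose temporal extent has an exponential tail, a Borel--Cantelli argument over the polynomially many such clusters that a length-$n$ path from the origin can meet shows that, eventually, $N_n\le n^{C(\omega)}\widehat N_n$ with $C(\omega)<\infty$ a.s.; hence $\tfrac1n\log N_n-\tfrac1n\log\widehat N_n\to0$ on $\Omega_\infty$, and it suffices to treat $\widehat N_n$.

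To produce the limit I would invoke a regeneration structure for the supercritical oriented cluster: an increasing sequence of random ``break levels'' $0=\ell_0<\ell_1<\ell_2<\cdots$, with increments $\ell_{k+1}-\ell_k$ ($k\ge1$) i.i.d.\ and exponentially integrable, at which the past and future of the cluster decouple so that the multiplicity $\widehat N$ factorizes across the blocks $(\ell_k,\ell_{k+1}]$. Consequently $\log\widehat N_{\ell_k}$ is, after the first block, a sum of i.i.d.\ terms with finite mean (each lies between $0$ and $(\ell_{k+1}-\ell_k)\log(2d+1)$). The strong law of large numbers then gives $\tfrac1k\log\widehat N_{\ell_k}\to c$, the renewal law of large numbers gives $\ell_k/k\to\lambda\in(0,\infty)$, and combining these with the monotonicity $\widehat N_{\ell_k}\le\widehat N_n\le\widehat N_{\ell_{k+1}}$ and $\ell_{k+1}-\ell_k=o(\ell_k)$ a.s.\ yields $\tfrac1n\log\widehat N_n\to c/\lambda=:\tilde\alpha_p$, $\P(\,\cdot\mid\Omega_\infty)$-a.s.; bounded convergence ($0\le\tfrac1n\log\widehat N_n\le\log(2d+1)$) also identifies $\tilde\alpha_p$ with $\lim_n\tfrac1n\E[\log N_n\mid\Omega_\infty]$. (Equivalently one can organize this directionally: a subadditive ergodic theorem gives a rate $\alpha_p(\hat x)$ for open paths from $(0,0)$ to a cluster point at level $n$ near $n\hat x$, and then $\tilde\alpha_p=\sup_{\hat x}\alpha_p(\hat x)$ since $N_n$ is a sum of only $O(n^d)$ terms $M_n(y)$.)

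For the two-sided bound on $\tilde\alpha_p$, a first-moment computation gives $\E[N_n]=\big((2d+1)p\big)^n$ (there are $(2d+1)^n$ paths of length $n$ and each is $\omega$-open with probability $p^n$), so by Jensen $\E[\log\widehat N_n\mid\Omega_\infty]\le\log\big(\E[N_n]/\P(\Omega_\infty)\big)$, whence $\tilde\alpha_p\le\log\big((2d+1)p\big)$, which for $p<1$ is strictly below $\log(2d+1)$. For the strict positivity, a block renormalization shows that with probability bounded away from $0$ the cluster of the origin contains a forward-branching subtree (each good block admitting two space-disjoint open continuations into distinct next-level good blocks), hence at least $2^{\floor{n/L}}$ open paths of length $n$ reaching $\infty$; together with the a.s.\ convergence above this forces $\tilde\alpha_p\ge(\log2)/L>0$.

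The main obstacle is the construction of the break levels (equivalently, of a stationary, ergodic ``environment viewed from the cluster'' along which the path multiplicities evolve as an additive cocycle) with the required exponential tails, together with, to a lesser extent, the Borel--Cantelli reduction from $N_n$ to $\widehat N_n$. Genuine work is needed here because $\log N_n$ is not an additive cocycle over any obvious stationary field and the endpoint carrying the bulk of the open paths is not a stopping level, so neither Fekete's lemma nor the subadditive ergodic theorem applies directly; one really needs the decoupling supplied by the renewal structure of the supercritical oriented cluster, which is the technical core of \cite{GGM17}.
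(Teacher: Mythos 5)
Your reduction from $N_n$ to $\widehat N_n$ (paths with percolating endpoints), and the two-sided bounds $0<\tilde\alpha_p\le\log\big((2d+1)p\big)<\log(2d+1)$, are fine as sketches. The gap is the heart of the argument: the ``break levels'' at which, you claim, the multiplicity $\widehat N$ factorizes into i.i.d.\ block contributions. The object that evolves from level to level is the whole vector $\big(M_n(y)\1_{\{(n,y)\leftrightarrow\infty\}}\big)_{y\in\Z^d}$, and crossing a block acts on it by multiplication with an infinite nonnegative random matrix; $\widehat N_n$ is the total mass of this product, and $\log\widehat N_n$ is not additive across blocks no matter how well the past and future of the cluster decouple, because the number of continuations after level $\ell_k$ depends on the spatial profile of the endpoints at that level, not only on their number. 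A genuine factorization $\widehat N_{\ell_{k+1}}=\widehat N_{\ell_k}\cdot(\text{i.i.d.\ factor})$ would require all surviving paths to pass through a single site at each break level, which is not something one can arrange with exponentially integrable gaps (the infinite cluster occupies a growing set of sites at each level). You acknowledge this yourself in the last paragraph: the construction of a usable regeneration/decoupling structure is exactly the technical core of \cite{GGM17}, and your proposal defers it rather than supplies it. Note also that \cite{GGM17} do not obtain $\log N_n$ as an i.i.d.\ sum; they prove directional growth rates by an almost-subadditive ergodic argument built on essential hitting times and then take a supremum over directions, so even the shape of your renewal scheme does not match a known proof. As written, the SLLN/renewal computation has no foundation, and the statement is not proved.

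For context: in this paper the theorem is quoted from \cite{GGM17}, but Remark~\ref{rem:existence} points out a self-contained, regeneration-free route using the paper's own machinery, which is closer in spirit to what you could complete. Almost super- and sub-additivity of $a_n^\infty=\E[\log N_n\mid (0,0)\leftrightarrow(n,\Z^d)]$ (Lemmas~\ref{lem:almost_superadd} and~\ref{lem:almost_subadd}), proved by a ``most popular endpoint'' argument combined with the uniform concentration bound, give convergence of the conditional means; Proposition~\ref{prop:conc_zero} together with Borel--Cantelli then upgrades this to the $\P(\cdot\mid(0,0)\leftrightarrow\infty)$-almost sure limit. That approach trades the delicate renewal structure for a quantitative concentration inequality plus approximate additivity, and would also render the $N_n$-versus-$\widehat N_n$ reduction in your first paragraph unnecessary.
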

\begin{remark}
In the earlier works~\cite{Dar91,Yos08b}, it was proved that $\tilde\alpha_p=\log (p(2d+1))$ when $d\ge 3$ and $p$ is sufficiently large. In addition, the upper bound $\tilde\alpha(p)\leq \log(p(2d+1))$ is valid in any dimension and in the whole super-critical phase. On the other hand, it is proved in~\cite{Lac12} that this inequality can be strict. A simple argument for $\tilde\alpha_p>0$ is sketched in \cite[Remark 1.4]{GGM17}.
\end{remark}
While Theorem~\ref{thm:ggm} establishes the existence of the growth rate, it does not tell us much about $\tilde\alpha_p$ (see \emph{Remarks and open questions} in~\cite[p.4075]{GGM17}). First, it is proved in~\cite[Theorem~1.2]{GGM17} that the growth rate along any prescribed slope $x$ exists and that $\tilde\alpha_p$ is realized as the supremum over $x$. Though the supremum is attained at $x=0$, it is not proved to be a unique maximizer. This is related to a major open problem in the directed polymer context~\cite[Open Problem 9.3]{Com17} and seems to be a rather hard problem. Second, it is not known if $\tilde\alpha_p$ is continuous in $p$. One of the results in this paper proves this continuity. 

In view of the aforementioned $\lim_{\beta\to\infty} Z^\beta_n(\omega)=N_n(\omega)$, we will sometimes use $\f(\infty,p)$ in place of  $\tilde\alpha_p$ in the sequel. This in particular makes the statement of Theorem~\ref{thm:main} intuitive. 

\section{Main results}
The following two theorems are about the rate of convergence. The first shows that the finite volume free energy $\frac1n \log Z_n^\beta$ is tightly concentrated around its conditional mean. The second shows that the conditional mean is close to the free energy. 
\begin{theorem}
\label{thm:conc}
For any $\delta,\eps>0$ and $r>0$, there exists $c_{\ref{thm:conc}}>0$ such that for all $\beta\in[0,\infty]$, $p> \pcr+\eps$ and all $n\in\N$,
\begin{equation}
  \label{eq:conc}
\P\left(\left|\log Z_n^\beta-\E[\log Z_n^\beta\mid (0,0)\leftrightarrow\infty]\right| \ge n^{\frac12+\delta}\cond (0,0)\leftrightarrow\infty \right)
\le c_{\ref{thm:conc}} n^{-r}.
\end{equation}
\end{theorem}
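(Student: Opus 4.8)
The plan is to trade the tail conditioning for a finite‑volume one and then run a bounded‑differences estimate for $\log Z_n^\beta$ carefully enough that everything stays uniform in the temperature. For the first, reduction step, set $\theta(p):=\P((0,0)\leftrightarrow\infty)$, so $\theta(p)\ge\theta(\pcr+\eps)>0$ for all $p\ge\pcr+\eps$ by monotonicity. In the supercritical phase the probability that the cluster of the origin survives to level $n$ but is finite, $\P\bigl(\{(0,0)\leftrightarrow(n,\Z^d)\}\setminus\{(0,0)\leftrightarrow\infty\}\bigr)$, decays like $Ce^{-cn}$ with $c,C$ depending only on $\pcr+\eps$. Hence replacing $\{(0,0)\leftrightarrow\infty\}$ by $E_n:=\{(0,0)\leftrightarrow(n,\Z^d)\}$ alters every probability in \eqref{eq:conc}, and the conditional mean of $\log Z_n^\beta$, by at most $Ce^{-cn}\bigl(1+n\log(2d+1)\bigr)=o(n^{-r})$. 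After this reduction all quantities are measurable with respect to the finitely many coordinates $(\omega(t,x))_{1\le t\le n,\ |x|_1\le t}$, and $N_n\ge1$ on $E_n$ forces $0\le\log Z_n^\beta\le\log Z_n^0=n\log(2d+1)$ for every $\beta\in[0,\infty]$, so nothing below is infinite. It thus suffices to prove \eqref{eq:conc} with $\P(\cdot\mid(0,0)\leftrightarrow\infty)$ replaced by $\mathbb Q:=\P(\cdot\mid E_n)$, centred at $\mathbb E_{\mathbb Q}[\log Z_n^\beta]$.

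Next, I would telescope $\log Z_n^\beta=\sum_{t=1}^nX_t$ with $X_t:=\log(Z_t^\beta/Z_{t-1}^\beta)$ and compute
\begin{equation*}
X_t=\log\bigl((2d+1)-(1-e^{-\beta})\,S_t\bigr),\qquad S_t:=\sum_{y}\nu_t(y)\,\1_{\{\omega(t,y)=1\}},\qquad \nu_t(y):=\sum_{z\,:\,|z-y|_1\le1}\mu^\beta_{t-1}(\pi(t-1)=z),
\end{equation*}
where $\mu^\beta_{t-1}$ is the polymer measure at time $t-1$. The structural point to exploit — valid \emph{whatever $\beta$ is and however concentrated $\mu^\beta_{t-1}$ is} — is that $0\le\nu_t(y)\le1$ and $\sum_y\nu_t(y)=2d+1$, so that conditionally on $\F_{t-1}$ the variable $S_t$ is a sum of independent $[0,1]$‑valued variables with $\sum_y\nu_t(y)^2\le2d+1$, whence $S_t$ satisfies a Bernstein inequality around its conditional mean $(1-p)(2d+1)$ with constants depending only on $d$; conditioning further on $E_n$ only changes the $\F_{t-1}$‑conditional law of $\omega(t,\cdot)$ into one absolutely continuous with density $\le\theta^{-1}$ with respect to it (since $\P(E_n\mid\F_{t-1})\ge\theta$ and $\P(E_n\mid\F_t)\le1$), so these bounds only degrade by that factor. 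As $X_t$ is decreasing in $S_t$, this controls the conditional fluctuations of $X_t$ uniformly in $\beta$ \emph{except} on the event $\mathcal B_t$ that $S_t$ is anomalously close to $2d+1$, i.e.\ that almost all the $\nu_t$‑mass lands on level‑$t$ closed sites, and one checks $\mathbb Q(\mathcal B_t\mid\F_{t-1})\le e^{-c\,\mathfrak m_t}$ with $\mathfrak m_t\gtrsim\bigl(\sum_y\nu_t(y)^2\bigr)^{-1}$ a measure of the spread of $\nu_t$.

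One then runs the Doob martingale $M_t:=\mathbb E_{\mathbb Q}[\log Z_n^\beta\mid\F_t]$, whose increment is $\bigl(X_t-\mathbb E_{\mathbb Q}[X_t\mid\F_{t-1}]\bigr)$ plus the ``future'' term $\mathbb E_{\mathbb Q}[\log(Z_n^\beta/Z_t^\beta)\mid\F_t]-\mathbb E_{\mathbb Q}[\log(Z_n^\beta/Z_t^\beta)\mid\F_{t-1}]$; the latter depends on the time slice at $t$ only through $\mu^\beta_t$ and through the reached set $R_t$, and is controlled by the same spread estimate (resampling the slice perturbs $\mu^\beta_t$ and $R_t$ only mildly unless $\mu^\beta_{t-1}$ is dangerously concentrated or $R_{t-1}$ is thin). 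The outcome is that each increment $D_t$ of $(M_t)$ is $\le C(d,p)$ outside an exceptional set $\mathcal B_t'\in\F_t$ and $\le n\log(2d+1)$ always; the first $\lceil(\log n)^2\rceil$ steps, where $\mu^\beta$ can genuinely be concentrated, contribute only $O((\log n)^4)=o(n^{1/2+\delta})$ to $\log Z_n^\beta$ and to its mean, while for the remaining steps the sets $\mathcal B_t'$ are, conditionally on $E_n$, super‑polynomially unlikely (see below). A bounded‑differences argument robust to rare large increments — truncate $M_t$ at level $C(d,p)$, apply Azuma–Hoeffding to the re‑centred truncated martingale, which yields $2e^{-cn^{2\delta}}$ at scale $n^{1/2+\delta}$, and union‑bound over the $\le n$ exceptional sets — then gives \eqref{eq:conc}. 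Uniformity in $p\in(\pcr+\eps,1]$ is automatic, all percolation inputs being monotone in $p$.

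\emph{The main obstacle} is precisely the input just quoted: that, conditionally on $E_n$, the polymer measure $\mu^\beta_{t-1}$ is spread over at least $(\log n)^2$ sites with super‑polynomially high probability, uniformly in $\beta\in[0,\infty]$ and in $t\in((\log n)^2,n]$. At $\beta=\infty$ this is a statement about the spatial spread of the set of open paths of length $t-1$ from the origin, and what makes it hold is that conditioning on percolation to level $n$ strongly favours ``fat'' clusters and suppresses the ``pinches'' needed for the open‑path count to localise; for finite $\beta$ the Gibbs reweighting by $e^{-\beta H}$ only spreads the measure further onto the numerous low‑energy paths, so no new difficulty arises there. Making this quantitative and uniform in $\beta$ — in a form that feeds into the bounded‑differences machinery above, simultaneously locally uniformly in $p$ — is the technical heart of the argument, and I would carry it out through a renormalisation of space–time into good blocks on which the surviving cluster, and hence the support of the polymer measure, is provably macroscopic.
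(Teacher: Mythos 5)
Your reduction from conditioning on $\{(0,0)\leftrightarrow\infty\}$ to conditioning on $\{(0,0)\leftrightarrow(n,\Z^d)\}$ is fine and matches the paper. The genuine gap is the input you yourself flag as the ``main obstacle'': your whole scheme needs that, conditionally on survival to level $n$, the polymer measure $\mu^\beta_{t-1}$ (equivalently, at $\beta=\infty$, the uniform measure on open paths) is spread over at least $(\log n)^2$ sites with super-polynomially high probability, uniformly in $\beta\in[0,\infty]$ and in $t$. This is not a technical detail you can defer to a block renormalisation: it is precisely the statement the paper argues one cannot assume. Localization results (\cite{Yos10} for the number of open paths, and \cite[Theorem~6.1]{Com17} for polymers as $\beta\to\infty$) show that the path measure can put a macroscopic atom on $O(1)$ sites at many times, and conditioning on survival does not remove this. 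Quantitatively, if $\mu^\beta_{t-1}$ has mass $1-o(1)$ at a single site, then $\sum_y\nu_t(y)^2$ is of order one, your Bernstein/spread bound $e^{-c\mathfrak m_t}$ is only a constant, and with probability $(1-p)^{2d+1}$ (a constant per time step) one gets $S_t$ close to $2d+1$ and an increment of order $\beta$ (unbounded as $\beta\to\infty$). The same concentration scenario also wrecks your control of the ``future'' term of the Doob increment: when the favourite path runs along an edge of the percolation cone, resampling one slice can force an extremely long detour, so the influence is not $O(1)$ even outside events of super-polynomially small conditional probability. So the proof as proposed does not close; its heart is an unproven (and, in view of localization, most likely false as stated) delocalization claim.

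The paper's route is designed exactly to avoid proving any delocalization. Instead of resampling the slice at time $k$ (the bounded-difference coupling), it writes the martingale increment via a coupling in which an independent slab of thickness $\ell_n^4=(\log n)^8$ is \emph{inserted} at time $k$; then, even if all paths hug a favourite path of extremal slope, one finds forward and backward percolation points within distance $O(\ell_n^2)$ of the defect (Theorem~\ref{thm:largefinite}, Theorem~\ref{thm:large_initial}), and the slab gives the forward cone and the backward cluster enough time to meet inside the coupled zone (Theorem~\ref{thm:coupled}), producing a short repairing path with a controlled multiplicity of the path-to-path map; for $\beta<\infty$ the repairing path is additionally required to have no larger energy, so the cost is uniform in $\beta$. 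This yields $|\Delta_k|\le C\ell_n^4$ outside events of probability $e^{-c\ell_n}$, and since the increments are only bounded with high probability, the paper uses Burkholder's moment inequality plus Markov rather than Azuma--Hoeffding with truncation. If you want to salvage your approach, you would have to replace your delocalization input by such a repairing mechanism; as written, the proposal is not a proof.
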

\begin{theorem}
\label{thm:NRF}
For any $\delta,\eps>0$, there exists $c_{\ref{thm:NRF}}>0$ such that for all $\beta\in[0,\infty]$, $p> \pcr+\eps$ and all $n\in\N$,
\begin{equation}
\left|\frac1n \E[\log Z_n^\beta\mid (0,0)\leftrightarrow\infty]-\f(\beta,p)\right| \le c_{\ref{thm:NRF}}n^{-\frac12+\delta}.
\label{eq:NF} 
\end{equation}
\end{theorem}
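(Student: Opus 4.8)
The plan is to establish the estimate \eqref{eq:NF} via a standard superadditivity-type argument, but carried out with explicit control of all error terms so that the rate $n^{-1/2+\delta}$ emerges, and — crucially — uniformly in $\beta \in [0,\infty]$ and in $p > \pcr + \eps$. First I would show that the sequence $a_n := \E[\log Z_n^\beta \mid (0,0)\leftrightarrow\infty]$ is \emph{approximately superadditive}: there is an error $\eta_{m,n}$ such that $a_{m+n} \ge a_m + a_n - \eta_{m,n}$. The natural way to get this is to decompose an open path $\pi$ of length $m+n$ at time $m$ into its restriction $\pi|_{[0,m]}$, which is an open path from $(0,0)$, and $\pi|_{[m,m+n]}$, which is an open path from $(m,\pi(m))$; by the Markov/independence structure of $\omega$ this factorizes $Z_{m+n}^\beta$, modulo the issue that the midpoint $(m,\pi(m))$ need not percolate to infinity. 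Conditioning on $(0,0)\leftrightarrow\infty$ and then demanding that the midpoint also connects to infinity loses a multiplicative factor, but — and this is where the percolation input enters — in the super-critical phase the density of "points connected to infinity along an open path" among the endpoints of length-$m$ open paths is bounded below, uniformly for $p > \pcr+\eps$; this follows from restart/renewal arguments for super-critical oriented percolation as in \cite{GGM17,GGM17} (shape theorem / positive density of the infinite cluster along paths). Quantifying this gives $\eta_{m,n} = O(\log m + \log n)$ or better, uniformly in $\beta$.

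Next I would upgrade the approximate superadditivity to a rate. Writing $\f(\beta,p) = \lim_n a_n/n$ (which equals $\f(\beta,p)$ by Theorem~\ref{thm:pos_temp} for finite $\beta$ and Theorem~\ref{thm:ggm} for $\beta=\infty$, after checking the conditional expectation has the same limit as the unconditional one — harmless since $\P((0,0)\leftrightarrow\infty)>0$ and $|\log Z_n^\beta|\le n\log(2d+1)$), a Fekete-type lemma with error $\eta_{m,n}$ yields $|a_n/n - \f(\beta,p)| \le C\,\eta_{\cdot,\cdot}/(\text{block size})$. To land on the exponent $-\tfrac12+\delta$, the right move is to combine this with the concentration estimate of Theorem~\ref{thm:conc}: take $n$ and tile $[0,n]$ into $\sqrt n$ blocks of length $\sqrt n$; each block contributes $\log Z_{\sqrt n}^\beta$ up to a connection-correction, and averaging $\sqrt n$ nearly-i.i.d.\ copies, each concentrated within $n^{(1/2+\delta)/2} = n^{1/4+\delta'}$ of its mean by Theorem~\ref{thm:conc}, gives fluctuations of order $n^{-1/4+\delta'}\cdot$(something) — so one actually wants the sharper route: use Theorem~\ref{thm:conc} directly at scale $n$ together with the two-sided comparison $a_n \le a_{n+m}+\text{(error)}$ and $a_{2n}\ge 2a_n - \text{(error)}$, and optimize the block length. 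Choosing block length $\asymp \sqrt n$ balances the superadditivity defect (per block $O(\log n)$, total $O(\sqrt n\log n)$, i.e.\ $O(n^{-1/2}\log n)$ after dividing by $n$) against the concentration cost, giving \eqref{eq:NF} with room to absorb logs into $n^\delta$.

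The main obstacle is the \textbf{uniformity in $\beta$ up to $\beta=\infty$}. For finite $\beta$ the factorization of $Z_{m+n}^\beta$ at the midpoint is exact and the only loss is the percolation correction; but one must ensure the constants in that correction do not blow up as $\beta\to\infty$. The safe way is to prove all estimates first for $\beta=\infty$ (pure path counting, where the GGM machinery applies directly) and for finite $\beta$ to \emph{dominate} $Z_n^\beta$ between $N_n$ and $e^{0}\cdot(\text{number of all paths})$ — more usefully, to note $N_n \le Z_n^\beta \le Z_n^0 = (2d+1)^n$ and that the relevant combinatorial/percolation inputs (positive density of good midpoints, bounded cluster geometry) are $\beta$-independent events on $\omega$. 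A second, more technical obstacle is the conditioning on $\{(0,0)\leftrightarrow\infty\}$: the block decomposition produces conditionings on several connection events simultaneously, and one needs an FKG- or finite-energy-type inequality to compare $\P(\,\cdot \mid \text{several connections})$ with the product, again with $\eps$-uniform constants — this is routine for super-critical oriented percolation but must be stated carefully. Finally, matching the limit of the \emph{conditional} mean with $\f(\beta,p)$ requires a short argument that conditioning on a positive-probability event does not change the Cesàro limit of $\E[\log Z_n^\beta]$, using the uniform bound $0 \le \log Z_n^\beta \le n\log(2d+1)$ together with $\P((0,0)\leftrightarrow\infty) \ge c(\eps) > 0$.
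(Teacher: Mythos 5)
There is a genuine gap, and it sits at the heart of the argument: approximate \emph{super}additivity alone cannot give the two-sided bound \eqref{eq:NF}. If $a_{m+n}\ge a_m+a_n-\eta$, then iterating along dyadic scales only yields $a_n/n\le \f(\beta,p)+O(\eta/n)$; it places no lower bound whatsoever on $a_n/n$ in terms of the limit, so your claim that ``a Fekete-type lemma with error $\eta$ yields $|a_n/n-\f(\beta,p)|\le C\eta/(\text{block size})$'' is false as stated. The missing ingredient is an approximate \emph{sub}additivity bound $a_{2n}\le 2a_n+O(n^{1/2+\delta})$ (the paper's Lemma~\ref{lem:almost_subadd} and, for $\beta<\infty$, Lemma~\ref{lem:almost_subadd_pos}), and this is the genuinely hard half: at finite $\beta$ the midpoint through which most of the partition-function weight passes need not be connected onward by an \emph{open} path, and the paper has to repair this by the slab-insertion surgery with energy control, exactly the mechanism of Section~\ref{sec:conc}. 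Your proposed way of handling uniformity in $\beta$ --- sandwiching $N_n\le Z_n^\beta\le(2d+1)^n$ and doing the percolation work only at $\beta=\infty$ --- cannot transfer estimates at precision $n^{-1/2+\delta}$, since that sandwich is off by a quantity of order $n$.

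Two further points on the half you did sketch. First, the ``positive density of percolating endpoints among endpoints of open paths'' input is shaky: the endpoint distribution of the path count (or polymer weight) can localize, which is precisely the difficulty the paper highlights, and in any case it is not needed. The paper instead takes the argmax endpoint $x^*$ at the junction time (losing only a factor $(2m+1)^{-d}$), uses a pigeonhole to fix a deterministic $x$ with $\P(x^*=x\mid\cdot)\ge(2m+1)^{-d}$, and then intersects this polynomially-small event with the concentration events of Theorem~\ref{thm:conc} (whose failure probability is $o(m^{-d-1})$): the intersection is non-empty, and a \emph{single} environment there suffices because the $a_n$ are deterministic; independence of the future slab replaces any FKG or finite-energy comparison. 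Second, the rate $n^{-1/2+\delta}$ comes directly from applying the concentration bound at scale $n$ inside this argument and then doubling dyadically --- not from balancing $O(\log n)$ defects over $\sqrt n$-blocks --- so the error accounting in your second paragraph would not produce \eqref{eq:NF} even for the superadditive half.
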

It is important that we have these results uniformly in $\beta\in[0,\infty]$ and $p$ away from criticality. We need the conditioning in Theorem~\ref{thm:conc} not only at $\beta=\infty$, where its necessity is obvious, but also for $\beta<\infty$ to make the bound uniform. Indeed, since $\log Z_n^\beta \ge 0$ on $\{(0,0)\leftrightarrow \infty\}$ and $\log Z_n^\beta \le -\beta + n\log (2d+1)$ on $\{(0,0)\not\leftrightarrow \infty\}$, a concentration around a deterministic value like~\eqref{eq:conc} cannot hold for $\beta\ge 2n\log (2d+1)$. 

Thanks to the above uniform bounds, we can easily get the following continuity results. 
\begin{theorem}\label{thm:main}
For every $p>\pcr$,
\begin{align}
\lim_{\beta\to\infty} \f(\beta,p)&=\f(\infty,p)\label{eq:zero_temperature_limit},\\
\lim_{q\to p}\f(\infty,q)&=\f(\infty,p).\label{eq:cont_in_p}
\end{align}
\end{theorem}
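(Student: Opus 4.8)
The plan is to derive both limits in Theorem~\ref{thm:main} directly from the uniform convergence rates of Theorems~\ref{thm:conc} and~\ref{thm:NRF}, treating $\beta=\infty$ on the same footing as finite $\beta$. First I would combine the two theorems: for fixed $p>\pcr$ and any $\delta,\eps>0$ with $p>\pcr+\eps$, an application of Borel--Cantelli to~\eqref{eq:conc} (valid since $n^{-r}$ is summable for $r>1$) shows that $\P(\cdot\mid(0,0)\leftrightarrow\infty)$-almost surely $\frac1n\log Z_n^\beta - \frac1n\E[\log Z_n^\beta\mid(0,0)\leftrightarrow\infty]\to 0$ along the subsequence, and then~\eqref{eq:NF} identifies the limit of the conditional mean as $\f(\beta,p)$. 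But the key point is the \emph{non-asymptotic} inequality one extracts: taking expectations in~\eqref{eq:conc} (or simply using~\eqref{eq:NF} together with the trivial bound $0\le\frac1n\log Z_n^\beta\le\log(2d+1)$ on the conditioning event to control the deviation integral), one obtains a bound of the form
\begin{equation}
\label{eq:uniform_bound}
\left|\frac1n\E\left[\log Z_n^\beta\mid(0,0)\leftrightarrow\infty\right]-\f(\beta,p)\right|\le c\,n^{-\frac12+\delta}
\end{equation}
that holds \emph{uniformly} in $\beta\in[0,\infty]$ and in $p>\pcr+\eps$, with $c$ depending only on $\delta$ and $\eps$. This uniformity is the whole engine of the argument.

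For~\eqref{eq:zero_temperature_limit}, I would fix $n$ and use that for each fixed $n$ the map $\beta\mapsto\log Z_n^\beta(\omega)$ is continuous on $[0,\infty]$ (monotone decreasing, with $Z_n^\infty=N_n$, as noted in the excerpt), and bounded on the conditioning event; hence $\beta\mapsto\frac1n\E[\log Z_n^\beta\mid(0,0)\leftrightarrow\infty]$ is continuous at $\beta=\infty$ by dominated convergence. Now write, for any $\beta<\infty$,
\begin{align*}
|\f(\beta,p)-\f(\infty,p)|
&\le\left|\f(\beta,p)-\tfrac1n\E[\log Z_n^\beta\mid\cdot]\right|
+\left|\tfrac1n\E[\log Z_n^\beta\mid\cdot]-\tfrac1n\E[\log Z_n^\infty\mid\cdot]\right|\\
&\quad+\left|\tfrac1n\E[\log Z_n^\infty\mid\cdot]-\f(\infty,p)\right|,
\end{align*}
where the conditioning is on $(0,0)\leftrightarrow\infty$. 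Given $\eta>0$, choose $n$ large so that by~\eqref{eq:uniform_bound} the first and third terms are each at most $\eta/3$ \emph{simultaneously for all $\beta$} (this is where uniformity in $\beta$ is essential — $n$ must not depend on $\beta$); then choose $\beta$ large so that the middle term is at most $\eta/3$ by the fixed-$n$ continuity just described. This proves $\limsup_{\beta\to\infty}|\f(\beta,p)-\f(\infty,p)|\le\eta$ for every $\eta>0$.

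For~\eqref{eq:cont_in_p} the structure is identical, replacing $\beta$ by a variable percolation parameter $q$ and working at $\beta=\infty$ throughout. Fix $p>\pcr$, pick $\eps>0$ with $p>\pcr+2\eps$, and restrict attention to $q\in(\pcr+\eps,1]$, which contains a neighborhood of $p$. By~\eqref{eq:uniform_bound} at $\beta=\infty$, both $|\f(\infty,q)-\frac1n\E_q[\log N_n\mid(0,0)\leftrightarrow\infty]|$ and the corresponding quantity at $p$ are at most $\eta/3$ for a suitably large fixed $n$, uniformly in such $q$. It then remains to show $q\mapsto\frac1n\E_q[\log N_n\mid(0,0)\leftrightarrow\infty]$ is continuous at $q=p$ for fixed $n$; since $N_n$ is determined by the finitely many coordinates $(\omega(t,x))_{1\le t\le n,\,|x|_1\le n}$, both $\E_q[\log N_n;(0,0)\leftrightarrow(n,\Z^d)]$ and $\Pq((0,0)\leftrightarrow\infty)$ are, for the event restricted to a finite box, polynomial (hence continuous) in $q$ — and $\Pq((0,0)\leftrightarrow\infty)$ is known to be continuous in $q$ on $(\pcr,1]$ in the supercritical phase, which handles the conditioning denominator. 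Combining these with the same three-term split as above gives the claim.

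The main obstacle is not any single estimate but making sure the quantifiers are ordered correctly: the constant $c_{\ref{thm:conc}}$ and hence $c$ in~\eqref{eq:uniform_bound} must be shown to depend only on $(\delta,\eps)$ and not on $(\beta,p)$ — which is exactly the content of Theorems~\ref{thm:conc} and~\ref{thm:NRF} as stated, so this is granted — and then the choice of $n$ in the three-term decomposition must be made \emph{before} letting $\beta\to\infty$ or $q\to p$. A minor technical point to dispatch carefully is the passage from the tail bound~\eqref{eq:conc} to the expectation bound~\eqref{eq:uniform_bound}: on $\{(0,0)\leftrightarrow\infty\}$ one has $0\le\log Z_n^\beta\le n\log(2d+1)$, so the contribution of the event in~\eqref{eq:conc} to $|\frac1n\E[\log Z_n^\beta\mid\cdot]-\frac1n\E[\dots]|$ is at most $\log(2d+1)\cdot c_{\ref{thm:conc}}n^{-r}$, which is negligible compared to $n^{-\frac12+\delta}$; alternatively one bypasses~\eqref{eq:conc} entirely and uses only~\eqref{eq:NF} plus the trivial two-sided bound on $\frac1n\log Z_n^\beta$. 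Either route is routine.
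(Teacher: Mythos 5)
Your treatment of \eqref{eq:zero_temperature_limit} is correct and is essentially the paper's own argument: Theorem~\ref{thm:NRF} is applied uniformly in $\beta\in[0,\infty]$ to fix $n$ first, and the fixed-$n$ continuity of $\beta\mapsto\frac1n\E[\log Z_n^\beta\mid(0,0)\leftrightarrow\infty]$ at $\beta=\infty$ (dominated convergence, using $0\le\log Z_n^\beta\le n\log(2d+1)$ on the conditioning event) closes the three-term triangle inequality; as you note yourself, \eqref{eq:conc} is not needed, since \eqref{eq:NF} already is your bound.

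For \eqref{eq:cont_in_p}, however, there is a gap in the fixed-$n$ continuity step. The quantity that must be continuous in $q$ is $\Eq[\log N_n\mid(0,0)\leftrightarrow\infty]=\Eq[\log N_n;(0,0)\leftrightarrow\infty]/\Pq((0,0)\leftrightarrow\infty)$. You establish continuity of $\Eq[\log N_n;(0,0)\leftrightarrow(n,\Z^d)]$ (a finite-box, hence polynomial, function of $q$) and invoke the known continuity of the denominator, but the first quantity is not the numerator you need: the indicator $\1_{\{(0,0)\leftrightarrow\infty\}}$ depends on infinitely many coordinates, so the finite-box polynomial argument does not apply to $\Eq[\log N_n;(0,0)\leftrightarrow\infty]$, and continuity of $q\mapsto\Pq((0,0)\leftrightarrow\infty)$ by itself only handles the denominator. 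The missing step is to pass from the infinite-volume event to a finite box uniformly in $q$; this is fixable with tools already at hand. For $m\ge n$ write $\Eq[\log N_n;(0,0)\leftrightarrow\infty]=\Eq[\log N_n;(0,0)\leftrightarrow(m,\Z^d)]-\Eq[\log N_n;(0,0)\leftrightarrow(m,\Z^d),(0,0)\nleftrightarrow\infty]$; by Theorem~\ref{thm:largefinite} and $0\le\log N_n\le n\log(2d+1)$ the second term is at most $n\log(2d+1)\,e^{-c_{\ref{thm:largefinite}}m}$ uniformly in $q>\pcr+\eps$, so the numerator is a uniform limit of polynomials in $q$ and hence continuous (the same device also yields continuity of $\Pq((0,0)\leftrightarrow\infty)$ on $(\pcr+\eps,1]$, so the external citation becomes optional). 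With this repair your route goes through and differs mildly from the paper's, which instead couples all parameters at once via uniform random variables $U(t,x)$, exploits monotonicity of $Z_n^\infty$ and of the percolation event in the coupled environments, and quotes the continuity of the percolation probability from~\cite{GH02}; your (repaired) finite-box approximation is arguably more self-contained, while the paper's coupling avoids any explicit truncation of the event $\{(0,0)\leftrightarrow\infty\}$.
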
 

\begin{proof}[Proof of Theorem \ref{thm:main} assuming Theorem~\ref{thm:NRF}]
For any $\epsilon>0$ and $p_0\in (\pcr,p)$, we can choose $n\in\N$ such that 
\begin{equation}
  \label{eq:uniformpbeta}
\sup_{(p,\beta)\in [p_0,1]\times [0,\infty]}\left|\frac1n \E[\log Z_n^\beta\mid (0,0)\leftrightarrow\infty]-\f(\beta,p)\right| \le \epsilon
\end{equation}
by Theorem~\ref{thm:NRF}. 

Let us prove~\eqref{eq:zero_temperature_limit} first. Since $n$ is fixed, we can choose $\beta>0$ so large that 
\begin{equation}
\left|\frac1n \E[\log Z_n^\beta\mid (0,0)\leftrightarrow\infty]-\frac1n \E[\log Z_n^\infty\mid (0,0)\leftrightarrow\infty]\right|\le \epsilon.
\end{equation}
Then it follows that
\begin{equation}
\begin{split}
 |\f(\beta,p)-\f(\infty,p)| 
&\le  \left|\frac1n \E[\log Z_n^\beta\mid (0,0)\leftrightarrow\infty]-\f(\beta,p)\right|\\
&\quad +\left|\frac1n \E[\log Z_n^\beta\mid (0,0)\leftrightarrow\infty]-\frac1n \E[\log Z_n^\infty\mid (0,0)\leftrightarrow\infty]\right|\\
&\quad +\left|\frac1n \E[\log Z_n^\infty\mid (0,0)\leftrightarrow\infty]-\f(\infty,p)\right|\\
&\le  3\epsilon.
\end{split}
\end{equation}

To prove \eqref{eq:cont_in_p}, we use independent $\text{Unif}[0,1]$ random variables $((U(t,x))_{(t,x)\in\Z_+\times\Z^d},\mathbb{P})$ and define $\omega_p(t,x)=1_{\{U(t,x) \le p\}}$. 
Noting the fact that 
\begin{align*}
  0\le \frac{1}{n}\log Z_n^\infty \le \frac{1}{n}\log Z_n^0 =\log(2d+1)\text{ on }\{(0,0)\leftrightarrow\infty\},
\end{align*}
we have for $q>p \geq p_0$, 
\begin{align*}
&\left|\frac1n \E[\log Z_n^\infty, (0,0)\leftrightarrow\infty]-\frac1n \Eq[\log Z_n^\infty, (0,0)\leftrightarrow\infty]\right|\\
&\quad \le \mathbb{E}\left[\frac{1}{n} \log Z_n^\infty(\omega_q) 1_{\{(0,0)\leftrightarrow\infty\}}(\omega_q)-\frac{1}{n} \log Z_n^\infty(\omega_p)1_{\{(0,0)\leftrightarrow\infty\}}(\omega_p)\right]\\
&\quad \le \mathbb{E}\left[\left(\frac{1}{n} \log Z_n^\infty(\omega_q)-\frac{1}{n} \log Z_n^\infty(\omega_p)\right)1_{\{(0,0)\leftrightarrow\infty\}}(\omega_p)\right]\\
&\qquad +\log(2d+1)\left(\Pq((0,0)\leftrightarrow\infty)-\P((0,0)\leftrightarrow\infty)\right).
\end{align*}
Since $\lim_{|p-q|\to 0}\mathbb{P}(\bigcap_{0\le t\le n, |x|_1\le n}\{\omega_p(t,x)=\omega_q(t,x)\})=1$, the first term on the right-hand side converges to zero as $|p-q|\to 0$. The second term also converges to zero as $|p-q|\to 0$, thanks to the continuity of the percolation probability proved in~\cite[Theorem~2]{GH02}. Using the continuity once more, we conclude that  
\begin{align*}
  \lim_{|p-q|\to 0}\left|\frac1n \E[\log Z_n^\infty\mid (0,0)\leftrightarrow\infty]-\frac1n \Eq[\log Z_n^\infty\mid (0,0)\leftrightarrow\infty]\right|=0.
\end{align*}
From this and~\eqref{eq:uniformpbeta}, we obtain~\eqref{eq:cont_in_p}. 
\end{proof}
We close this section with a few remarks.
\begin{remark}
The results in this section are formulated under the conditioning on $\{(0,0)\leftrightarrow \infty\}$. This is for the ease of notation and also to make the correspondence to Theorem~\ref{thm:ggm} clear. However, as is usual for percolation models, this is essentially the same as conditionning on $\{(0,0)\leftrightarrow (n,\Z^d)\}$. See Theorem~\ref{thm:largefinite}. 
\end{remark}
\begin{remark}
Although we use the result of~\cite{GGM17} to ensure the existence of $\f(\infty,p)$, it can also be deduced from our argument. See Remark~\ref{rem:existence}. 
\end{remark}
\begin{remark}
There is a recent work~\cite{DCKNPS} which shows that the number of maximal paths in directed last-passage percolation grows exponentially even in the sub-critical phase. There is an interesting fact revealed by their argument: the growth rate $\tilde{\alpha}(p)$ does not tends to 0 as $p\searrow \pcr$. One might wonder if the almost sure growth rate exists in the sub-critical phase, but it seems to be a hard problem. Both of the arguments in~\cite{GGM17} and in this paper rely on the sub-additivity of the number of open paths that fails to hold for maximal paths. 
\end{remark}

\section{Technical difficulty and new idea}
\label{sec:difficulty}
We think it is appropriate to explain the technical difficulty in the proof since the rates of convergence results like Theorems~\ref{thm:conc} and~\ref{thm:NRF} are standard for the directed polymer models in positive temperature, i.e., $\beta<\infty$. The concentration around the mean for the directed polymer with $\beta<\infty$ is proved in~\cite[Theorem~1.4]{LW09} under a mild assumption on the random potential $\omega$. Once the concentration around the mean is established, the non-random fluctuation bound follows rather easily by using the argument in~\cite{Z10}. At zero temperature, it is less standard but still there are some recent progress~\cite{N18,FJ19}. 

We emphasize that in all these works, the concentration around the mean is proved by using appropriate variants of the so-called \emph{bounded difference inequality}. This requires to control the influence caused by changing the environment at one time slice. It is in the control of the influence where we have a difficulty in our model. 

More precisely, when we change the environment at one time slice, it is possible that most of the open paths get disconnected. In this situation each time-slice would have a large influence, so we have to exclude this possibility to get a useful concentration. This behavior might look very unlikely since the percolation cluster grows like a cone and is thus supported on a large number of sites. However, as is proved in~\cite{Yos10}, the spatial distribution of the open paths can localize and then it is possible to disconnect many open paths by closing relatively small number of sites. Just to give an intuition, we recall that for the directed polymer in the Gaussian environment, it is proved in~\cite[Theorem~6.1]{Com17} that the distribution of the paths tends to concentrate in a small neighborhood of a ``favorite path'' as $\beta\to\infty$. 

There was a similar problem in~\cite{FJ19} where we established the rate of convergence results for the Brownian directed polymer in Poissonian disasters. We circumvented it by using a variant of the bounded difference inequality~\cite[Theorem~15.5]{BLM13} which can also be viewed as an extension of the Efron--Stein inequality. The advantage of the variant is that, roughly speaking, it only requires a moment bound for the influence, instead of the $L^\infty$-bound. We managed to get such a bound by constructing a repairing path as shown in Figure~\ref{fig:1}(i). More precisely, we showed that the defect caused by resampling a time-slice can be fixed using a repairing path of bounded length. 
This part of the argument in~\cite{FJ19} crucially uses the fact that the Brownian motion can move an arbitrarily large distance in a unit time. 

In the current nearest-neighbor setting, it gets difficult to find a short repairing path as above. We have essentially no information about the spatial distribution of the open paths and in particular, we cannot preclude the following possibility (see Figure~\ref{fig:1}(ii)):
\begin{itemize}
 \item the open paths are concentrated around a favorite path as in~\cite[Theorem~6.1]{Com17} and
 \item the favorite path locally follows an edge of the percolation cone. 
\end{itemize} 
\begin{figure}[b]
\includegraphics[width=.49\textwidth]{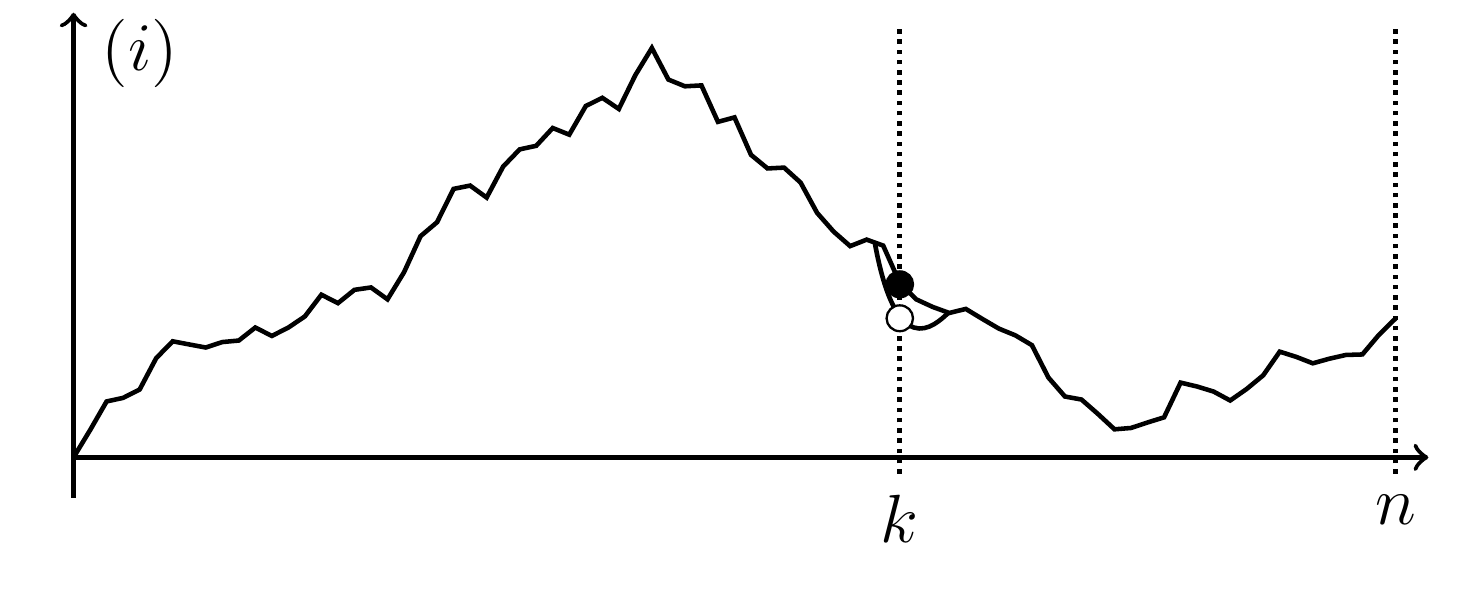}
\includegraphics[width=.49\textwidth]{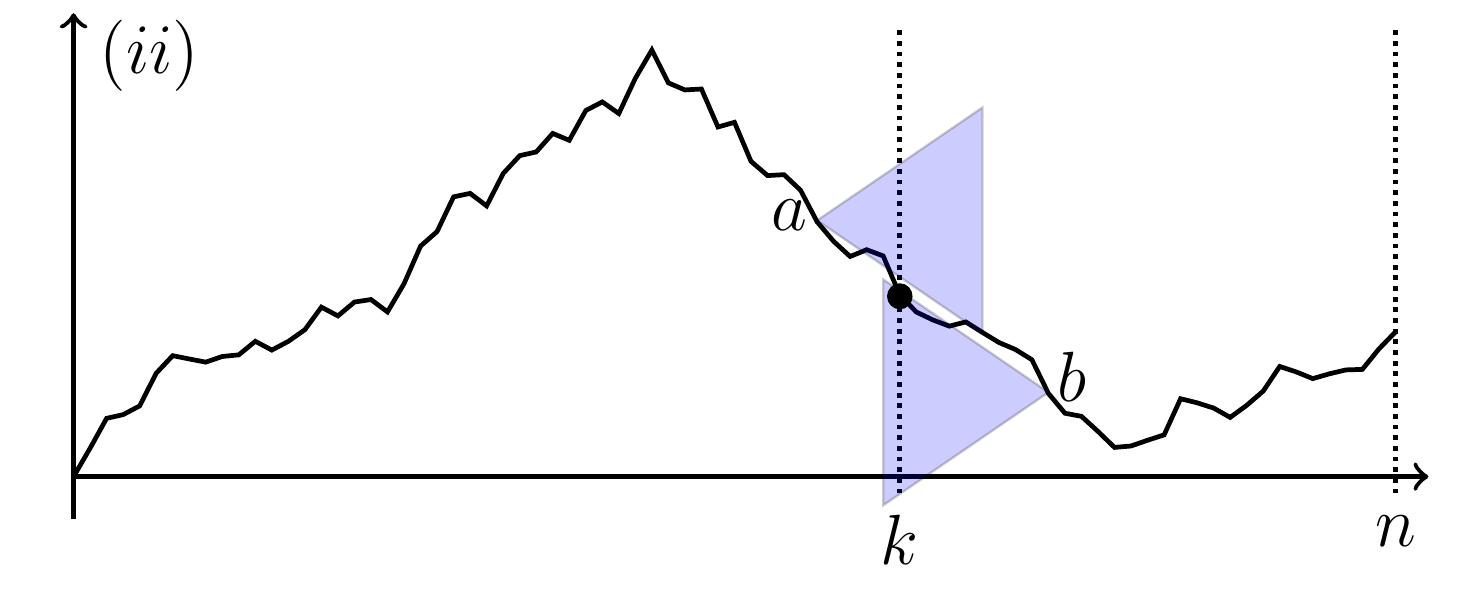}
\includegraphics[width=.49\textwidth]{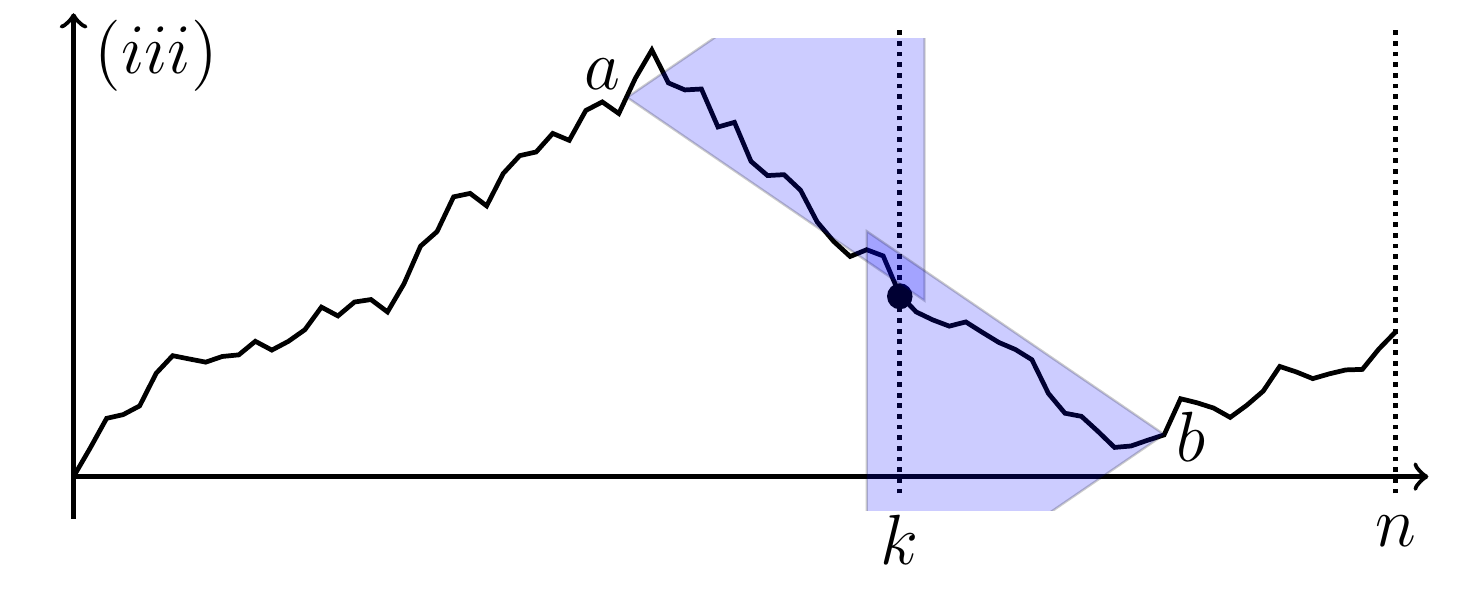}
\includegraphics[width=.49\textwidth]{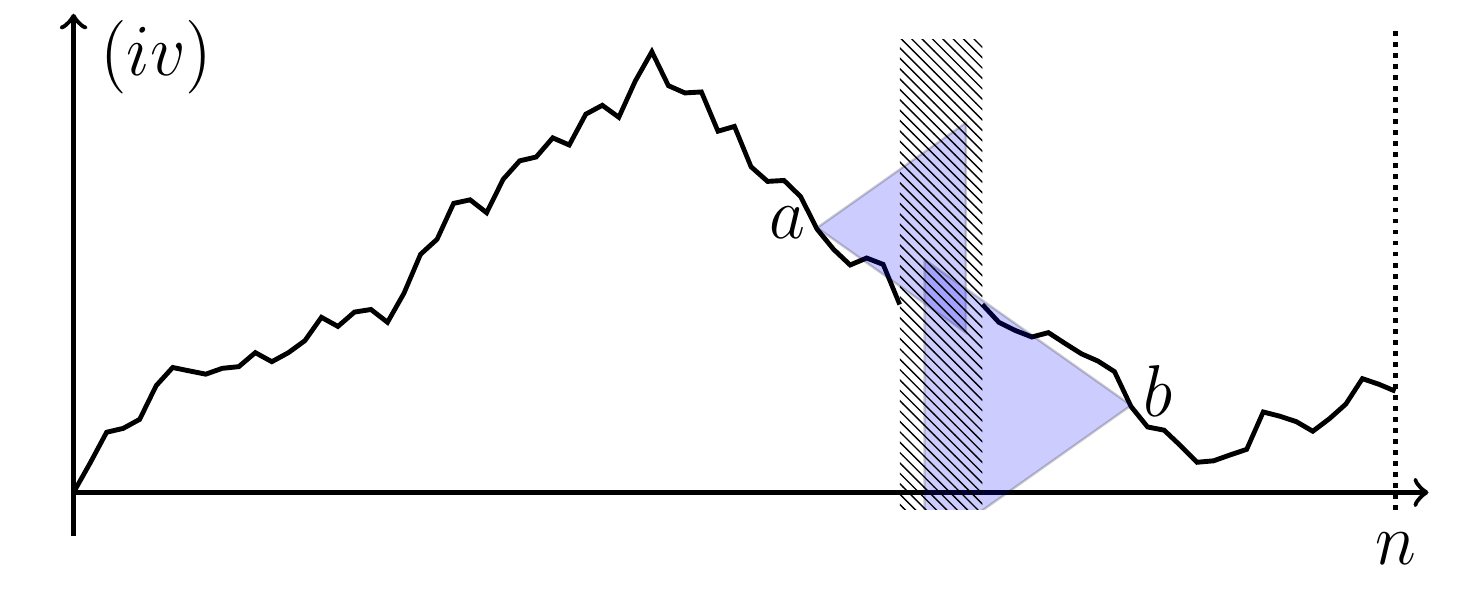}
\caption{Top left: The repairing path in the case of Brownian directed polymer. After moving a constant distance away from the defect (filled circle), one can show that switching to the repairing path (empty circle) does not cost too much. Top right: The bad situation for the nearest-neighbor model. The path has a large slope and the percolation cones (shaded) from two points in the future and past may miss each other. Bottom left: In the bad situation, we can still find a repairing path by considering percolation points far enough in the past and future. But then the influence to the number of paths becomes large. Bottom right: Our new idea is to insert an independent slab (striped) at time $k$, which gives the percolation cones more time to meet.} 
\label{fig:1}
\end{figure}
Suppose that the sites around the 
favorite path is closed after resampling the environment at time $k$ and that it has an extremal slope around time $k$. The shape theorem for oriented percolation shows that there are no open paths with greater slope. We can still find an open path that ``reconnects'' the favorite path in the resampled environment by moving far away from time $k$, but this repairing path will be very long. In this situation, many paths in the original environment would be mapped to the same repairing path in the new environment, and this makes the bounded difference inequality inefficient.

To explain the new idea, let us first recall that the bounded difference inequality is a special case of the Azuma--Hoeffding inequality: the latter requires a bound on the martingale difference
\begin{equation*}
\Delta M_k=\E[ \1_{\{(0,0)\leftrightarrow (n,\Z^d)\}}\log N_n \mid \F_k]-\E[ \1_{\{(0,0)\leftrightarrow (n,\Z^d)\}}\log N_n \mid \F_{k-1}].
\end{equation*}
and the former suggests to get such a bound by considering a special coupling between the two random variables that uses the same environment except at time $k$. Our new idea is to use a different coupling.

By independence, the above conditional expectations are simply an integration of the environment after time $k+1$, resp.\ after time $k$. Because of this, it is enough to bound the difference between $\log N_n(\omega_1)$ and $\log N_n(\omega_2)$, where $\omega_2$ is obtained by inserting an extra ``slab'' with an independent environment into $\omega_1$ at time $k$, see Figure~\ref{fig:1}(iv). At first glance, this does not seem to be an improvement because an $\omega_1$-path can still be closed in $\omega_2$, but the additional slab is very helpful to construct a short repairing path. Indeed, with high probability we can find forward and backward percolation points on the path before and after---but not too far away from---time $k$. Thanks to the inserted slab, the forward and backward percolation cones starting from those points meet with high probability, which results in an open path connecting those two points. The crucial point is that the slab allows additional time for the percolation cones to meet, even if the original path has very high slope, whereas the cones might miss each other in the standard coupling used in the bounded difference inequality. We make this argument rigorous by using some ideas from~\cite{GGM17}. 

We stress that to obtain a uniform concentration inequality, it is necessary to use the above construction even in the case $\beta<\infty$ because the influence bounds obtained by standard methods diverge for $\beta\to\infty$. In fact, this introduces an additional complication since we need to find repairing paths even if the original path is not open and potentially visits regions with few open sites.

Finally, let us make a comment on the concentration inequality we will use to prove Theorem~\ref{thm:conc}. As mentioned above, we can find a repairing path only with high probability. Thus the Azuma--Hoeffding inequality in not suited for our purpose. In the previous work~\cite{FJ19}, we used the variant~\cite[Theorem~15.5]{BLM13}, but it uses the same coupling as the bounded difference inequality and hence does not allow us to insert the slab. For these reasons, we ended up using Burkholder's classical inequality in~\cite{B66} for the moments of martingales.

\section{Toolbox}
\label{sec:toolbox}
In order to find the repairing path mentioned in Section~\ref{sec:difficulty}, we need some results on the oriented percolation. In this section, we collect known results and introduce some definitions. In the following, $\ell_n$ denotes a small auxiliary time-scale 
\begin{align}\label{eq:ell}
\ell_n:=\lfloor (\log n)^2 \rfloor.
\end{align}
To keep the notation compact, we will often denote sets of vertices by $A\subseteq\R^d$ and time intervals by $I\subseteq\R_+$ with the understanding that we refer to $A\cap\Z^d$ and $I\cap\Z_+$.

\subsection{Known results}
\label{sec:known}
We recall some results from the literature. Though all the results hold uniformly in $p>\pcr+\epsilon$, it is usually not made precise and sometimes not entirely obvious from the proofs. In the appendix, we give references and some comments on how to ensure that constants are uniform in $p>\pcr+\epsilon$. First, long finite clusters are unlikely.
\begin{theorem}
\label{thm:largefinite}
For every $\eps>0$ there exists $c_{\ref{thm:largefinite}}>0$ such that for all $p> \pcr+\eps$ and all $n\in\N$,
\begin{align*}
\P\left((0,0)\leftrightarrow(n,\Z^d),(0,0)\nleftrightarrow\infty\right)\leq e^{-c_{\ref{thm:largefinite}}n}.
\end{align*}
\end{theorem}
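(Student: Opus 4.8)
The plan is to deduce this from the exponential decay of the radius of a \emph{finite} open cluster in supercritical oriented percolation, which in turn rests on a block (renormalization) argument. First I would unravel the event. Let $C$ denote the forward open cluster of $(0,0)$, that is, the set of all endpoints $\pi(t)$ of $\omega$-open paths $\pi\colon\{0,\dots,t\}\to\Z^d$ with $\pi(0)=0$ and $t\ge 0$ (so that $(0,0)\in C$ regardless of $\omega(0,0)$). Then $\{(0,0)\leftrightarrow(n,\Z^d)\}$ is the event that $C$ contains a vertex at time-level $n$, and $\{(0,0)\nleftrightarrow\infty\}$ is the event that $C$ is finite. Hence on the intersection $C$ is finite yet reaches level $n$, and it suffices to prove $\P(C\text{ is finite and reaches level }n)\le e^{-cn}$.

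For this I would invoke the block construction for oriented percolation (the restart argument of \cite{BG91}, in the quantitative directed form of \cite{GH02}). Fix $\eps>0$. For each $\delta>0$ there is a block length $L=L(\eps,\delta)$ such that, for every $p>\pcr+\eps$, the microscopic percolation can be coupled with a supercritical oriented site percolation on a coarse space-time lattice of parameter $>1-\delta$ in such a way that the microscopic cluster propagates forward through every ``good'' coarse block and good blocks percolate; in particular the origin's coarse good cluster is contained in (the coarse projection of) $C$. Consequently, if $C$ is finite then the origin's coarse good cluster is finite, and if $C$ additionally reaches level $n$ then (after the bookkeeping that relates microscopic survival to macroscopic survival) the origin's coarse good cluster is blocked, within the near-saturated coarse percolation, by a set of order $n/L$ bad coarse sites. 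A Peierls-type count then bounds the probability of such a blocking set by $e^{-c'n/L}$ for $\delta$ small enough in terms of $d$, which gives the claim; related estimates also appear in \cite{GGM17}.

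The constant is uniform over $p>\pcr+\eps$ because $L$ and the saturation level $1-\delta$ can be chosen depending only on $\eps$: the coarse density is nondecreasing in $p$, and at $p=\pcr+\eps$ it exceeds any prescribed $1-\delta$ once $L$ is large; the precise argument is deferred to the appendix. The main obstacle I anticipate is exactly this uniformity, together with making rigorous the step that a finite-but-tall microscopic cluster forces a macroscopic blocking surface in the renormalized percolation — the rest is routine.
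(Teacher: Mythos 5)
Your overall route (Bezuidenhout--Grimmett block renormalization, comparison with a near-saturated coarse oriented percolation, a counting argument, uniformity in $p$ via monotonicity of the block density) is the same one the paper relies on: the paper does not reprove the theorem but cites \cite{D84} for $d=1$ and condition (b) of the (5.2) Theorem in \cite{D91} (built on \cite{BG90,BG91}) for $d\geq 2$, and your monotonicity remark matches the uniformity discussion in the appendix. However, your central combinatorial step fails as stated. The block construction does not give that a finite-but-tall microscopic cluster forces a blocking set of order $n/L$ bad coarse sites. What the comparison gives is: if the microscopic cluster is finite, then from any coarse block that it \emph{seeds}, the directed cluster of good blocks is finite, hence separated from infinity by a cut of bad blocks. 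But in oriented percolation a single coarse site can be cut off from infinity by a \emph{bounded} number of bad blocks (for instance its $2d+1$ forward neighbours), and the height $n$ of the microscopic cluster does not transfer to the coarse lattice: once the coarse good cluster is blocked, the microscopic cluster can keep growing through bad blocks, since a bad block only means that a certain high-probability crossing event fails, not that the region is microscopically closed. So your Peierls count yields a bound of constant order (roughly $(C\delta)^{2d+1}$), not $e^{-cn}$; the step you deferred as ``bookkeeping'' is precisely where the deterministic implication breaks down.

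The standard repair---and the one behind the references the paper cites---is probabilistic rather than geometric: a restart argument. Given that the cluster is alive, it produces a seed (an occupied copy of $[-J,J]^d$) within a time of geometric tail; from a seed one launches the coarse block process, which either survives forever (so the microscopic cluster is infinite) or dies at a coarse height with exponential tail, after which one searches for a new seed, each attempt failing with probability at most some $q<1$ uniformly in the past. On the event $\{(0,0)\leftrightarrow(n,\Z^d),\,(0,0)\nleftrightarrow\infty\}$ there must then be either of order $n/L$ failed attempts or atypically long attempts, and both possibilities have probability $e^{-cn}$. This iterative structure is exactly what the paper's appendix addresses when it controls, uniformly in $p>\pcr+\eps$, the waiting time for an occupied copy of $I=[-J,J]^d$ by a geometric random variable whose parameter is monotone in $p$. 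If you rewrite your sketch around this restart structure (and make the seeding of the origin block explicit), the rest of your plan, including the uniformity argument, goes through.
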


In the proofs below, we will frequently encounter events where, say, $(0,0)\leftrightarrow(n,\Z^d)$ and $(0,\Z^d)\leftrightarrow (n,0)$, from which we want to conclude that $(0,0)\leftrightarrow(n,0)$. In dimension $d=1$ it is not too hard to make that conclusion rigorous using that the left- and rightmost paths in oriented percolation have an asymptotic slope \cite[Section 3]{D84} and a natural path-crossing argument. 

In higher dimensions, the same argument does not work because the forward percolation cluster from $(0,0)$ and the backward percolation cluster from $(n,0)$ could, in principle, miss each other. We therefore recall the so-called coupled zone, which is a subset of the percolation cluster where this kind a behavior does not occur. We need that it grows in a linear speed. 

\begin{definition}\label{def:coupled}
Let $v>0$ be a small number to be chosen in Section~\ref{sec:5.3} and set, for non-negative integers $k$ and $n$,
\begin{align}\label{eq:def_coupled}
\C^{n,x}_k&:=\left\{\omega:
\begin{array}{l}
(n,x)\leftrightarrow (n+k,\Z^d) \text{ and }\text{for every }y\in x+[-kv,kv]^d,\\
\text{either }(n,\Z^d)\nleftrightarrow (n+k,y)\text{ or }(n,x)\leftrightarrow(n+k,y)
\end{array}\right\},\\
\accentset\leftarrow\C^{n,x}_{k}&:=\left\{\omega:
\begin{array}{l}
(n-k,\Z^d)\leftrightarrow (n,x) \text{ and }\text{for every }y\in x+[-kv, kv]^d,\\
\text{either }(n-k,y)\nleftrightarrow (n,\Z^d)\text{ or }(n-k,y)\leftrightarrow(n,x)
\end{array}\right\},\label{eq:def_coupled_rev}
\end{align}
with the convention $C_k:=C^{0,0}_k$.
\end{definition}

\begin{remark}
  The $\leftarrow$ in~\eqref{eq:def_coupled_rev} indicates that it is an event concerning the backward percolation. We stress that even when we consider the backward percolation, we keep the convention on the open path: the site with the smallest time index may be closed while the one with the largest time index has to be open. 
\end{remark}

\begin{theorem}\label{thm:coupled}
For any $\epsilon>0$, there exists $c_{\ref{thm:coupled}}>0$ such that for all $p>\pcr+\eps$ and all $k\geq 0$,
\begin{align*}
\P\left(\{(0,0)\leftrightarrow(k,\Z^d)\} \setminus C_k\right)\leq e^{-c_{\ref{thm:coupled}} k}.
\end{align*}
\end{theorem}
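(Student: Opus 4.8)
The plan is to reduce the claim to a statement about the backward percolation cluster from a single point and then invoke the linear growth of the coupled zone, which is the standard consequence of the subadditive ergodic theorem combined with a restart argument. More precisely, the event $\{(0,0)\leftrightarrow(k,\Z^d)\}\setminus C_k$ means that the forward cluster from $(0,0)$ survives up to time $k$, but there is some $y\in[-kv,kv]^d$ with $(0,\Z^d)\leftrightarrow(k,y)$ and yet $(0,0)\nleftrightarrow(k,y)$. I would first dispose of the contribution of $y$'s that are reached only through a long finite cluster: by Theorem~\ref{thm:largefinite} (applied to the backward percolation, which has the same law), the probability that $(0,\Z^d)\leftrightarrow(k,y)$ but $(-\infty,\Z^d)\centernot\leftrightarrow(k,y)$ — i.e.\ $(k,y)$ is the top of a finite backward cluster of length $k$ — is at most $e^{-c_{\ref{thm:largefinite}}k}$, and summing over the $O((kv)^d)$ relevant $y$ still gives exponential decay. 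So it suffices to bound the probability that there exists $y\in[-kv,kv]^d$ which lies in the infinite backward cluster at time $k$ but is not forward-connected from $(0,0)$.

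For that remaining event I would use the coupling/coupled-zone machinery from~\cite{Dur84,GGM17}: conditionally on percolation, the set of sites at time $k$ that belong to the infinite forward cluster from the origin and the set of sites at time $k$ that belong to the infinite backward cluster agree on a box of radius $\rho k$ for some deterministic $\rho>0$, outside an event of exponentially small probability. This is exactly the linear growth of the coupled zone; with $v$ chosen smaller than this $\rho$ in Section~\ref{sec:5.3}, the box $[-kv,kv]^d$ is contained in the coupled zone with probability $1-e^{-ck}$. On that event, every $y\in[-kv,kv]^d$ which is in the infinite backward cluster is also in the forward cluster from $(0,0)$ (here one also uses that on $\{(0,0)\leftrightarrow(k,\Z^d)\}$ we may, up to another $e^{-ck}$ error via Theorem~\ref{thm:largefinite}, assume $(0,0)\leftrightarrow\infty$, so the forward cluster from $(0,0)$ is the infinite one), and hence $(0,0)\leftrightarrow(k,y)$, contradicting membership in $\{(0,0)\leftrightarrow(k,\Z^d)\}\setminus C_k$. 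Combining the three exponential bounds and adjusting constants yields $\P(\{(0,0)\leftrightarrow(k,\Z^d)\}\setminus C_k)\le e^{-c_{\ref{thm:coupled}}k}$.

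The main obstacle is making the linear growth of the coupled zone quantitative and, above all, \emph{uniform in $p>\pcr+\eps$}: the classical statements in~\cite{Dur84} are for fixed $p$ and the uniformity is, as the authors themselves note, "not made precise and sometimes not entirely obvious from the proofs." I would handle this exactly as flagged in the paper — by deferring the uniformity bookkeeping to the appendix — relying on the fact that the restart/renormalization estimates behind the coupled zone depend on $p$ only through the exponential-decay rates in Theorem~\ref{thm:largefinite} and the analogous finite-cluster estimates, which are themselves uniform on $(\pcr+\eps,1]$. A secondary, more routine point is the union bound over $y\in[-kv,kv]^d$: since each individual probability is $e^{-c k}$ with $c$ independent of $y$ and the number of terms is only polynomial in $k$, this costs nothing after shrinking the constant, but one must make sure the per-$y$ estimate really is $y$-uniform, which it is because translating replaces $(k,y)$ by $(k,0)$ without changing any law.
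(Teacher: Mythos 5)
Your proposal is essentially the paper's own treatment: Theorem~\ref{thm:coupled} is listed among the known results, and its ``proof'' consists of citing the coupled-zone large-deviation estimate from the literature (the appendix points to \cite[eq.~(43)]{GG12} for the contact process in random environment, with the adaptation to discrete time deemed routine and the uniformity in $p>\pcr+\eps$ checked through the constants of the renormalization construction), which is exactly the black box your argument rests on. Your extra detour through infinite backward clusters and Theorem~\ref{thm:largefinite} is harmless but unnecessary, since the cited estimate is already stated for the process started from $(0,\Z^d)$, i.e.\ directly in the form of the event $C_k$.
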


Finally, we need an estimate on the percolation probability as a function of the initial set: 
\begin{theorem}\label{thm:large_initial}
For every $\eps>0$, there exists $c_{\ref{thm:large_initial}}$ such that for all $p>\pcr+\eps$ and all finite $A\subset\Z^d$,
\begin{align}\label{eq:large_initial}
\P\left((0,A)\nleftrightarrow\infty\right)\leq e^{-c_{\ref{thm:large_initial}}|A|}.
\end{align}
\end{theorem}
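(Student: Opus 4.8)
\emph{The plan.} I would prove this via the block renormalization of super-critical oriented percolation, which reduces the bound to a Peierls-type count on a scale where the percolation is as super-critical as one wishes. Uniformity in $p>\pcr+\eps$ comes, as always here, from the monotonicity of $p\mapsto\P((0,0)\leftrightarrow\infty)$ (immediate from the monotone coupling, since larger $p$ means more open sites) together with the already uniform Theorems~\ref{thm:largefinite} and~\ref{thm:coupled}.

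\emph{A structural identity.} On $\{(0,A)\nleftrightarrow\infty\}$ the cluster $U$ of $A$ is finite. All vertices of $U$ in slices $t\ge1$ are open, and its upper boundary $\partial^+U$ (vertices in some slice $t\ge1$ directly above $U$ but not in $U$) is entirely closed; conversely those two independent conditions identify $U$, so, summing over finite admissible shapes,
\begin{equation*}
\P\bigl((0,A)\nleftrightarrow\infty\bigr)=\sum_{U}p^{\,|U|-|A|}\,(1-p)^{\,|\partial^+U|}.
\end{equation*}
Writing $U_t$ for the $t$-th slice of $U$, the forward neighbourhood $N^+(U_t)$ decomposes as $U_{t+1}\sqcup(\partial^+U)_{t+1}$ and has at least $|U_t|$ vertices (one ``straight above'' each), so $|(\partial^+U)_{t+1}|\ge\max\{0,|U_t|-|U_{t+1}|\}$; summing over the finitely many non-empty slices telescopes to the key inequality $|\partial^+U|\ge|A|$. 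Hence $\P((0,A)\nleftrightarrow\infty)\le(1-p)^{|A|}\sum_Up^{|U|-|A|}(1-p)^{|\partial^+U|-|A|}$, and everything reduces to dominating the shape entropy.

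\emph{Closing the estimate, and the main obstacle.} When $p$ is close to $1$ the per-vertex costs are small enough for this sum to be controlled directly and one gets $e^{-c|A|}$. For $p$ only assumed $>\pcr+\eps$ this is no longer possible, and here I would renormalize: fix a scale $N=N(\eps)$, declare an $N$-block good when the connectivity and coupled-zone events of Definition~\ref{def:coupled} hold on it, and use Theorems~\ref{thm:largefinite} and~\ref{thm:coupled} to see that a block is good with probability $\ge1-\eps_0(N)$ with $\eps_0(N)\downarrow0$, that the good-block field is finitely dependent, that good blocks percolate, and that a microscopic source in a good block belonging to the infinite good cluster is connected to $\infty$. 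Running the argument above for the good-block field — whose effective parameter can be taken as close to $1$ as we like by enlarging $N$ — makes the Peierls sum converge, and since $A$ meets at least $|A|/N^{d}$ blocks we obtain $\P((0,A)\nleftrightarrow\infty)\le e^{-c_{\ref{thm:large_initial}}|A|}$ with $c_{\ref{thm:large_initial}}=c_{\ref{thm:large_initial}}(\eps)>0$. The delicate part is precisely this renormalization step — verifying finite dependence and the required degree of super-criticality of the good-block field with all constants depending on $\eps$ only; alternatively one may simply cite the exponential initial-set estimate from the super-critical oriented percolation literature and record that its constant depends on $p$ only through a lower bound on $p-\pcr$, as is done for the other estimates collected in the appendix.
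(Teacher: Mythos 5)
Your fallback option---citing the exponential initial-set estimate from the supercritical oriented percolation literature and recording that its constant depends on $p$ only through a lower bound on $p-\pcr$---is exactly what the paper does: the appendix invokes Liggett's Theorem~2.30(b) for the contact process, which rests on the Bezuidenhout--Grimmett renormalization, and obtains uniformity in $p>\pcr+\eps$ by monotonicity of the renormalized block density. At that level there is nothing to compare: you and the paper coincide.

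The self-contained argument you sketch, however, has a genuine gap at precisely the step you flag as delicate. Declaring an $N$-block good when ``the connectivity and coupled-zone events hold on it'' and extracting goodness probability $\ge 1-\eps_0(N)$ from Theorems~\ref{thm:largefinite} and~\ref{thm:coupled} cannot work: those theorems only bound the \emph{bad part} of the crossing event, namely $\P\left((0,0)\leftrightarrow(N,\Z^d),\,(0,0)\nleftrightarrow\infty\right)$ and $\P\left(\{(0,0)\leftrightarrow(N,\Z^d)\}\setminus C_N\right)$, while the crossing probability itself stays bounded away from $1$ (it tends to $\P((0,0)\leftrightarrow\infty)<1$). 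So a block event built from a single microscopic source either fails to have probability close to $1$, or must take the conditional form ``either no crossing or well-behaved'' (as in the events $\G$ and $\A_{n,k}$ of the paper), in which case good blocks need not contain the open crossings that your macroscopic-to-microscopic step (``a source in a good block of the infinite good cluster is connected to $\infty$'') requires. Manufacturing finite-box events of probability tending to $1$ that do force open crossings needs seed sets of diverging size---which is essentially the statement being proved, and is the actual content of the Bezuidenhout--Grimmett construction; it cannot be shortcut through Theorems~\ref{thm:largefinite} and~\ref{thm:coupled}. A secondary gloss: in the Peierls step at large (effective) parameter the factor $p^{|U|-|A|}$ gives no decay, so the shape entropy must be controlled by the size of the blocking set $\partial^+U$ rather than by $|U|$ (a cutset-counting argument, not automatic for $d\ge 2$); alternatively, at high parameter one can restrict to disjoint $(1+1)$-dimensional slices through the points of $A$ and use independence across slices. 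Your structural identity and the inequality $|\partial^+U|\ge|A|$ are correct and are a nice observation, but as written the proof is incomplete without importing the renormalization input that the paper's citation supplies.
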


\subsection{Good events}
\label{sec:good}
Our construction of the repairing path goes roughly as follows: Referring to Figure~\ref{fig:1}(iv), with high probability, 
\begin{enumerate}
 \item[(1)] there are forward and backward percolation points in distance of order $\ell_n^2$ from $k$ by Theorem~\ref{thm:largefinite}, denoted by $a$ and $b$ in Figure~\ref{fig:1}(iv),
 \item[(2)] there are many points in $\{k-1\}\times\Z^d$ connected to $a$, 
 \item[(3)] one of the points is followed by an infinite open path by Theorem~\ref{thm:large_initial},
 \item[(4)] the coupled zone from that point grows linearly by Theorem~\ref{thm:coupled},
 \item[(5)] similarly to (2) and (3), the point $b$ has an infinite backward open path after the slab inserted, 
 \item[(6)] there exists an open path connecting $a$ and $b$ by Definition~\ref{def:coupled}.
\end{enumerate}
For step (2), we thought that estimates on the size of the infected region
\begin{align*}
|\{y\in\Z^d:(0,0)\leftrightarrow(y,n)\}|
\end{align*}
should be known but we were not able to find it in the literature. It is possible to establish this type of result directly but we instead introduce abstract events to indicate that the configuration before and after time $k$ are well-behaved. We found it easier to prove the probability bound for these events, and also the abstract formulation simplifies the later arguments.

\begin{definition}\label{def:good}
Let $c_{\ref{def:good}}:=c_{\ref{thm:largefinite}}/5$ and set
\begin{align}
\G_k^{n,x}&:=\left\{\omega:\P\left((n,x)\leftrightarrow \infty\cond\mathcal \F_{[n,n+k]}
\right)\geq 1- e^{-c_{\cref{def:good}}k}\right\},\label{eq:def_initial}\\
\GG_k^{n,x}&:=\left\{\omega:\P\left(-\infty\leftrightarrow (n,x)\cond \F_{[n-k,n]}
\right)\geq 1- e^{-c_{\cref{def:good}}k}\right\}
\end{align}
with the convention $\G_k:=\G_k^{0,0}$ and $\G_{k,k+1}^{n,x}:=\G_k^{n,x}\cap \G_{k+1}^{n,x}$. Moreover, let
\begin{align}\label{eq:def_very_good}
\GGG_k:=\left\{\omega:\P\left((0,0)\leftrightarrow \infty\cond\mathcal \F_{k}\right)\geq 1- e^{-4c_{\cref{def:good}}k}\right\}.
\end{align}
\end{definition}
We think of $\omega\in\G$ as a \emph{good} environment and $\omega\in\GGG$ a \emph{very good} environment. The second condition $\GGG$ is necessary for technical reasons and is used only in Lemmas \ref{lem:AisTypical} and \ref{lem:A'isTypical}. It is easy to see that $G_k$ has high probability if the origin percolates.
\begin{lemma}\label{lem:good}
For any $\epsilon>0$ and $p>\pcr+\eps$, the following hold:
\begin{enumerate}
 \item[(i)]
For every $k\geq 0$,
\begin{align}
\P\left(\{(0,0)\leftrightarrow(k,\Z^d)\}\setminus \G_k\right)\leq \P\left(\{(0,0)\leftrightarrow(k,\Z^d)\}\setminus \GGG_k\right)\leq e^{-c_{\ref{def:good}}k}\label{eq:first}.
\end{align}
\item[(ii)] For every $0\leq k\leq l\leq 3k$, almost surely on $\GGG_k$,
\begin{align}
\P\left(\G_l^c\cond\F_k\right)\leq e^{-c_{\ref{def:good}}k}.\label{eq:second}
\end{align}
\end{enumerate}
\end{lemma}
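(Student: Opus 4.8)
\textbf{Proof proposal for Lemma~\ref{lem:good}.}

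\emph{Part (i).} The first inequality is immediate from the inclusion $\GGG_k\subseteq\G_k$, which holds because $\F_k=\F_{[0,k]}$ is a coarser $\sigma$-field than $\F_{[0,k]}$ itself (here $n=x=0$) and the exponent $4c_{\ref{def:good}}$ in the definition of $\GGG_k$ is larger than $c_{\ref{def:good}}$; thus $\omega\in\GGG_k$ forces $\P((0,0)\leftrightarrow\infty\mid\F_k)\ge 1-e^{-4c_{\ref{def:good}}k}\ge 1-e^{-c_{\ref{def:good}}k}$, i.e.\ $\omega\in\G_k$. For the main inequality, I would argue by a first-moment/Markov estimate on the conditional failure probability. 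Write $q(\omega):=\P((0,0)\nleftrightarrow\infty\mid\F_k)$. On the event $\{(0,0)\leftrightarrow(k,\Z^d)\}\setminus\GGG_k$ we have $q(\omega)>e^{-4c_{\ref{def:good}}k}$, so by Markov's inequality
\begin{align*}
\P\left(\{(0,0)\leftrightarrow(k,\Z^d)\}\setminus\GGG_k\right)
\le e^{4c_{\ref{def:good}}k}\,\E\!\left[q(\omega)\,\1_{\{(0,0)\leftrightarrow(k,\Z^d)\}}\right].
\end{align*}
Now $\{(0,0)\leftrightarrow(k,\Z^d)\}\in\F_k$, so pulling the indicator inside the conditional expectation and using the tower property,
\begin{align*}
\E\!\left[q(\omega)\,\1_{\{(0,0)\leftrightarrow(k,\Z^d)\}}\right]
=\P\!\left((0,0)\leftrightarrow(k,\Z^d),\,(0,0)\nleftrightarrow\infty\right)
\le e^{-c_{\ref{thm:largefinite}}k}
\end{align*}
by Theorem~\ref{thm:largefinite}. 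Combining the two displays gives the bound $e^{(4c_{\ref{def:good}}-c_{\ref{thm:largefinite}})k}=e^{(4/5-1)c_{\ref{thm:largefinite}}k}=e^{-c_{\ref{def:good}}k}$, using $c_{\ref{def:good}}=c_{\ref{thm:largefinite}}/5$. This is exactly \eqref{eq:first}.

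\emph{Part (ii).} Fix $\omega\in\GGG_k$ and $0\le k\le l\le 3k$. The idea is to transfer the good lower bound at time $k$ to a good lower bound at time $l$ by paying for the event that the cluster survives from time $k$ but dies between times $k$ and $l$. Work conditionally on $\F_k$. On $\GGG_k$ we have $\P((0,0)\leftrightarrow\infty\mid\F_k)\ge 1-e^{-4c_{\ref{def:good}}k}$. The event $\G_l^c$ requires $\P((0,0)\leftrightarrow\infty\mid\F_l)<1-e^{-c_{\ref{def:good}}l}$; since $l\le 3k$ this threshold is at least $1-e^{-c_{\ref{def:good}}l}\ge 1-e^{-(c_{\ref{def:good}}/3)l}$... more to the point, I would estimate $\P(\G_l^c\mid\F_k)$ by another Markov step. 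Let $r(\omega'):=\P((0,0)\nleftrightarrow\infty\mid\F_l)(\omega')$. By the tower property, $\E[r\mid\F_k]=\P((0,0)\nleftrightarrow\infty\mid\F_k)\le e^{-4c_{\ref{def:good}}k}$ on $\GGG_k$. On $\G_l^c$ we have $r>e^{-c_{\ref{def:good}}l}\ge e^{-3c_{\ref{def:good}}k}$ (using $l\le 3k$), so by the conditional Markov inequality,
\begin{align*}
\P\left(\G_l^c\mid\F_k\right)
\le e^{3c_{\ref{def:good}}k}\,\E\!\left[r\,\1_{\G_l^c}\mid\F_k\right]
\le e^{3c_{\ref{def:good}}k}\,\E\!\left[r\mid\F_k\right]
\le e^{3c_{\ref{def:good}}k}\cdot e^{-4c_{\ref{def:good}}k}
= e^{-c_{\ref{def:good}}k},
\end{align*}
which is \eqref{eq:second}. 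One subtlety to check: $\G_l$ is defined with $\F_l$ (the full $\sigma$-field up to time $l$, i.e.\ $\F_{[0,l]}$), which is consistent with the lower bound in the definition of $\G$ being $\F_{[n,n+k]}$-measurable with $n=0$, $n+k=l$; so $\G_l^c\in\F_l$ and the conditional expectations above are well-defined.

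\emph{Main obstacle.} There is essentially no deep obstacle here — both parts are soft measure-theoretic manipulations combined with Theorem~\ref{thm:largefinite}. The only point requiring care is bookkeeping of the exponential constants and the regime $k\le l\le 3k$: one must verify that $c_{\ref{def:good}}=c_{\ref{thm:largefinite}}/5$ is small enough that the Markov-inequality exponent $e^{3c_{\ref{def:good}}k}$ (or $e^{4c_{\ref{def:good}}k}$ in part (i)) does not overwhelm the decay $e^{-c_{\ref{thm:largefinite}}k}$ coming from the long-finite-cluster estimate; the factor $5=4+1$ is tuned precisely so that this works in both parts with room to spare, and the constraint $l\le 3k$ is what lets the threshold $e^{-c_{\ref{def:good}}l}$ in $\G_l^c$ be bounded below by $e^{-3c_{\ref{def:good}}k}$.
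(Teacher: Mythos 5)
Your proposal is correct and follows essentially the same route as the paper's proof: part (i) is exactly the paper's chain $e^{-c_{\ref{thm:largefinite}}k}\ge \E\left[\P((0,0)\nleftrightarrow\infty\mid\F_k)\1_{\{(0,0)\leftrightarrow(k,\Z^d)\}\setminus\GGG_k}\right]\ge e^{-4c_{\ref{def:good}}k}\,\P\left(\{(0,0)\leftrightarrow(k,\Z^d)\}\setminus\GGG_k\right)$, merely rephrased as a Markov inequality, and part (ii) is the same tower-property plus conditional-Markov computation with the $l\le 3k$ bookkeeping and $c_{\ref{def:good}}=c_{\ref{thm:largefinite}}/5$. (The only blemish is the garbled justification of $\GGG_k\subseteq\G_k$ via ``coarser $\sigma$-field''; the correct and sufficient point, which you also state, is that both events use the same $\sigma$-field $\F_k$ and the threshold $1-e^{-4c_{\ref{def:good}}k}$ is stronger.)
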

\begin{proof}
Since $\GGG_k\subseteq\G_k$, we only have to prove the second inequality. From Theorem \ref{thm:largefinite} and the definition of $\GGG_k$, we have
\begin{align*}
e^{-c_{\ref{thm:largefinite}}k}
&\geq \P\left((0,0)\leftrightarrow(k,\Z^d),(0,0)\nleftrightarrow\infty\right)\\
&\geq \E\left[\P((0,0)\nleftrightarrow\infty|\F_k)\1_{\{(0,0)\leftrightarrow(k,\Z^d)\}\setminus \GGG_k}\right]\\
&\geq e^{-4 c_{\ref{def:good}}k} \P\left(\{(0,0)\leftrightarrow(k,\Z^d)\}\setminus \GGG_k\right).
\end{align*}
Part (i) follows from our choice $c_{\ref{def:good}}=c_{\ref{thm:largefinite}}/5$. 
For part (ii) we have, on $\GGG_k$,
\begin{align*}
e^{-\frac 45 c_{\ref{thm:largefinite}}k}&\geq \P((0,0)\nleftrightarrow\infty|\F_k)\\
&\geq \E\left[\P((0,0)\nleftrightarrow\infty|\F_l)\1_{\G_l^c}\cond\F_k\right]\\
&\geq e^{-\frac 15 c_{\ref{thm:largefinite}} l}\P\left(\G_l^c\cond\F_k\right).
\end{align*}
For $l\leq 3k$, we indeed get $\P\left(\G_l^c\cond\F_k\right)\leq e^{-\frac 15c_{\ref{thm:largefinite}}k}= e^{-c_{\ref{def:good}} k}$.
\end{proof}

\subsection{Well-connected environments}

As explained in Section~\ref{sec:difficulty}, one of the main difficulties is to ensure that whenever a path of length $n$ is disconnected at level $k$, we can find a reasonably short repairing path around the the defect. As a first step in this direction, we introduce an event $\A_{n,k}$ which could be called the \emph{anticipating connection} event, and its backward version $\AA_{n,k}$. 

\begin{definition}\label{def:connected}
Let $\ell_n^2\leq k\leq n$ and 
\begin{align}
\A_{n,k}&:=\left\{\omega :
\begin{array}{l}
\text{For every path }\pi\text{ connecting }(0,0)\leftrightarrow(k,\Z^d),\\
\text{there exists } j\in [\ell_n^2,2\ell_n^2\wedge k]\text{ such that }\omega\in\G_{j,j+1}^{k-j,\pi(k-j)}
\end{array}\right\}, 
\label{eq:def_A}\\
\AA_{n,k}&:=\bigcap_{x\in[-k-\ell_n^2,k+\ell_n^2]^d}\left\{\omega: \text{Either }\omega\in\GG^{k+\ell_n^2,x}_{\ell_n^2-1}\text{ or }(k+1,\Z^d)\nleftrightarrow(k+\ell_n^2,x)\right\}.\label{eq:def_AA}
\end{align}
\end{definition}

Following the explanation before Definition~\ref{def:good}, one may interpret the event $\A_{n,k}$ as ``for every open path $\pi$, there is an earlier point $(k-j,\pi(k-j))$ from which there are many connections to $(k,\Z^d)$ and $(k+1,\Z^d)$''. 

\section{Concentration inequality}\label{sec:conc}

We will first prove Theorem~\ref{thm:conc} at zero temperature in Section~\ref{sec:conc_zero}, see Proposition \ref{prop:conc_zero}. The most important idea appears in this case and we think it is better to present it in the simplest setting first. The uniform concentration bound in positive temperature will be proved in Section~\ref{sec:conc_positive} by adapting the argument at zero temperature. 

\subsection{Concentration inequality at zero temperature}
\label{sec:conc_zero}

The goal of this section is to prove Theorem~\ref{thm:conc} at zero temperature. Throughout this section, we use the simple and suggestive notation $N_n$ instead of $Z_n^\infty$. The technical core is the proof of the following proposition.
\begin{proposition}\label{prop:conc_zero}
For every $\epsilon, \delta>0$ and $r>0$, there exist $c_{\cref{prop:conc_zero}}>0$ such that for all $n\in\N$ and all $p>\pcr+\eps$,
\begin{align}
\P\left((0,0)\leftrightarrow (n,\Z^d), \left|\log N_n-\E[\log N_n\mid (0,0)\leftrightarrow (n,\Z^d)]\right|\ge n^{\frac12+\delta}\right)\le c_{\cref{prop:conc_zero}}n^{-r}.
\end{align}
\end{proposition}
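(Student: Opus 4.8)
\textbf{Proof plan for Proposition~\ref{prop:conc_zero}.}

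The plan is to expose $\log N_n$ as a martingale in the time-slice filtration and to bound the increments not by the usual one-slice-resampling coupling, but by the slab-insertion coupling described in Section~\ref{sec:difficulty}. Concretely, I work on the event $\{(0,0)\leftrightarrow(n,\Z^d)\}$ and write $X:=\1_{\{(0,0)\leftrightarrow(n,\Z^d)\}}\log N_n$, with martingale $M_k=\E[X\mid\F_k]$ and increments $\Delta M_k=M_k-M_{k-1}$, for $k=1,\dots,n$. Since $\F_{k-1}$ and $\F_k$ differ only in the time slice $\{k\}\times\Z^d$, and the environment is product, $M_{k-1}$ and $M_k$ are both obtained by integrating out the environment strictly after time $k-1$, resp.\ after time $k$; by Fubini this lets me represent $\Delta M_k$ as the conditional expectation (given $\F_{k-1}$ and two independent copies of the post-$(k-1)$ environment) of the difference $X(\omega_2)-X(\omega_1)$, where $\omega_2$ is $\omega_1$ with a fresh independent slab inserted at time $k$. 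The first real step is therefore to make this representation precise, being a little careful that inserting a slab shifts the path length, so one compares $N_n$ in $\omega_1$ with $N_{n+1}$ (or $N_n$ after deleting one slice) in $\omega_2$; the $\log$ of the ratio of path counts at consecutive lengths is trivially in $[0,\log(2d+1)]$, which absorbs that bookkeeping.

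The second and central step is the influence bound: I want to show that, with probability at least $1-n^{-r'}$ (for suitable $r'$ depending on $r,\delta$), the slab-insertion coupling changes $\log N_n$ by at most something like $C\ell_n^2\log n$, i.e.\ by $(\log n)^{O(1)}$, and that in any case $|\Delta M_k|\le C n$ deterministically (from $0\le\log N_n\le n\log(2d+1)$ and the analogous trivial bound off the percolation event). For the good-probability bound I use the toolbox: for each open path $\pi$ of length $n$ in $\omega_1$ that gets disconnected in $\omega_2$, I locate a forward percolation point $a=(k-j,\pi(k-j))$ with $j\in[\ell_n^2,2\ell_n^2]$ using the well-connected event $\A_{n,k}$ (Definition~\ref{def:connected}) — this is exactly why $\A_{n,k}$ was introduced — and a backward percolation point $b$ just after the slab using $\AA_{n,k}$; then Theorems~\ref{thm:largefinite}, \ref{thm:coupled}, \ref{thm:large_initial} and Definition~\ref{def:coupled} give, on a further high-probability event, an open path in $\omega_2$ reconnecting $a$ to $b$ through the inserted slab (the slab buys the percolation cones enough time to meet regardless of the slope of $\pi$). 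The upshot is a map from $\omega_1$-open paths of length $n$ to $\omega_2$-open paths of length $n+1$ that alters each path only on a time window of length $O(\ell_n^2)$ around $k$; counting the branching this map can create/destroy in that window gives a multiplicative distortion of $N_n$ of at most $(2d+1)^{O(\ell_n^2)}=e^{(\log n)^{O(1)}}$, hence $|\log N_n(\omega_1)-\log N_{n+1}(\omega_2)|\le(\log n)^{O(1)}$ on the good event. Combining with Lemma~\ref{lem:good} and Theorems~\ref{thm:largefinite}, \ref{thm:coupled}, \ref{thm:large_initial}, a union bound over the (at most $n$ relevant) values of $k$ and over the polynomially-many relevant base points controls the probability that the good event fails at some $k$.

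The third step assembles the concentration estimate from the increment bounds. Since we only have a high-probability (not deterministic) bound on $|\Delta M_k|$, the Azuma--Hoeffding inequality is not directly applicable; instead I follow the route indicated in Section~\ref{sec:difficulty} and use Burkholder's martingale moment inequality~\cite{B66}: for fixed $m\in\N$, $\E[|M_n-M_0|^{2m}]\le C_m\,\E\bigl[(\sum_{k=1}^n(\Delta M_k)^2)^m\bigr]$. Writing $\Delta M_k=\Delta M_k\1_{E_k}+\Delta M_k\1_{E_k^c}$ with $E_k$ the good event at level $k$ (on which $|\Delta M_k|\le(\log n)^{O(1)}$) and $E_k^c$ the bad event (on which $|\Delta M_k|\le Cn$ but $\P(E_k^c)\le n^{-r'}$), the good part contributes $O(n\,(\log n)^{O(1)})$ to $\sum(\Delta M_k)^2$, while the bad part contributes at most $\sum_k (Cn)^2\1_{E_k^c}$, whose $m$-th moment is, by Hölder / $\P(E_k^c)$ small, at most $n^{2m}\cdot n^m\cdot n^{-r'm}$-ish — small once $r'$ is taken large enough relative to $m$. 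Hence $\E[|M_n-M_0|^{2m}]\le C_m n^m(\log n)^{O(1)}$, and Markov's inequality at level $n^{1/2+\delta}$ gives a bound of order $n^{-m(2\delta)+o(1)}$; choosing $m$ large (depending on $r,\delta$) yields the claimed $n^{-r}$. Finally I note $M_0=\E[X]=\E[\log N_n\mid(0,0)\leftrightarrow(n,\Z^d)]\cdot\P((0,0)\leftrightarrow(n,\Z^d))$ is not quite the conditional mean in the statement, but the difference is handled by Theorem~\ref{thm:largefinite}: off the percolation event $X$ is either $0$ or, on $\{(0,0)\leftrightarrow(n,\Z^d)\}\setminus\{(0,0)\leftrightarrow\infty\}$, bounded by $n\log(2d+1)$ on a set of exponentially small probability, so $|\E[X]-\E[X\mid\cdot]\,\P(\cdot)|$ and the conditioning adjustment cost only $e^{-cn}$, negligible against $n^{-r}$.

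\textbf{Main obstacle.} The crux is the influence bound of the second step: producing, with probability $1-n^{-\mathrm{poly}}$, a genuine combinatorial injection (or bounded-multiplicity map) from $\omega_1$-open paths to $\omega_2$-open paths that is the identity outside an $O(\ell_n^2)$ window and is defined even for paths that pass through sparse regions or have extremal slope near time $k$. Making the repairing path construction (steps (1)--(6) before Definition~\ref{def:good}) simultaneously valid for all relevant paths, and bounding the multiplicity of the resulting map so that the path count is only distorted by $e^{(\log n)^{O(1)}}$ rather than by a factor polynomial or worse in $n$, is where the real work lies; the martingale and moment machinery around it is, by comparison, routine.
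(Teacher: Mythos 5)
Your route is essentially the paper's (slab-insertion coupling for the martingale increments, repairing paths through the inserted slab, Burkholder plus Markov), but there is a genuine gap at the \emph{start} of the martingale. You run $M_k=\E[X\mid\F_k]$ from $k=0$ with $X=\1_{\{(0,0)\leftrightarrow(n,\Z^d)\}}\log N_n$ and claim that the discrepancy between $M_0=\E[X]$ and the conditional mean in the statement is handled by Theorem~\ref{thm:largefinite} at cost $e^{-cn}$. It is not: $\E[X]=\E[\log N_n\mid(0,0)\leftrightarrow(n,\Z^d)]\cdot\P\left((0,0)\leftrightarrow(n,\Z^d)\right)$, and for $p<1$ the percolation probability is bounded away from $1$, so this differs from the conditional mean by a quantity of order $n$, far above the scale $n^{\frac12+\delta}$; Theorem~\ref{thm:largefinite} only converts between $\{(0,0)\leftrightarrow(n,\Z^d)\}$ and $\{(0,0)\leftrightarrow\infty\}$ and cannot remove that multiplicative factor. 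The same problem appears in your influence bound for small $k$: for $k\lesssim\ell_n^2$ the repair anchor $(k-1-j,\pi(k-1-j))$ with $j\in[\ell_n^2,2\ell_n^2\wedge(k-1)]$ degenerates to the origin, whose ``goodness'' has probability only about $\P((0,0)\leftrightarrow\infty)<1$, not $1-n^{-r'}$. Concretely, for $k=1$, on the $\F_1$-event that all sites of the first slice adjacent to the origin are closed (probability at least $(1-p)^{3^d}$, independent of $n$) one has $\E[X\mid\F_1]=0$ while $\E[X]$ is of order $n$, so $|\Delta M_1|\asymp n$ with probability bounded below; then $\E\bigl[\bigl(\sum_k(\Delta M_k)^2\bigr)^q\bigr]\gtrsim n^{2q}$ and the Burkholder--Markov step at level $n^{\frac12+\delta}$ yields nothing. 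No union bound or choice of good events can repair this, because the failure probability at small $k$ is a constant.

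The paper's proof is built precisely to dodge this: the martingale is started at $\F_{\ell_n^2}$, the initial piece is restricted to the very good event $\GGG_{\ell_n^2}$ (whose complement inside $\{(0,0)\leftrightarrow(n,\Z^d)\}$ costs only $e^{-c\ell_n^2}$ by Lemma~\ref{lem:good}(i), and on which Lemma~\ref{lem:good}(ii) makes the good events at levels $k\le 3\ell_n^2$ conditionally overwhelming), and the concentration obtained is around $\E[\log\widetilde N_n\mid\F_{\ell_n^2}]$ (Lemma~\ref{lem:conc_1}). A second, separate slab/repair argument (Lemma~\ref{lem:conc_2}) then shows that for any two very good initial environments these conditional means differ by $O(\ell_n^4)$, which is what justifies replacing $\E[\log\widetilde N_n\mid\F_{\ell_n^2}]$ by $\E[\log\widetilde N_n\mid(0,0)\leftrightarrow(n,\Z^d)]$. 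Your plan is missing this comparison step, and without it (or an equivalent device) the centering constant is wrong by order $n$. The remainder of your outline does match the paper for $k$ beyond a few $\ell_n^2$: the increment is represented as $\Esle\bigl[\log\widetilde N_n([\omegas,\omegae]_k)-\log\widetilde N_n([\omegas,\omegasl,\omegae]_{k-1,k+\ell_n^4})\bigr]$ with both terms at path length $n$ (the image path simply drops the last $\ell_n^4$ steps of the original, so your $N_n$ versus $N_{n+1}$ bookkeeping is unnecessary), the bounded-multiplicity map gives an increment of size $O(\ell_n^4)$ on the good events, and the bad events have superpolynomially small probability, so the moment assembly goes through as you describe.
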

Once this proposition is proved, it is easy to see that one can change $(0,0)\leftrightarrow (n,\Z^d)$ to $(0,0)\leftrightarrow\infty$ by using Theorem~\ref{thm:largefinite} and $0\le (\log N_n)1_{\{(0,0)\leftrightarrow (n,\Z^d)\}} \le n\log(2d+1)$. 
Then, since the probability of $\P((0,0)\leftrightarrow \infty)$ is bounded away from zero for $p>\pcr+\eps$, the concentration under the conditional probability in \eqref{eq:conc} follows. 

In order for the conditional expectations to make sense in the case where the origin does not percolate, we define 
\begin{align*}
\widetilde N_n:=N_n+\1_{\{(0,0)\nleftrightarrow (n,\Z^d)\}},
\end{align*}
so that $\log \widetilde N_n=\1_{\{(0,0)\leftrightarrow (n,\Z^d)\}}\log N_n$. 
Our proof of Proposition~\ref{prop:conc_zero} consists of two steps. We first prove a concentration conditionally on $\F_{\ell_n^2}$ for environments whose initial part is \emph{very good}, i.e., $\omega\in \GGG_{\ell_n^2}$. 
\begin{lemma}
\label{lem:conc_1}
For every $\epsilon, \delta>0$ and $r>0$, there exist $c_{\cref{lem:conc_1}}>0$ such that for all $n\in\N$, $p>\pcr+\eps$ and $\omega\in \GGG_{\ell_n^2}$, 
\begin{equation}
\P\left(\left|\log\widetilde N_n-\E[\log\widetilde N_n\mid \F_{\ell_n^2}]\right|\ge \tfrac12 n^{\frac12+\delta}\cond \F_{\ell_n^2}\right)(\omega)\le c_{\cref{lem:conc_1}}n^{-r}.
\end{equation}
\end{lemma}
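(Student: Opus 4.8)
The plan is to run a martingale argument at the level of the time-slices after $\ell_n^2$, together with the slab-insertion coupling described in Section~\ref{sec:difficulty}, and then apply Burkholder's inequality. Concretely, fix $\omega$ with $\omega\in\GGG_{\ell_n^2}$ and work under $\P(\,\cdot\mid\F_{\ell_n^2})(\omega)$; the randomness that remains is $(\omega(t,x))_{t>\ell_n^2}$, which is still a product measure. For $\ell_n^2\le k\le n$ set
\begin{align*}
M_k:=\E\bigl[\log\widetilde N_n\cond\F_k\bigr],
\end{align*}
so that $M_{\ell_n^2}=\E[\log\widetilde N_n\mid\F_{\ell_n^2}]$, $M_n=\log\widetilde N_n$, and $\Delta M_k=M_k-M_{k-1}$ is a martingale difference sequence with respect to $(\F_k)_{k\ge\ell_n^2}$. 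By Burkholder's inequality~\cite{B66}, for any even integer $m$,
\begin{align*}
\E\Bigl[\bigl|\log\widetilde N_n-\E[\log\widetilde N_n\mid\F_{\ell_n^2}]\bigr|^m\cond\F_{\ell_n^2}\Bigr]
\le C_m\,\E\Bigl[\Bigl(\textstyle\sum_{k=\ell_n^2+1}^n(\Delta M_k)^2\Bigr)^{m/2}\cond\F_{\ell_n^2}\Bigr],
\end{align*}
and then Markov's inequality with a large enough $m$ (depending on $r,\delta$) turns a bound of the form $\E[(\Delta M_k)^{2m}\mid\F_{\ell_n^2}]\le \mathrm{polylog}(n)$ into the claimed $n^{-r}$-tail bound, since $n\cdot\mathrm{polylog}(n)=o(n^{1+2m\delta})$.

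The core of the argument is therefore the \emph{influence bound}: I need to show that each $\Delta M_k$ has all moments bounded by a power of $\log n$, uniformly in $k$, $n$ and $p>\pcr+\eps$. By independence, $\E[\log\widetilde N_n\mid\F_k]$ is an integral over the environment strictly after time $k$ and $\E[\log\widetilde N_n\mid\F_{k-1}]$ is an integral over the environment from time $k$ on, so $\Delta M_k$ can be written as an expectation (over the "future" environment) of $\log\widetilde N_n(\omega^{(1)})-\log\widetilde N_n(\omega^{(2)})$, where $\omega^{(2)}$ is obtained from $\omega^{(1)}$ by inserting at time $k$ an extra slab carrying an independent environment (Figure~\ref{fig:1}(iv)). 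It then suffices to bound $|\log\widetilde N_n(\omega^{(1)})-\log\widetilde N_n(\omega^{(2)})|$ in probability, i.e. to control the multiplicative distortion of the number of open paths caused by the slab insertion. Inserting the slab can only split each path, so $\widetilde N_n(\omega^{(2)})\le (2d+1)\widetilde N_n(\omega^{(1)})$ trivially, giving one direction with deterministic constant $\log(2d+1)$; the delicate direction is the lower bound $\widetilde N_n(\omega^{(2)})\ge c(n)\,\widetilde N_n(\omega^{(1)})$, which I would prove on a good event of probability $1-\mathrm{poly}(n)^{-1}$ by the repairing-path construction of steps~(1)--(6) before Definition~\ref{def:good}: each $\omega^{(1)}$-open path of length $n$ is mapped to an $\omega^{(2)}$-open path by locating forward/backward percolation points $a,b$ at distance $O(\ell_n^2)$ from $k$ (using $\A_{n,k}$, $\AA_{n,k}$ and Theorems~\ref{thm:largefinite},~\ref{thm:large_initial},~\ref{thm:coupled}, Definition~\ref{def:coupled}), rerouting through an open path inside the slab joining $a$ to $b$, and leaving the path unchanged outside the window $[k-O(\ell_n^2),k+O(\ell_n^2)]$. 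The number of distinct $\omega^{(1)}$-paths sent to a common $\omega^{(2)}$-path is at most the number of path-segments of length $O(\ell_n^2)$, i.e. $(2d+1)^{O(\ell_n^2)}=e^{O((\log n)^4)}$, so $\log\widetilde N_n$ changes by at most $O((\log n)^4)$ on the good event; on its complement I use the crude deterministic bound $|\log\widetilde N_n(\omega^{(i)})|\le n\log(2d+1)$, which contributes only $n\cdot e^{-c(\log n)^2}$ to every moment, hence is negligible. Taking the conditional expectation over the future environment, $|\Delta M_k|\le O((\log n)^4)$ except on an event of conditional probability $e^{-c(\log n)^2}$, which yields the required moment bound for $\Delta M_k$.

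The main obstacle I expect is making the repairing-path construction \emph{uniform over all $\omega^{(1)}$-open paths $\pi$ simultaneously} and controlling its cost. The events $\A_{n,k}$ and $\AA_{n,k}$ are designed exactly so that, no matter which path we must repair, there is a nearby point from which the percolation cone is well-connected; the real work is (a) checking that $\P(\A_{n,k}^c)$ and $\P(\AA_{n,k}^c)$ are at most $e^{-c(\log n)^2}$ uniformly in $k\in[\ell_n^2,n]$ and $p>\pcr+\eps$ (a union bound over $O(n^d)$ candidate vertices against the exponentially small estimates of Lemma~\ref{lem:good} and Theorems~\ref{thm:largefinite}--\ref{thm:large_initial}, which is why the window is taken of polylogarithmic and not constant size), and (b) verifying that the slab genuinely lets the forward cone from $a$ and the backward cone from $b$ meet, which is where the coupled-zone Theorem~\ref{thm:coupled} and Definition~\ref{def:coupled} enter — the extra $\ell_n^2$ units of time provided by the slab must dominate the at-most-linear spatial separation $|a-b|=O(\ell_n^2)$, forcing the choice $v$ of Definition~\ref{def:coupled} to be small but fixed. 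A secondary technical point is the role of the conditioning on $\GGG_{\ell_n^2}$: it guarantees via Lemma~\ref{lem:good}(ii) that the relevant good events near the \emph{bottom} of the window also have polylog-small complement under $\P(\cdot\mid\F_{\ell_n^2})$, so that the argument does not break for $k$ close to $\ell_n^2$; this is exactly why the proposition is stated conditionally on a very good initial segment and is later combined with Lemma~\ref{lem:good}(i) to remove that conditioning.
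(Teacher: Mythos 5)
Your overall architecture is the paper's: the same martingale $M_k=\E[\log\widetilde N_n\mid\F_k]$ under $\P(\cdot\mid\F_{\ell_n^2})$, Burkholder plus Markov, the slab-insertion coupling to represent $\Delta M_k$, the repairing construction on $\A_{n,k}$/$\AA_{n,k}$ with a $(2d+1)^{O(\ell_n^4)}$ multiplicity bound, and the trivial $n\log(2d+1)$ bound off an event of stretched-exponentially small conditional probability (your remark about the role of $\GGG_{\ell_n^2}$ and Lemma~\ref{lem:good}(ii) is also exactly how the paper handles $k$ near $\ell_n^2$).

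However, there is one genuine gap: the direction you call trivial is not. You claim that $\widetilde N_n(\omega^{(2)})\le(2d+1)\widetilde N_n(\omega^{(1)})$ deterministically, where $\omega^{(1)}=[\omegas,\omegae]_k$ and $\omega^{(2)}=[\omegas,\omegasl,\omegae]_{k-1,k+\ell_n^4}$. This is false: $\omega^{(2)}$ does not see the slice $\omegas(k,\cdot)$ at all, so for instance if that slice is closed on a large box then $\widetilde N_n(\omega^{(1)})=1$ while $\log\widetilde N_n(\omega^{(2)})$ can be of order $n$; moreover an open path of $\omega^{(2)}$ has no well-defined ``collapse'' to an $\omega^{(1)}$-path (the slab crossing ends at a spatially distant site, and the number of $\omega^{(2)}$-paths sharing a given skeleton outside the slab is exponential in $\ell_n^4$, not $2d+1$). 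Since Burkholder needs a two-sided bound on $\Delta M_k$, this direction cannot be skipped. In the paper it is Lemma~\ref{lem:mart_diff2}: one writes $-\Delta_k=\Esle[\log\widetilde N_n([\omegas,\omegae]_{k-1})-\log\widetilde N_n([\omegas,\omegasl,\omegae]_{k,k+\ell_n^4+1})]$, i.e.\ the slab is inserted \emph{after} time $k$ so that the retained slice $\omegas(k,\cdot)$ must be traversed by the repairing path, and this is exactly why Definition~\ref{def:connected} demands the double goodness $\G_{j,j+1}^{k-j,\pi(k-j)}$ (goodness at depths $j$ and $j+1$): it guarantees that the slice at time $k$ does not cut off the forward percolation from the chosen anchor point. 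So a second repairing argument, symmetric but not identical to the first, is required; your proposal as written only controls $\Delta M_k$ from above. A minor additional point: you let the slab have width of order $\ell_n^2$, comparable to the spatial separation $O(\ell_n^2)$ it must bridge; since the coupled-zone speed $v$ in Definition~\ref{def:coupled} is a small fixed constant that you cannot tune, the slab must be much wider than $\ell_n^2/v$ (the paper takes width $\ell_n^4$), otherwise the cone need not cover the target point.
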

Then in the second step, we get rid of the conditioning by showing that the contribution from the remaining environments is negligible. 
\begin{lemma}
\label{lem:conc_2}
For every $\epsilon,\delta>0$, there exists $c_{\cref{lem:conc_2}}>0$ such that for all $n\in\N$, $p>\pcr+\eps$ and $\omega\in \GGG_{\ell_n^2}$, 
\begin{equation}
\left|\E[\log\widetilde N_n\mid \F_{\ell_n^2}](\omega)-\E[\log \widetilde N_n\mid (0,0)\leftrightarrow (n,\Z^d)]\right| < c_{\cref{lem:conc_2}}n^{\frac12+\delta}.
\end{equation}
\end{lemma}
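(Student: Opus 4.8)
The plan is to estimate the difference between the two conditional expectations by comparing them both against the \emph{unconditional} expectation $\E[\log\widetilde N_n]$, and then to control each of the two resulting terms. For the first term, $\bigl|\E[\log\widetilde N_n\mid\F_{\ell_n^2}](\omega)-\E[\log\widetilde N_n]\bigr|$, the key observation is that revealing $\F_{\ell_n^2}$ only affects the first $\ell_n^2=\lfloor(\log n)^2\rfloor$ time-slices, which is a negligible time-scale. More precisely, since on $\omega\in\GGG_{\ell_n^2}$ the origin percolates through time $\ell_n^2$ with conditional probability close to $1$, I would fix the environment up to time $\ell_n^2$ and run a fresh (integrated) environment afterwards; the difference $\log\widetilde N_n(\omega)-\log\widetilde N_n(\omega')$ for two environments agreeing after time $\ell_n^2$ is bounded, modulo a rare disconnection event, by something like $\ell_n^2\log(2d+1)$ plus the cost of a short repairing path, hence by $O((\log n)^2)\ll n^{1/2+\delta}$. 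For the second term, $\bigl|\E[\log\widetilde N_n]-\E[\log\widetilde N_n\mid(0,0)\leftrightarrow(n,\Z^d)]\bigr|$, I would write $\E[\log\widetilde N_n]=\E[\log N_n\1_{\{(0,0)\leftrightarrow(n,\Z^d)\}}]=\P((0,0)\leftrightarrow(n,\Z^d))\cdot\E[\log\widetilde N_n\mid(0,0)\leftrightarrow(n,\Z^d)]$, so the difference equals $(1-\P((0,0)\leftrightarrow(n,\Z^d)))\cdot\E[\log\widetilde N_n\mid(0,0)\leftrightarrow(n,\Z^d)]$; since $\log\widetilde N_n\le n\log(2d+1)$ deterministically and $\P((0,0)\leftrightarrow(n,\Z^d))\to\P((0,0)\leftrightarrow\infty)$ is bounded away from $1$ by a constant depending only on $\eps$, this term is only $O(n)$ — which is \emph{too big}. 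So that naive split fails, and I would instead compare directly the conditional-on-$\F_{\ell_n^2}$ expectation against the conditional-on-$\{(0,0)\leftrightarrow(n,\Z^d)\}$ expectation.

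The corrected approach: condition throughout on a percolation-type event. Note that $\E[\log\widetilde N_n\mid\F_{\ell_n^2}](\omega)$ on $\omega\in\GGG_{\ell_n^2}$ is a conditional expectation of $\log N_n\1_{\{(0,0)\leftrightarrow(n,\Z^d)\}}$; decompose according to whether $(0,0)\leftrightarrow(n,\Z^d)$ occurs in the freshly sampled tail. Using Theorem~\ref{thm:largefinite} and the definition of $\GGG_{\ell_n^2}$, the conditional probability (given $\F_{\ell_n^2}$, on $\GGG_{\ell_n^2}$) that $(0,0)\leftrightarrow(n,\Z^d)$ fails is at most $e^{-c k}+e^{-cn}$ for $k=\ell_n^2$, i.e.\ at most $e^{-c(\log n)^2/2}$, which times the deterministic bound $n\log(2d+1)$ is $o(1)$. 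Thus $\E[\log\widetilde N_n\mid\F_{\ell_n^2}](\omega)$ differs from $\E[\log N_n\mid\F_{\ell_n^2},\,(0,0)\leftrightarrow(n,\Z^d)](\omega)$ by a negligible amount, and the same holds when comparing $\E[\log\widetilde N_n\mid(0,0)\leftrightarrow(n,\Z^d)]$ with $\E[\log N_n\mid(0,0)\leftrightarrow(n,\Z^d)]$. So the lemma reduces to bounding
\[
\Bigl|\E\bigl[\log N_n\cond\F_{\ell_n^2},\,(0,0)\leftrightarrow(n,\Z^d)\bigr](\omega)-\E\bigl[\log N_n\cond(0,0)\leftrightarrow(n,\Z^d)\bigr]\Bigr|,
\]
and here the point is that fixing $\F_{\ell_n^2}$ is a perturbation on a time-window of length only $(\log n)^2$: I would couple the two environments to agree after time $\ell_n^2$, and argue that $\log N_n$ changes by at most $O(\ell_n^2)+O(\text{repair length})$ outside an event of probability $\le c_{\ref{thm:largefinite}}^{-1}n^{-r}$ (say), invoking the well-connectedness machinery of Section~\ref{sec:good}–\ref{sec:5.3} exactly as in the proof of Lemma~\ref{lem:conc_1} but now for a single "slab" at the very beginning. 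Averaging the bounded difference and absorbing the rare bad event (on which we use the deterministic bound $\log N_n\le n\log(2d+1)$, contributing $\le n\log(2d+1)\cdot n^{-r}=o(n^{1/2+\delta})$ once $r\ge 1$) gives the claimed $O(n^{1/2+\delta})$ — in fact $O((\log n)^2)$, which is far better than needed but the weaker bound is all that is required downstream.

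The main obstacle I anticipate is the repairing-path step in the reduced problem: even though the perturbation window $[0,\ell_n^2]$ is short, an open path of length $n$ could in principle be disconnected near time $\ell_n^2$, and re-establishing connectivity to $(n,\Z^d)$ requires the same kind of argument (forward/backward percolation points, coupled zone, inserted independent structure) as in the core of the paper. The saving grace is that here we only need a \emph{crude} bound — any polynomial-in-$n$, or even $o(n^{1/2+\delta})$ after averaging over a bad event of polynomially small probability — rather than the sharp influence bound needed for Proposition~\ref{prop:conc_zero} itself; so I expect one can get away with a much simpler argument than the full construction, perhaps even just bounding $\log\widetilde N_n(\omega)-\log\widetilde N_n(\omega')$ crudely via the number of paths restricted to those agreeing after time $\ell_n^2$, together with the observation that on the good event $\GGG_{\ell_n^2}$ the number of open paths at time $\ell_n^2$ is comparable in the two environments. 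A secondary technical point is making sure all constants remain uniform in $p>\pcr+\eps$, which is handled by the uniform versions of Theorems~\ref{thm:largefinite}, \ref{thm:coupled}, \ref{thm:large_initial} collected in Section~\ref{sec:known} and the appendix.
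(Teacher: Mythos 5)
Your main route is essentially the paper's: after discarding the connection/goodness mismatches via the deterministic bound \eqref{eq:trivial} and exponential estimates, you compare the $\F_{\ell_n^2}$-conditional expectation at a very good initial segment with an averaged quantity by coupling the environments beyond time $\ell_n^2$, inserting an independent slab right after the initial segment, and repairing paths as in Lemma~\ref{lem:repair}. The paper's bookkeeping is slightly cleaner: it first replaces $\E[\log\widetilde N_n\mid(0,0)\leftrightarrow(n,\Z^d)]$ by $\E[\log\widetilde N_n\mid\GGG_{\ell_n^2}]$ (these differ by at most $1$ by Lemma~\ref{lem:good} and \eqref{eq:trivial}), and then bounds the pairwise difference $\bigl|\E[\log\widetilde N_n\mid\F_{\ell_n^2}](\omegas)-\E[\log\widetilde N_n\mid\F_{\ell_n^2}](\omegass)\bigr|\le 4\ell_n^4\log(2d+1)$ for any two very good initial segments $\omegas,\omegass\in\GGG_{\ell_n^2}$, by writing the second expectation over the slab-inserted environment $[\omegass,\omegasl,\omegae]_{\ell_n^2,\ell_n^2+\ell_n^4}$ and modifying Lemma~\ref{lem:repair}; averaging over $\omegass$ then yields the lemma. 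In your version, where you condition on $\{(0,0)\leftrightarrow(n,\Z^d)\}$ directly, note that the ``generic'' initial segment need not lie in $\GGG_{\ell_n^2}$, so before the repair step you must also restrict it to $\GGG_{\ell_n^2}$ (Lemma~\ref{lem:good}(i) together with the trivial bound disposes of the exceptional set), since the reconnection argument needs goodness on both sides. Finally, your hoped-for shortcut without the slab---coupling the environments to agree exactly after time $\ell_n^2$ and comparing the sets or numbers of open paths at that level---does not work: for a path of extremal slope, the forward cluster of the origin in the new initial segment and the backward clusters of the path's later points can miss each other within a window of length $O(\ell_n^2)$, which is precisely the obstruction the inserted slab of width $\ell_n^4$ is designed to remove; so the slab insertion is not optional here, even though only a crude polylogarithmic bound is needed.
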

\begin{proof}
[Proof of Proposition~\ref{prop:conc_zero} assuming Lemmas~\ref{lem:conc_1} and~\ref{lem:conc_2}] 
Noting that $N_n=\widetilde N_n$ on $\{(0,0)\leftrightarrow (n,\Z^d)\}$, we have
\begin{equation}
\begin{split}
&\P\left((0,0)\leftrightarrow (n,\Z^d),\left|\log N_n-\E[\log N_n\mid (0,0)\leftrightarrow (n,\Z^d)]\right|\ge n^{\frac12+\delta}\right)\\
&\quad \le \P\left(\{(0,0)\leftrightarrow (n,\Z^d)\}\setminus \GGG_{\ell_n^2} \right)\\
&\qquad +\P\left(\GGG_{\ell_n^2},\left|\log \widetilde N_n-\E[\log \widetilde N_n\mid (0,0)\leftrightarrow (n,\Z^d)]\right|\ge n^{\frac12+\delta}\right).
\end{split}
\end{equation}
By Lemmas~\ref{lem:good}(i) and~\ref{lem:conc_2}, for all $n\ge c_{\cref{lem:conc_2}}$, the right-hand side is bounded by
\begin{equation}
\begin{split}
&e^{-{c_{\cref{def:good}}}\ell_n^2}+\P\left(\GGG_{\ell_n^2},\left|\log \widetilde N_n-\E[\log \widetilde N_n\mid \F_{\ell_n^2}]\right|\ge \tfrac12 n^{\frac12+\delta}\right)\\
&\le e^{-c_{\cref{def:good}}\ell_n^2}+\E\left[\P\left(\left|\log\widetilde N_n-\E[\log\widetilde N_n\mid \F_{\ell_n^2}]\right|\ge \tfrac12 n^{\frac12+\delta}\cond \F_{\ell_n^2}\right); \GGG_{\ell_n^2} \right].
\end{split}
\end{equation}
Finally, Lemma~\ref{lem:conc_1} shows that the last line is bounded by $c n^{-r}$.
\end{proof}

\subsubsection{Conditional concentration at zero temperature}
In this section, we prove Lemma~\ref{lem:conc_1} by applying the following moment bound due to Burkholder~\cite[Theorem~9]{B66}:
\begin{theorem}
\label{thm:Burkholder}
For every $q\in\N$, there exists $c_{\cref{thm:Burkholder}}>0$ such that for every martingale $((M_n)_{n\in\N},\mathbb{P})$ and $l<n$,
\begin{align}
\label{eq:Burkholder}
\mathbb{E}\big[(M_n-M_l)^{2q}\big]\leq c_{\cref{thm:Burkholder}}\mathbb{E}\left[\left(\sum_{k=l+1}^{n}(M_{k}-M_{k-1})^2\right)^q\right].
\end{align}
\end{theorem}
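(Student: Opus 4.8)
This is the classical martingale moment inequality of Burkholder (\cite[Theorem~9]{B66}); since the exponent $2q$ is even there is an elementary self-contained argument, which I sketch. Fix the martingale and $l<n$, set $S_m:=M_m-M_l$ for $l\le m\le n$ and $d_k:=M_k-M_{k-1}$, so that $(S_m)_{m\ge l}$ is a martingale with $S_l=0$ and increments $d_{l+1},\dots,d_n$, and write $[S]_n:=\sum_{k=l+1}^{n}d_k^2$. If $\mathbb{E}\big[[S]_n^q\big]=\infty$ the inequality is vacuous, so assume it is finite; then Cauchy--Schwarz gives $S_n^2\le(n-l)[S]_n$, hence also $\mathbb{E}\big[S_n^{2q}\big]<\infty$. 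This a priori finiteness is the only thing needed to close the argument, and every constant produced below will depend on $q$ alone.

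First I would telescope and expand each increment by the binomial theorem: since $S_l=0$,
\begin{equation*}
S_n^{2q}=\sum_{k=l+1}^{n}\big(S_k^{2q}-S_{k-1}^{2q}\big),\qquad S_k^{2q}-S_{k-1}^{2q}=2q\,S_{k-1}^{2q-1}d_k+\sum_{j=2}^{2q}\binom{2q}{j}S_{k-1}^{2q-j}d_k^{\,j}.
\end{equation*}
Because $S_{k-1}$ is $\F_{k-1}$-measurable and $\mathbb{E}[d_k\mid\F_{k-1}]=0$, the linear terms have zero mean, so with $R_k:=\sum_{j=2}^{2q}\binom{2q}{j}S_{k-1}^{2q-j}d_k^{\,j}$ one gets $\mathbb{E}\big[S_n^{2q}\big]=\sum_{k=l+1}^{n}\mathbb{E}[R_k]$.

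Next I would estimate $R_k$ pointwise via the elementary bound $|S_{k-1}|^{2q-j}|d_k|^{j}\le|S_{k-1}|^{2q-2}d_k^2+d_k^{2q}$ for $2\le j\le 2q$ (split according to whether $|d_k|\le|S_{k-1}|$), which yields $|R_k|\le A_q\big(|S_{k-1}|^{2q-2}d_k^2+d_k^{2q}\big)$. Summing over $k$, using $\sum_k d_k^{2q}\le\big(\sum_k d_k^2\big)^q=[S]_n^q$ and $\sum_k|S_{k-1}|^{2q-2}d_k^2\le(S^*)^{2q-2}[S]_n$ with $S^*:=\max_{l\le m\le n}|S_m|$, and then applying H\"older with exponents $\tfrac{q}{q-1},q$ together with Doob's $L^{2q}$ maximal inequality $\mathbb{E}\big[(S^*)^{2q}\big]\le\big(\tfrac{2q}{2q-1}\big)^{2q}\mathbb{E}\big[S_n^{2q}\big]$, I obtain, with $X:=\mathbb{E}\big[S_n^{2q}\big]$ and $Y:=\mathbb{E}\big[[S]_n^q\big]$,
\begin{equation*}
X\le A_q\,B_q\,X^{\frac{q-1}{q}}Y^{\frac1q}+A_q\,Y,\qquad B_q:=\big(\tfrac{2q}{2q-1}\big)^{2(q-1)}.
\end{equation*}
Finally Young's inequality gives $A_qB_qX^{(q-1)/q}Y^{1/q}\le\tfrac12X+C_qY$, and since $X<\infty$ the term $\tfrac12X$ may be subtracted, leaving $X\le(2A_q+2C_q)Y$, which is the assertion with $c_{\ref{thm:Burkholder}}=2A_q+2C_q$.

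The argument has no serious obstacle: the one point that needs care is the absorption in the last step, which is legitimate precisely because we first reduced to $\mathbb{E}[S_n^{2q}]<\infty$; everything else is bookkeeping with the binomial expansion and routine applications of H\"older's and Doob's inequalities, and the constants come out depending only on $q$ as required.
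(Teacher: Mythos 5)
Your proof is correct, but it takes a genuinely different route from the paper: the paper does not prove Theorem~\ref{thm:Burkholder} at all, it simply imports it as Burkholder's classical result \cite[Theorem~9]{B66}, whereas you give a self-contained derivation that works precisely because the exponent $2q$ is an even integer (the general $L^p$ statement in \cite{B66} requires more machinery). Your chain of steps is sound: the reduction to $\mathbb{E}\big[[S]_n^q\big]<\infty$ makes all quantities finite via $S_m^2\le (n-l)[S]_n$; the telescoping plus binomial expansion kills the linear terms (note this silently uses integrability of $S_{k-1}^{2q-1}d_k$, which does follow from your a priori reduction by H\"older, so it is worth one sentence); the pointwise bound $|S_{k-1}|^{2q-j}|d_k|^j\le |S_{k-1}|^{2q-2}d_k^2+d_k^{2q}$ and the inequalities $\sum_k d_k^{2q}\le [S]_n^q$, $\sum_k|S_{k-1}|^{2q-2}d_k^2\le (S^*)^{2q-2}[S]_n$ are all valid; and H\"older, Doob's $L^{2q}$ maximal inequality and Young's inequality close the loop with constants depending only on $q$, the absorption of $\tfrac12 X$ being legitimate thanks to the finiteness reduction. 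The only cosmetic point is $q=1$, where the H\"older step with exponents $\tfrac{q}{q-1},q$ degenerates; there the claim is just the orthogonality identity $\mathbb{E}[S_n^2]=\mathbb{E}[[S]_n]$, so you should either treat it separately or note that $(S^*)^{2q-2}\equiv 1$ makes the step trivial. What your approach buys is a proof with explicit, elementary constants and no appeal to the literature, at the price of being restricted to even integer exponents --- which is exactly the form the paper needs in the proof of Lemma~\ref{lem:conc_1}, so the restriction is harmless here.
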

We consider the martingale
\begin{equation}\label{eq:M_n}
M_k:=\E[\log\widetilde N_n\mid \F_k],\quad \ell_n^2\le k\le n
\end{equation}
under $\P(\cdot \mid \F_{\ell_n^2})$ and therefore need a good bound on the martingale difference
\begin{align*}
\Delta_k:=\E[\log\widetilde N_n|\mathcal F_k]-\E[\log\widetilde N_n|\mathcal F_{k-1}]
\end{align*}
for $\ell_n^2+1 \le k \le n$. Observe that we have the trivial bound
\begin{align}
\label{eq:trivial}
|\Delta_k|\leq n\log(2d+1)
\end{align}
for all $\omega$, $n\in\N$ and $k\le n$. Thus it is enough to obtain a bound of the form $|\Delta_k|\le \ell_n^c$ on an event with probability larger than $1-o(n^{-q})$, and this is exactly what the two key lemmas in this section provide. Recall the events introduced in Definition \ref{def:connected}. 

\begin{lemma}
\label{lem:mart_diff}
There exists $c_{\ref{lem:mart_diff}}>0$ such that for every $n\in\N$, $\ell_n^2+1\leq k\leq n$ and $\omega \in \A_{n,k-1}$, 
\begin{equation}
\label{eq:Delta<}
\Delta_k \le c_{\ref{lem:mart_diff}} \ell_n^4\log (2d+1). 
\end{equation}
\end{lemma}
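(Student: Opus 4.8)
The plan is to bound $\Delta_k$ by comparing two environments via the ``slab insertion'' coupling described in Section \ref{sec:difficulty}, rather than by the naive resampling coupling. Since $\E[\log\widetilde N_n\mid\F_k]$ and $\E[\log\widetilde N_n\mid\F_{k-1}]$ differ only in whether the time-slice at level $k$ is integrated out, by independence we may write both conditional expectations as integrals over an independent copy of the environment from level $k$ onward; hence it suffices to find, for a pair of environments $\omega_1,\omega_2$ agreeing on $\F_{k-1}$ and outside level $k$ but with an independently resampled slice at level $k$, a deterministic upper bound $\log\widetilde N_n(\omega_2)-\log\widetilde N_n(\omega_1)\le c\,\ell_n^4\log(2d+1)$ valid on $\A_{n,k-1}$. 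Equivalently, following Figure~\ref{fig:1}(iv), I would think of $\omega_2$ as obtained from $\omega_1$ by inserting one extra independent slab at time $k$, so that every $\omega_1$-open path of length $n$ must be ``repaired'' to become an $\omega_2$-open path of length $n$, and then count how many $\omega_1$-paths can be mapped to the same $\omega_2$-path.

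First I would fix an $\omega_1$-open path $\pi$ of length $n$ (assume $\widetilde N_n(\omega_1)>1$, else the bound is trivial). Using $\omega\in\A_{n,k-1}$, applied to the restriction of $\pi$ up to time $k-1$, there exists $j\in[\ell_n^2,2\ell_n^2\wedge(k-1)]$ with $\omega\in\G_{j,j+1}^{k-1-j,\pi(k-1-j)}$; write $a:=(k-1-j,\pi(k-1-j))$. The good event $\G$ at the point $a$ guarantees (by Theorem~\ref{thm:large_initial} and the definition of $\G$, together with Lemma~\ref{lem:good}) that, with the slab-enlarged environment, the forward percolation cluster from near $a$ is abundant and reaches infinity with high probability; similarly, using $\AA_{n,k-1}$—or rather its role in the companion Lemma~\ref{lem:mart_diff}'s lower-bound counterpart—one locates a backward percolation point $b$ slightly after time $k$, at distance $O(\ell_n^2)$, whose backward cluster is abundant. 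The inserted slab then gives the forward cone from $a$ and the backward cone to $b$ enough extra time to meet (Theorem~\ref{thm:coupled} and Definition~\ref{def:coupled}), producing an $\omega_2$-open segment from $a$ to $b$. Splicing this segment into $\pi$ (keeping $\pi$ before time $k-1-j$ and after time $k+O(\ell_n^2)$, shifted by the slab width) yields an $\omega_2$-open path $\Phi(\pi)$ of length $n$.

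Next I would bound the multiplicity of $\Phi$: two $\omega_1$-paths $\pi,\pi'$ with $\Phi(\pi)=\Phi(\pi')$ must agree outside the time window $[k-1-j, k+O(\ell_n^2)]$, an interval of length $O(\ell_n^2)$, and inside that window they are arbitrary nearest-neighbour paths between fixed endpoints, of which there are at most $(2d+1)^{O(\ell_n^2)}$. Since the prefactor from the choice of $j$ and of the meeting point contributes only a polynomial-in-$\ell_n$ (hence polylog-in-$n$) factor, which is absorbed, this gives $\widetilde N_n(\omega_1)\le (2d+1)^{c\,\ell_n^2}\,\widetilde N_n(\omega_2)$, and taking logarithms yields $\Delta_k\le c\,\ell_n^2\log(2d+1)$, which is of the claimed form with room to spare. (The slightly larger exponent $\ell_n^4$ in the statement presumably comes from a cruder counting in the actual argument or from handling the case where $\pi$ is not $\omega_1$-open but one still needs a repairing path through low-density regions, as flagged at the end of Section~\ref{sec:difficulty}; I would allow for that by being generous with the window length.)

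The main obstacle, and the delicate point, is making rigorous that the repairing segment from $a$ to $b$ genuinely exists and has length $O(\ell_n^2)$: this requires simultaneously (i) that good points $a,b$ at distance $O(\ell_n^2)$ from level $k$ exist along every path—handled by $\A_{n,k-1}$ and its backward version—and (ii) that the forward and backward coupled zones emanating from $a$ and $b$ overlap once the extra slab is inserted, which is where Theorem~\ref{thm:coupled} and the linear growth of the coupled zone (Definition~\ref{def:coupled}) are essential and where the slab's role—buying extra time so the cones meet even for extremal-slope paths—must be quantified. Everything else (the multiplicity count, absorbing polylog factors, reducing from conditional expectations to the two-environment comparison) is routine bookkeeping.
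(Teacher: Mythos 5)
Your overall architecture is the same as the paper's: write $\Delta_k=\Esle\bigl[\log\widetilde N_n([\omegas,\omegae]_k)-\log\widetilde N_n([\omegas,\omegasl,\omegae]_{k-1,k+\ell_n^4})\bigr]$, use $\A_{n,k-1}$ to locate a forward good point at time $k-1-j$ with $j\in[\ell_n^2,2\ell_n^2]$, locate a backward good point just after the defect, join the two through the inserted slab via the coupled zone, and count preimages of the resulting map on paths. However, your reduction to ``a deterministic upper bound valid on $\A_{n,k-1}$'' is not correct as stated: $\A_{n,k-1}$ constrains only $\omegas$, and for unfavourable $(\omegasl,\omegae)$ the difference of the two logarithms can be as large as $n\log(2d+1)$, so no pathwise bound of order $\ell_n^c$ can hold on $\A_{n,k-1}$ alone. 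What is actually proved is that the repairing event $\AAA_{n,k}(\omegas,\omegae)$ of \eqref{eq:def_AAA} fails with probability at most $e^{-c\ell_n^2}$ (this is the content of Lemmas \ref{lem:AisTypical}(ii) and \ref{lem:repair}), and the exceptional contribution is absorbed in the expectation using the trivial bound \eqref{eq:trivial}, since $n\log(2d+1)\,e^{-c\ell_n^2}=o(1)$ by the choice \eqref{eq:ell} of $\ell_n$. You gesture at ``high probability'' but never carry out this step, and it is exactly why the bound is on the conditional expectation $\Delta_k$ rather than pointwise. (There is also a sign slip in your framing, $\log\widetilde N_n(\omega_2)-\log\widetilde N_n(\omega_1)$ versus $\log\widetilde N_n(\omega_1)-\log\widetilde N_n(\omega_2)$, though your later construction repairs in the correct direction.)

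The more substantive gap is the one you yourself label as ``must be quantified'': the existence of the repairing segment, which is the entire content of the paper's Lemma \ref{lem:repair} and hinges on the slab width being $\ell_n^4$, not $O(\ell_n^2)$ with an unspecified constant. The forward point $x'$ reached from $(k-1-j,\pi_1(k-1-j))$ and the backward point $y'$ feeding into $\pi_1(k+\ell_n^2)$ satisfy $|x'-y'|\le 4\ell_n^2$, while the coupled zone of Definition~\ref{def:coupled} grows only at a small speed $v$ (uniform in $p>\pcr+\eps$ but not explicit); the slab must therefore supply time of order at least $\ell_n^2/v$, and taking width $\ell_n^4$ makes $4\ell_n^2\ll v\ell_n^4$ automatic for large $n$, after which Theorem~\ref{thm:coupled} and the definition of the coupled zone produce the connection \eqref{eq:recovery_path}. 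Since the matched path is shifted by the slab width, the preimage count is $(2d+1)^{3\ell_n^2+\ell_n^4+1}$, which is where the $\ell_n^4$ in \eqref{eq:Delta<} really comes from --- not from ``cruder counting'' nor from repairing non-open paths (that issue only arises in the positive-temperature argument of Section~\ref{sec:conc_positive}). Your version can be salvaged by fixing the slab width as $C(\eps)\ell_n^2$ with $C(\eps)\ge 4/v$, or simply $\ell_n^4$, but without such a choice the claim that the forward and backward cones meet --- the heart of the lemma --- is left unproven. Finally, the backward good point is supplied by $\AA_{n,k}$ applied to $[\omegas,\omegae]_k$ and controlled by Lemma~\ref{lem:AisTypical}(ii), not by the ``lower-bound counterpart'' of the lemma.
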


In view of the explanation following Definition~\ref{def:connected}, the assumption $\omega \in \A_{n,k-1}$ corresponds to steps (1)--(2) in Section~\ref{sec:good}, and the conclusion corresponds to steps (3)--(6). For step (5), we also need $\omega\in \AA_{n,k}$ but this appears only in the proof. 

Before giving the proof of  Lemma \ref{lem:mart_diff}, we introduce some notation that will be used throughout the rest of paper. Let $(\omegas,\Ps)$ and $(\omegae,\Pe)$ be independent copies of $(\omega,\P)$, where the superscirpts ``b'' and ``e'' indicate ``beginning'' and ``end'' respectively, and define the environment
\begin{equation}\label{eq:} 
 [\omegas,\omegae]_k(m,x)=
\begin{cases}
\omegas(m,x), &m\le k,\\
\omegae(m-k,x), &m> k.
\end{cases}
\end{equation}
Note that $[\omegas,\omegae]_k$ has the same law as the original environment. This notation allows us to write 
\begin{equation}\label{eq:notation}
 \E[\log \widetilde{N}_n\mid \F_k]=\Ee[\log \widetilde{N}_n([\omegas,\omegae]_k)].
\end{equation}
As mentioned in Section~\ref{sec:difficulty}, we will also use the following environment: 
\begin{equation}\label{eq:def_coupling2}
 [\omegas,\omegasl,\omegae]_{k,l}(m,x)=
\begin{cases}
\omegas(m,x), &m\le k,\\
\omegasl(m-k,x), &k<m\le l,\\
\omegae(m-l,x), &m> l,
\end{cases}
\end{equation}
where $0\leq k\leq l$ and $(\omegasl,\Psl)$ is another independent copy of $(\omega,\P)$. The superscirpts ``sl'' indicates ``slab''. In words, a slab with new environment $\omegasl$ is inserted in the time interval $(k,l]$ and $\omegae$ is shifted accordingly. 


The following lemma shows that both $\A_{n,k}$ and 
$\AA_{n,k}$ 
have high probability.

\begin{lemma}
\label{lem:AisTypical}
For any $\epsilon>0$, there exists $c_{\cref{lem:AisTypical}}>0$ such that the following hold for all $p> \pcr+\epsilon$ and $n\in\N$:
\begin{enumerate}
 \item[(i)] for all $\ell_n^2\leq k\leq n$ and $\omega \in \GGG_{\ell_n^2}$,
\begin{align}
\label{eq:boundA_1}
&\P\left(
\A_{n,k}\cond \mathcal F_{\ell_n^2}\right)(\omega) \ge 1-e^{-c_{\cref{lem:AisTypical}}\ell_n^2};
\end{align}
\item[(ii)] for all $k\le n$ and $\omegas$, 
\begin{align}
\label{eq:boundA_2}
&\Pe\left([\omegas,\omegae]_k\in 
\AA_{n,k}\right) \ge 1-e^{-c_{\cref{lem:AisTypical}}\ell_n^2}.
\end{align}
\end{enumerate}
\end{lemma}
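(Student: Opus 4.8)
The plan is to estimate the probabilities of the complements $\A_{n,k}^c$ and $\AA_{n,k}^c$ by decomposing into the ``local'' bad events that appear in their definitions and applying the bounds from Lemma~\ref{lem:good} together with a union bound over the finitely many relevant vertices. For part (i), recall from~\eqref{eq:def_A} that $\A_{n,k}$ fails iff there is an open path $\pi$ connecting $(0,0)\leftrightarrow(k,\Z^d)$ such that for \emph{every} $j\in[\ell_n^2,2\ell_n^2\wedge k]$ we have $\omega\notin\G_{j,j+1}^{k-j,\pi(k-j)}$. The first step is to observe that, by Lemma~\ref{lem:good}(ii), if $\omega\in\GGG_{\ell_n^2}$ then on the event $\{(k-j,\pi(k-j))\leftrightarrow(k,\Z^d)\}$ the conditional probability (given $\F_{\ell_n^2}$, or more precisely given the configuration up to time $k-j$) of \emph{not} being in $\G_{j}^{k-j,\pi(k-j)}$ is at most $e^{-c_{\ref{def:good}}\,\text{(scale)}}$, and similarly for $\G_{j+1}^{k-j,\pi(k-j)}$. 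I would reduce the quantification over paths to a quantification over the \emph{endpoints} $y=\pi(k-j)$ ranging over $[-k,k]^d\cap\Z^d$ (at most $(2k+1)^d\le (2n+1)^d$ of them), and over the at most $\ell_n^2$ values of $j$; applying the Markov property of oriented percolation at time $k-j$ to the event $\{(k-j,y)\leftrightarrow(k,\Z^d)\}$ and~\eqref{eq:second}, each term contributes $e^{-c\ell_n^2}$, so the union bound gives $\P(\A_{n,k}^c\mid\F_{\ell_n^2})\le (2n+1)^d\ell_n^2 e^{-c\ell_n^2}$, which is $\le e^{-c'\ell_n^2}$ for an adjusted constant since $\ell_n^2=\lfloor(\log n)^2\rfloor^2$ grows faster than $\log n$. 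The subtle point to get right is that the relevant $\G$-event at the point $\pi(k-j)$ lives in the $\sigma$-field $\F_{[k-j,k+1-j+\text{scale}]}$, which is independent of $\F_{\ell_n^2}$ when $k-j\ge\ell_n^2$, so the conditioning on $\GGG_{\ell_n^2}$ and on $\F_{\ell_n^2}$ does not interfere; one must check the time-index bookkeeping carefully, using $j\le 2\ell_n^2$ and $k\le n$ to keep all relevant scales comparable to $\ell_n^2$.

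For part (ii), the event $\AA_{n,k}$ in~\eqref{eq:def_AA} is an intersection over $x\in[-k-\ell_n^2,k+\ell_n^2]^d$ of events stating ``either $\omega\in\GG^{k+\ell_n^2,x}_{\ell_n^2-1}$ or $(k+1,\Z^d)\nleftrightarrow(k+\ell_n^2,x)$.'' This is by construction an event measurable with respect to the environment in the time window $(k,k+\ell_n^2]$, hence it depends only on $\omegae$ (shifted), so $\Pe$ is the right measure and the statement is uniform in $\omegas$ as claimed. The second step is therefore purely a statement about a single independent copy of the environment: for each fixed $x$, the $x$-th event fails iff $(k+1,\Z^d)\leftrightarrow(k+\ell_n^2,x)$ but $\omega\notin\GG^{k+\ell_n^2,x}_{\ell_n^2-1}$, i.e.\ $x$ is reached by a backward path of length $\ell_n^2-1$ yet has conditionally small backward-percolation probability. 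This is exactly the backward analogue of the event bounded in Lemma~\ref{lem:good}(i) (applied with $\GG$ in place of $\GGG$, or rather the chain of inequalities there), and I would invoke the reversal symmetry of oriented percolation noted in the remark after Definition~\ref{def:coupled} to get $\Pe(\text{$x$-th event fails})\le e^{-c_{\ref{def:good}}(\ell_n^2-1)}$. Taking a union bound over the at most $(2k+2\ell_n^2+1)^d\le(4n)^d$ values of $x$ again yields a bound of the form $e^{-c'\ell_n^2}$ after absorbing the polynomial factor.

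The main obstacle I anticipate is not any single estimate but rather the translation from the path-quantified event $\A_{n,k}$ to a vertex-quantified one while respecting the conditioning on $\GGG_{\ell_n^2}$ and $\F_{\ell_n^2}$: one needs that for a path $\pi$ with $\pi(k-j)=y$, the assertion $\omega\in\G_{j,j+1}^{k-j,y}$ depends only on $y$ and not on the rest of $\pi$, which is true by the definition of $\G$, but one must also ensure that the ``there exists a path through $y$'' part is correctly paired with the ``$(k-j,y)$ is connected from the past'' part so that Lemma~\ref{lem:good}(ii) applies with $\GGG_k$ replaced by the relevant very-good event — here is where the hypothesis $\omega\in\GGG_{\ell_n^2}$ is used, via the tower property, to guarantee that $\GGG_{k-j}$-type control propagates forward. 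Handling the boundary case $k-j<3\ell_n^2$ (so that Lemma~\ref{lem:good}(ii) is applied with $l\le 3k$ exactly at its limit) requires choosing the window in Definition~\ref{def:good} so that $j,j+1\le 3(k-j)$, which holds since $j\le 2\ell_n^2\le k$ forces $k-j\ge$ a constant multiple of $\ell_n^2$ in the regime $k\le 3\ell_n^2$; I would spell out this elementary inequality to close the argument.
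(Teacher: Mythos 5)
Your overall architecture is the right one and, in outline, matches the paper's: reduce the path-quantified event to a vertex union bound, exploit independence from $\F_{\ell_n^2}$ for base points at times beyond $\ell_n^2$, reserve the hypothesis $\omega\in\GGG_{\ell_n^2}$ and Lemma~\ref{lem:good}(ii) for the case where the base point is the origin, and handle \eqref{eq:boundA_2} by time reversal; your treatment of part (ii) is essentially correct. The gap is in the key per-term estimate of part (i). For a non-origin point $(k-j,y)$ you bound the conditional probability of $\{(k-j,y)\leftrightarrow(k,\Z^d)\}\setminus\G^{k-j,y}_{j,j+1}$ by invoking Lemma~\ref{lem:good}(ii) ``given the configuration up to time $k-j$''. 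But \eqref{eq:second} only applies when the very-good event $\GGG$ is known to hold \emph{at the same base point} at the conditioning time, and nothing of the sort is available at $(k-j,y)$: the hypothesis $\omega\in\GGG_{\ell_n^2}$ concerns the origin, and the forward connection $(k-j,y)\leftrightarrow(k,\Z^d)$ does not supply a $\GGG$-type event either --- your ``tower property / $\GGG_{k-j}$-type control propagates forward'' remark is exactly the missing step, and it is not true as stated. What is needed (and what the paper uses) is an \emph{unconditional} bound, available after noting independence from $\F_{\ell_n^2}$: by translation invariance,
\begin{align*}
\P\left(\{(k-j,y)\leftrightarrow(k,\Z^d)\}\setminus\G_{j,j+1}^{k-j,y}\right)
&\le \P\left((0,0)\leftrightarrow(j,\Z^d),\,(0,0)\nleftrightarrow\infty\right)
+\P\left(\{(0,0)\leftrightarrow\infty\}\setminus\G_{j,j+1}\right)\\
&\le e^{-cj},
\end{align*}
which rests on Theorem~\ref{thm:largefinite} together with Lemma~\ref{lem:good}(i); Theorem~\ref{thm:largefinite} never enters your part (i).

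A related problem is your union over all $j\in[\ell_n^2,2\ell_n^2\wedge k]$. Since failure of $\A_{n,k}$ means failure at \emph{every} $j$ along one path, a single fixed $j$ already gives the inclusion into a vertex union, and keeping all $j$ actually hurts you: when $k-j\le\ell_n^2$ the event at $(k-j,y)$ is not independent of $\F_{\ell_n^2}$, and with $y\neq 0$ neither independence nor Lemma~\ref{lem:good}(ii) is applicable. Your boundary bookkeeping is also incorrect: $j\le 2\ell_n^2\le k$ does not force $k-j\gtrsim\ell_n^2$ (take $j=k=2\ell_n^2$, so $k-j=0$). The repair is the dichotomy the paper uses: if $k\le 2\ell_n^2$, choose $j=k$, so the base point is the origin and Lemma~\ref{lem:good}(ii) applies directly on $\GGG_{\ell_n^2}$ with $l=k,\,k+1\le 3\ell_n^2$; if $k>2\ell_n^2$, choose $j=\ell_n^2$, so $k-j>\ell_n^2$, the per-vertex events are independent of $\F_{\ell_n^2}$, and the displayed unconditional bound plus a union bound over the at most $(2n+1)^d$ relevant $y$ yields \eqref{eq:boundA_1} because $\ell_n^2\gg\log n$. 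With the same largefinite-plus-good(i) input (time reversed), your argument for \eqref{eq:boundA_2} goes through as you describe.
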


\begin{proof}[Proof of Lemma \ref{lem:AisTypical}]
First, in the case $k\le2\ell_n^2$, we choose $j=k$ in \eqref{eq:def_A} and apply Lemma~\ref{lem:good}(ii). Next, we consider~\eqref{eq:boundA_1} in the case $k> 2\ell_n^2$. In this case, we choose $j=\ell_n^2$ and 
note that
\begin{align*}
\bigcap_{x\in[-k+\ell_n^2,k-\ell_n^2]^d}\Big\{\text{Either }\omega\in\G_{\ell_n^2,\ell_n^2+1}^{k-\ell_n^2,x}
\text{ or } (k-\ell_n^2,x)\nleftrightarrow(k,\Z^d)
\Big\}\subset\A_{n,k}
\end{align*}
since $\pi(k-\ell_n^2)\in [-k+\ell_n^2,k-\ell_n^2]^d$ for any path $\pi$ appearing in \eqref{eq:def_A}. Note that the left-hand side is independent of $\F_{\ell_n^2}$ and the probability of the events in the intersection does not depend on $k$ and $x$. Thus we can estimate the probability of the complement of each event by
\begin{align*}
&\P\left(\{(0,0)\leftrightarrow(\ell_n^2,\Z^d)\}\setminus\G_{\ell_n^2,\ell_n^2+1}\right)\\
&\quad\leq \P\left((0,0)\leftrightarrow(\ell_n^2,\Z^d),(0,0)\nleftrightarrow \infty\right)+\P\left(\{(0,0)\leftrightarrow\infty\}\setminus\G_{\ell_n^2,\ell_n^2+1}\right).
\end{align*}
The conclusion then follows from Theorem \ref{thm:largefinite}, Lemma \ref{lem:good}(i) and the union bound. The second assertion \eqref{eq:boundA_2} follows in the same way.
\end{proof}

\begin{proof}[Proof of Lemma \ref{lem:mart_diff}]
Recalling the idea explained in Section~\ref{sec:difficulty}, we write 
\begin{align*}
\Delta_k =\Esle\left[\log\widetilde N_n([\omegas,\omegae]_k)-\log\widetilde N_n([\omegas,\omegasl,\omegae]_{k-1,k+\ell_n^4})\right],
\end{align*}
where $\Esle=\Esl\otimes\Ee$. This is the coupling mentioned in Section \ref{sec:difficulty}. We show that on an event with high $\Psle$-probability, most of the $[\omegas,\omegae]_k$-open paths can be matched with $[\omegas,\omegasl,\omegae]_{k-1,k+\ell_n^4}$-open paths, from which \eqref{eq:Delta<} follows by taking expectation. The extra slab makes it possible to recover an open path by repairing the defect caused by changing the environment at time $k$. To make this precise, for fixed $\omegas$ and $\omegae$, we define the event
\begin{align}
\AAA_{n,k}(\omegas,\omegae)
:=\left\{\omegasl \colon
\begin{array}{l}
\text{For any open path $\pi_1$ from }(0,0)\text{ to }(n,\Z^d)\\
\text{in }[\omegas,\omegae]_k,\text{ there exists an open path $\pi_2$}\\
\text{from }(0,0)\text{ to }(n,\Z^d)\text{ in }[\omegas,\omegasl,\omegae]_{k-1,k+\ell_n^4}\\
\text{such that }\pi_1(j)=\pi_2(j)\text{ for all }j\le (k-1-2\ell_n^2)_+\\
\text{and }\pi_1(j)=\pi_2(j+\ell_n^4)\text{ for all }k+\ell_n^2\le j \le n-\ell_n^4.
\end{array}
\right\}.\label{eq:def_AAA}
\end{align}
On this event, all open paths in $[\omegas,\omegae]_k$ of length $n$ can be mapped to open paths in $[\omegas,\omegasl,\omegae]_{k-1,k+\ell_n^4}$. In the case $k\leq n-\ell_n^4-\ell_n^2$, thanks to the properties of $\pi_2$ in the third and fourth lines above, two open paths $\pi_1$ and $\pi_1'$ in $[\omegas,\omegae]_k$ cannot be mapped to the same path unless $\pi_1(j)=\pi_1'(j)$ for all $j\le (k-1-2\ell_n^2)_+$ and all $k+\ell_n^2 \le j \le n-\ell_n^4$. Therefore, on $\AAA_{n,k}(\omegas,\omegae)$, each open path in $[\omegas,\omegasl,\omegae]_{k-1,k+\ell_n^4}$ has at most $(2d+1)^{3\ell_n^2+\ell_n^4+1}$ pre-images and hence we have 
\begin{equation}
\label{eq:OnA_3}
\log \widetilde{N}_n([\omegas,\omegae]_k)- \log \widetilde{N}_n([\omegas,\omegasl,\omegae]_{k-1,k+\ell_n^4}) \le 2\ell_n^4 \log (2d+1)
\end{equation}
for all sufficiently large $n$. In the case $k > n-\ell_n^4-\ell_n^2$, the last line in~\eqref{eq:def_AAA} is void and then a similar argument applies. Next we need to show that $\AAA_{n,k}(\omegas,\omegae)$ has a high probability if $\omegas$ and $\omegae$ are ``well-connected'' environments in the sense of Definition~\ref{def:connected}.

\begin{lemma}
\label{lem:repair}
For any $\eps>0$, there exists $c_{\cref{lem:repair}}>0$ such that for any $p> \pcr+\eps$, $n\in\N$, $\ell_n^2+1\le k \le n$, $\omegas\in \A_{n,k-1}$ and $\omegae$ such that $[\omegas,\omegae]_k\in \AA_{n,k}$, the following holds:
\begin{equation}
 \Psl\left(\omegasl\notin \AAA_{n,k}(\omegas,\omegae)\right) \le e^{-c_{\cref{lem:repair}}\ell_n^2}.
\end{equation} 
\end{lemma}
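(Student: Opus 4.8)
The plan is to fix a path $\pi_1$ realizing $(0,0)\leftrightarrow(n,\Z^d)$ in $[\omegas,\omegae]_k$ and construct, with $\Psl$-probability at least $1-e^{-c\ell_n^2}$ (uniformly over all such $\pi_1$, after a union bound over a polynomially-large family of relevant starting configurations), a path $\pi_2$ that matches $\pi_1$ outside the window around time $k$ in the manner demanded by~\eqref{eq:def_AAA}. First I would locate a \emph{forward anchor}: restricting $\pi_1$ to $\{0,\dots,k-1\}$ gives a path connecting $(0,0)$ to $(k-1,\Z^d)$ in $\omegas$; since $\omegas\in\A_{n,k-1}$, there is $j\in[\ell_n^2,2\ell_n^2\wedge(k-1)]$ with $\omegas\in \G_{j,j+1}^{k-1-j,\pi_1(k-1-j)}$, which is precisely the good-environment condition guaranteeing $\P((k-1-j,\pi_1(k-1-j))\leftrightarrow\infty\mid\F_{[k-1-j,\,k-1]})\ge 1-e^{-c_{\ref{def:good}}j}$ for both $j$ and $j+1$. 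Set $a:=(k-1-j,\pi_1(k-1-j))$; this gives the point $a$ of Figure~\ref{fig:1}(iv). Symmetrically, I would use $[\omegas,\omegae]_k\in\AA_{n,k}$ and the point $b:=(k+\ell_n^2,\pi_1(k+\ell_n^2))$: since $\pi_1$ is $[\omegas,\omegae]_k$-open it in particular connects $(k+1,\Z^d)\leftrightarrow(k+\ell_n^2,\pi_1(k+\ell_n^2))$ in $\omegae$, so the ``either/or'' in~\eqref{eq:def_AA} forces $[\omegas,\omegae]_k\in\GG_{\ell_n^2-1}^{k+\ell_n^2,\pi_1(k+\ell_n^2)}$, i.e. a backward good-environment bound at $b$.

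Next I would grow the percolation clusters through the inserted slab. The key point is that $a$ sits at time $k-1-j\ge k-1-2\ell_n^2$ and $b$ at time $k+\ell_n^2$, but in the coordinates of $[\omegas,\omegasl,\omegae]_{k-1,k+\ell_n^4}$ the slab of length $\ell_n^4$ is inserted right after time $k-1$, so that $a$ keeps its index $k-1-j$ while $b$ is pushed to time $k+\ell_n^2+\ell_n^4$; the temporal gap between them becomes of order $\ell_n^4$. From the good event at $a$ I get, with probability $\ge 1-2e^{-c_{\ref{def:good}}\ell_n^2}$ over the slab environment and the shifted $\omegae$, a forward infinite open path from $a$; by Theorem~\ref{thm:large_initial} applied to the order-$\ell_n^{2d}$ many points at time $k-1-j+\ell_n^2$ reachable from $a$ (these exist since $\G^{k-1-j,\cdot}_{j+1}$ controls the connection to two consecutive future slices), with probability $\ge 1-e^{-c_{\ref{thm:large_initial}}\ell_n^{2d}}$ one of them, say $a'$, percolates, and by Theorem~\ref{thm:coupled} its coupled zone $C^{\cdot}_{\ell_n^4/2}$ occurs with probability $\ge 1-e^{-c_{\ref{thm:coupled}}\ell_n^4}$, so the forward cone from $a'$ fills $a'+[-v\ell_n^4/2,\,v\ell_n^4/2]^d$ at time $(k-1-j+\ell_n^2)+\ell_n^4/2$. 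Running the symmetric construction backward from $b$ gives a backward-percolating point $b'$ whose backward coupled zone fills a comparable box at the same time slice. Because both boxes have side $\asymp v\ell_n^4$ and their centers are at $\Z^d$-distance $O(\ell_n^2)$ (coming from the short segments of $\pi_1$ between $a,a'$ and $b,b'$ and the bounded displacement in one slab step is not an issue here — the displacement is $O(\ell_n^2+\ell_n^2)$), the boxes overlap for large $n$; picking $y$ in the overlap and using the coupled-zone property on both sides yields an open path in $[\omegas,\omegasl,\omegae]_{k-1,k+\ell_n^4}$ from $a'$ to $y$ to $b'$, hence from $a$ to $b$.

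Then I assemble $\pi_2$: follow $\pi_1$ up to time $(k-1-2\ell_n^2)_+\le k-1-j$, append the short $\omegas$-segment of $\pi_1$ reaching $a$ and then $a'$, traverse the repairing path through the slab to $b'$, append the short backward segment back to $b=(k+\ell_n^2,\pi_1(k+\ell_n^2))$ — now sitting at the shifted time $k+\ell_n^2+\ell_n^4$ — and from there follow $\pi_1$ (shifted by $\ell_n^4$) to the end; openness of the tail segment holds because $[\omegas,\omegasl,\omegae]_{k-1,k+\ell_n^4}(m,x)=\omegae(m-k-\ell_n^4,x)$ for $m>k+\ell_n^4$ matches the values $\pi_1$ used in $[\omegas,\omegae]_k$. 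A little care is needed that the repairing path has total length compatible with the prescribed matching windows in~\eqref{eq:def_AAA}, which is why the windows there are $2\ell_n^2$ and $\ell_n^4$ wide; one also checks the degenerate cases $k\le 2\ell_n^2$ (where $a$ may be the origin) and $k>n-\ell_n^4-\ell_n^2$ (where the tail window is void) separately, as in Lemma~\ref{lem:mart_diff}. Collecting the failure probabilities — $2e^{-c_{\ref{def:good}}\ell_n^2}$ from the two anchors, $2e^{-c_{\ref{thm:large_initial}}\ell_n^{2d}}$, $2e^{-c_{\ref{thm:coupled}}\ell_n^4}$ — and taking a union bound over the at most $(2d+1)^{O(\ell_n^2)}$ possible local configurations of $\pi_1$ near time $k$ (the only data the construction depends on), the total is $\le e^{-c_{\cref{lem:repair}}\ell_n^2}$ for a suitable $c_{\cref{lem:repair}}$ and all large $n$; small $n$ are absorbed into the constant. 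The main obstacle I anticipate is the bookkeeping of which events are measurable with respect to which slab/end environment so that independence is genuinely available when multiplying the above probabilities — in particular, the forward construction from $a$ lives on $\omegasl$ together with the first $\ell_n^2$ slices of $\omegasl$, the backward one from $b$ on $\omegasl$ and on $\omegae$, and they must be coordinated so that the coupled zones meeting event is not double-counted; this is exactly the point where ``some ideas from~\cite{GGM17}'' are needed, and where I would be most careful.
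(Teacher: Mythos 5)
Your overall strategy (anchor points supplied by $\A_{n,k-1}$ and $\AA_{n,k}$, crossing the inserted slab, coupled zone to bridge) is the same as the paper's, but three steps as written do not go through. First, the bridging step misreads Definition~\ref{def:coupled}: the coupled-zone event does not say that the cone from $a'$ ``fills'' the box $a'+[-v\ell_n^4/2,v\ell_n^4/2]^d$; it only gives the dichotomy that each $y$ in the box is either not reachable from the whole time slice or reachable from $a'$. Picking an arbitrary $y$ in the overlap of the two boxes therefore yields nothing unless you exhibit a $y$ already known to be connected from the past slice and to the future slice; with two cones meeting in the middle you have no such point (a surviving cluster point of $a'$ at the meeting time can be at distance up to $\ell_n^4/2\gg v\ell_n^4/2$ from $a'$, so it need not lie in $b'$'s box, and vice versa). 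The paper closes this by using a single forward coupled zone $\C^{k-1,x'}_{\ell_n^4+1}$ spanning the whole slab, where $x'$ is the level-$(k-1)$ endpoint of the forward crossing from the anchor, and by resolving the dichotomy at the explicit point $y'$ at level $k+\ell_n^4$ on the backward crossing path to $\pi_1(k+\ell_n^2)$, which is known to be connected from $(k-1,\Z^d)$ and satisfies $|x'-y'|=O(\ell_n^2)\ll v\ell_n^4$. Second, your claim that there are ``order-$\ell_n^{2d}$ many points at time $k-1-j+\ell_n^2$ reachable from $a$'' because $\G_{j,j+1}^{k-1-j,\cdot}$ controls two consecutive future slices is unsupported: the good event is an abstract conditional-probability statement and gives no lower bound on the infected region of the fixed configuration $\omegas$ (the paper introduces these abstract events precisely because such infected-region estimates were not available); the detour through Theorem~\ref{thm:large_initial} is both unjustified and unnecessary, since $\omegas\in\G_j^{k-1-j,\pi_1(k-1-j)}$ directly gives $\Psl\bigl((k-1-j,\pi_1(k-1-j))\leftrightarrow(k+\ell_n^4,\Z^d)\text{ in }[\omegas,\omegasl,\omegae]_{k-1,k+\ell_n^4}\bigr)\ge 1-e^{-c_{\ref{def:good}}\ell_n^2}$, because that crossing event is $\F_{k+\ell_n^4}$-measurable.

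Third, the measurability issue you flag at the end is not optional bookkeeping but the heart of the lemma, and it is left unresolved: $\omegas$ and $\omegae$ are fixed, so every probabilistic estimate must be over $\omegasl$ alone. Your forward bound is stated ``over the slab environment and the shifted $\omegae$'' and your backward construction ``lives on $\omegasl$ and on $\omegae$''; neither is admissible as written. The paper's resolution is that the forward crossing events depend only on $\omegasl$ (given $\omegas$), while the backward crossing $(k-1,\Z^d)\leftrightarrow(k+\ell_n^4+\ell_n^2,\pi_1(k+\ell_n^2))$ depends only on $\omegasl$ and the $\ell_n^2$ fixed slices of $\omegae$ lying inside the conditioning window of $\GG^{\,\cdot}_{\ell_n^2-1}$, so the conditional bound packaged in $\AA_{n,k}$ transfers to a pure $\Psl$-bound. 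Finally, a quantitative caveat about your union bound: unioning over $(2d+1)^{O(\ell_n^2)}$ local path configurations does not obviously beat error terms of size $e^{-c_{\ref{def:good}}\ell_n^2}$, since both exponents are of order $\ell_n^2$ and $c_{\ref{def:good}}$ is small. The correct accounting is to demand the slab events simultaneously for all (polynomially many) points of the deterministic sets $\I(\omegas)$, $\J(\omegae)$ and of the bottom slab slice, as the paper does, so that no union over paths is ever needed.
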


We postpone the proof of Lemma~\ref{lem:repair} and complete the proof of Lemma \ref{lem:mart_diff} first. Let $\ell_n^2+1 \le k \le n$ and $\omegas \in \A_{n,k-1}$. By using~\eqref{eq:trivial} and~\eqref{eq:OnA_3}, we obtain
\begin{align*}
\Delta_k &= \Esle\left[\log\widetilde N_n([\omegas,\omegae]_k)-\log\widetilde N_n([\omegas,\omegasl,\omegae]_{k-1,k+\ell_n^4})\right]\\ 
&\le 2\ell_n^4 \log (2d+1)\Psle\left([\omegas,\omegae]_k\in \AA_{n,k}, \omegasl\in \AAA_{n,k}(\omegas,\omegae)\right)\\
&\qquad+n \log (2d+1)\Pe\left([\omegas,\omegae]_k\notin \AA_{n, k}\right)\\
&\qquad+n \log (2d+1)\Psle\left(\omegasl\notin \AAA_{n,k}(\omegas,\omegae),[\omegas,\omegae]_k\in \AA_{n,k}\right). 
\end{align*}
The probabilities in the final two lines are bounded by $e^{-c\ell_n^2}$ by Lemmas \ref{lem:AisTypical}(ii) and \ref{lem:repair}, so from our choice \eqref{eq:ell} of $\ell_n$, we can conclude that $\Delta_k\le 3\ell_n^4\log (2d+1)$ for all sufficiently large $n$. 
\end{proof}

\begin{proof}[Proof of Lemma \ref{lem:repair}]
Under the assumption $\omegas\in \A_{n,k-1}$, for any $[\omegas,\omegae]_k$-open path $\pi_1$ from $(0,0)$ to $(n,\Z^d)$, there exists $j\in[\ell_n^2,2\ell_n^2\wedge (k-1)]$ such that $(k-1-j,\pi_1(k-1-j))$ lies in the set
\begin{align}
\I(\omegas):= \left\{(k-1-m,x): m\in [\ell_n^2,2\ell_n^2\wedge (k-1)], x\in [-k,k]^d, \omegas \in G^{k-1-m, x}_m\right\}.
\end{align}
Note that by the definition of $G^{k-1-m, x}_m$ and the union bound, we have
\begin{equation}
 \begin{split}
&\Psl\left(\bigcap_{(k-1-m,x)\in\I(\omegas)}\left\{(k-1-m,x)\leftrightarrow (k+\ell_n^4,\Z^d)\text{ in }[\omegas,\omegasl,\omegae]_{k-1,k+\ell_n^4}\right\}\right)\\
&\quad \ge 1- e^{-c \ell_n^2}
 \end{split}
\label{eq:fw_repair}
\end{equation}
for all sufficiently large $n$. Since this event is $\F_{k+\ell_n^4}$-measurable, this bound is independent of $\omegae$. When $k>n-\ell_n^4$, this event ensures the existence of $\pi_2$ required in~\eqref{eq:def_AAA}. 

Thus we focus on the case $k\le n-\ell_n^4$. Under the assumption $[\omegas,\omegae]_k\in \AA_{n,k}$, for any $[\omegas,\omegae]_k$-open path $\pi_1$ from $(0,0)$ to $(n,\Z^d)$, the point $\pi_1(k+\ell_n^2)$ lies in the set
\begin{align}
\J(\omegae):= \left\{x\in [-k-\ell_n^2,k+\ell_n^2]^d: \omegae \in \GG^{\ell_n^2, x}_{\ell_n^2-1}\right\},
\end{align}
and the same argument as above yields
\begin{equation}
 \begin{split}
&\Psl\left(\bigcap_{y\in\J(\omegae)}\left\{(k-1,\Z^d) \leftrightarrow (k+\ell_n^4+\ell_n^2,y)\text{ in }[\omegas,\omegasl,\omegae]_{k-1,k+\ell_n^4}\right\} \right)\\
&\quad \ge 1- e^{-c\ell_n^2}
 \end{split}
\label{eq:bw_repair}
\end{equation}
for all sufficiently large $n$. 

The above two bounds ensure that for $\pi_1$ and $j$ appearing in the definition of $\A_{n,k-1}$, both $(m,\pi_1(m))_{m=0}^{k-1-j}$ and $(m,\pi_1(m+\ell_n^4))_{m=k+\ell_n^2}^n$ can be extended to an open path that crosses the slab with very high probability. We need one more bound to bridge these two paths. By Theorem~\ref{thm:coupled} and the union bound, we have
\begin{equation}
 \Psl\left(\bigcap_{z\in [-k,k]^d: (0,z)\leftrightarrow (\ell_n^4+1,\Z^d)}\C^{0,z}_{\ell_n^4+1}\right)\ge 1-e^{-c\ell_n^4}
\label{eq:slab_repair}
\end{equation}
for all sufficiently large $n$. Note that the condition $\omegasl\in \C^{0,z}_{\ell_n^4+1}$ implies $[\omegas,\omegasl,\omegae]_{k-1,k+\ell_n^4} \in \C^{k-1,z}_{\ell_n^4+1}$. 
Now if $\omegasl$ is taken from the intersection of the events in~\eqref{eq:fw_repair},~\eqref{eq:bw_repair} and~\eqref{eq:slab_repair}, then for any path $\pi_1$ connecting $(0,0)$ to $(n,\Z^d)$ in $[\omegas,\omegae]_k$, we can find $j\in [\ell_n^2,2\ell_n^2\wedge (k-1)]$, $x' \in [-k,k]^d$ and $ y'\in\Z^d$ such that
\begin{align}
&(k-1-j,\pi_1(k-1-j))\leftrightarrow (k-1,x')\leftrightarrow (k+\ell_n^4,\Z^d), \label{eq:slab1}\\
&(k-1,\Z^d)\leftrightarrow (k+\ell_n^4,y')\leftrightarrow (k+\ell_n^4+\ell_n^2,\pi_1(k+\ell_n^2)),\label{eq:slab2}
\end{align}
and then $\C^{k-1,x'}_{\ell_n^4+1}$ holds, all in the environment $[\omegas,\omegasl,\omegae]_{k-1,k+\ell_n^4}$. Since $|x'-y'|\leq 4\ell_n^2\ll \ell_n^4$, it follows that $(k+\ell_n^4,y')$ lies in the coupled zone for the percolation starting from $(k,x')$ for all sufficiently large $n$, see Figure~\ref{fig:repair}. Then by the definition of the coupled zone together with~\eqref{eq:slab1} and~\eqref{eq:slab2}, we have
\begin{align}
\label{eq:recovery_path}
(k-1-j,\pi_1(k-1-j))\leftrightarrow(k-1,x')\leftrightarrow(k+\ell_n^4,y')\leftrightarrow (k+\ell_n^4+\ell_n^2,\pi_1(k+\ell_n^2)) 
\end{align}
in $[\omegas,\omegasl,\omegae]_{k-1,k+\ell_n^4}$. This ensures the existence of $\pi_2$ required in~\eqref{eq:def_AAA}. 
\end{proof}
\begin{figure}[h]
\includegraphics[width=\textwidth]{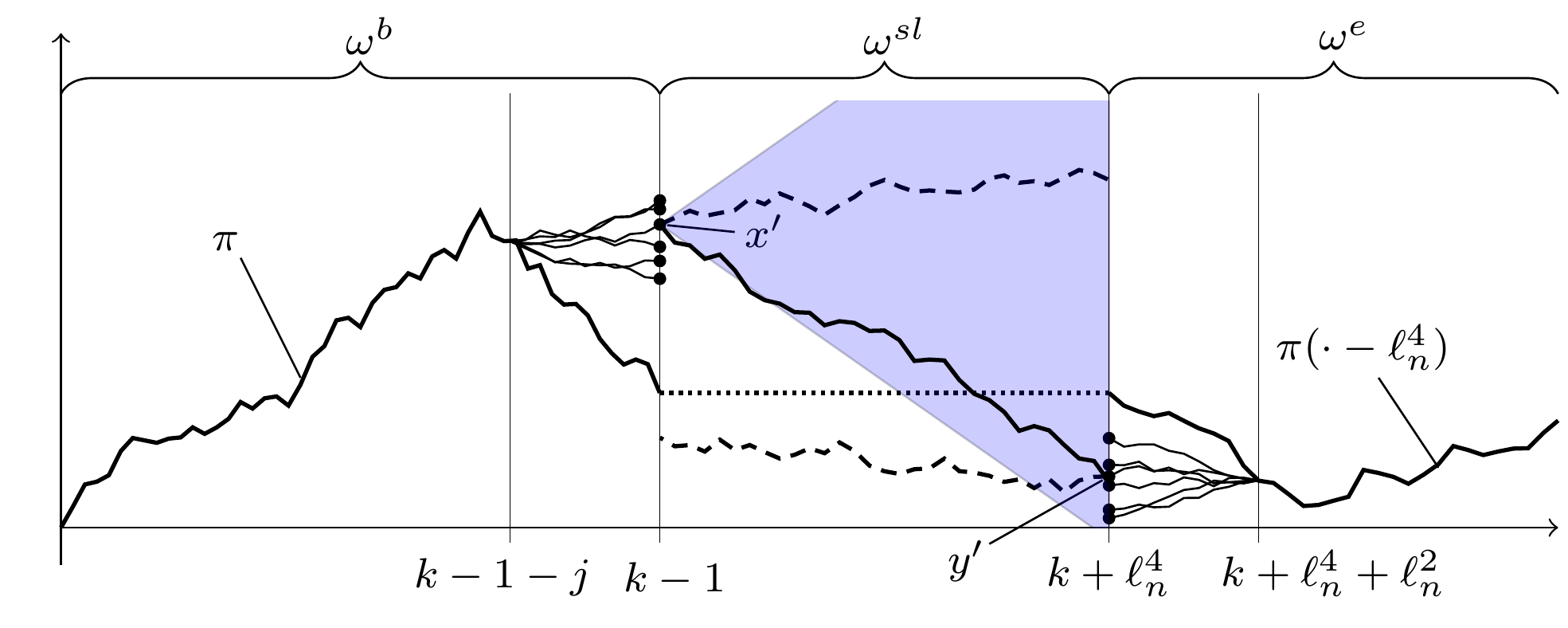} 
\caption{The repairing path for an $[\omegas,\omegae]_k$-open path $\pi$ in $[\omegas,\omegasl,\omegae]_{k-1,k+\ell_n^4}$. The  assumptions of Lemma \ref{lem:repair} intuitively means that there are many alternative paths from $(k-1-j,\pi(k-1-j))$ to level $k-1$ and from $(k+\ell_n^4+\ell_n^2,\pi(k+\ell_n^2))$ to level $k+\ell_n^4$, where $j\approx\ell_n^2$. With very high $\Psl$-probability, those alternative paths are further connected to levels $k+\ell_n^4$, resp. to level $k-1$ (dashed paths), and the percolation cone from level $k-1$ (shaded region) grows in linear speed. By making the slab large enough, we ensure that the percolation cone from $(k-1,x')$ covers $(k+\ell_n^4,y')$. The definition of the coupled zone then guarantees the existence of the repairing path (solid path in $\omegasl$).}
\label{fig:repair}
\end{figure}
\begin{lemma}
\label{lem:mart_diff2}
There exists $c_{\ref{lem:mart_diff2}}$ such that for every $n\in\N$, $\ell_n^2+1\leq k\leq n$ and $\omega \in \A_{n,k-1}$,
\begin{align}
\Delta_k\ge -c_{\ref{lem:mart_diff2}} \ell_n^4\log (2d+1).
\end{align}
\end{lemma}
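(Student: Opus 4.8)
The plan is to prove Lemma~\ref{lem:mart_diff2} by a symmetric version of the argument for Lemma~\ref{lem:mart_diff}, bounding the \emph{reverse} direction of the comparison between $\log\widetilde N_n([\omegas,\omegae]_k)$ and $\log\widetilde N_n([\omegas,\omegasl,\omegae]_{k-1,k+\ell_n^4})$. Recall that we have written
\begin{equation*}
\Delta_k=\Esle\left[\log\widetilde N_n([\omegas,\omegae]_k)-\log\widetilde N_n([\omegas,\omegasl,\omegae]_{k-1,k+\ell_n^4})\right],
\end{equation*}
so it suffices to produce a lower bound, i.e.\ to show that on a high-probability event every $[\omegas,\omegasl,\omegae]_{k-1,k+\ell_n^4}$-open path $\pi_2$ of length $n$ can be mapped injectively-up-to-bounded-multiplicity to an $[\omegas,\omegae]_k$-open path $\pi_1$ of length $n-\ell_n^4$, agreeing with $\pi_2$ outside a window of size $O(\ell_n^2)$ around level $k$. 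This is the mirror image of~\eqref{eq:def_AAA}: now we must \emph{remove} the slab and repair the resulting defect.

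The key steps, in order: (1) Define, for fixed $\omegas$ and $\omegae$, the event $\AAAp_{n,k}(\omegas,\omegae)$ analogous to~\eqref{eq:def_AAA} but with the roles of the two environments exchanged — for every $[\omegas,\omegasl,\omegae]_{k-1,k+\ell_n^4}$-open path $\pi_2$ from $(0,0)$ to $(n,\Z^d)$ there is an $[\omegas,\omegae]_k$-open path $\pi_1$ from $(0,0)$ to $(n-\ell_n^4,\Z^d)$ agreeing with $\pi_2$ on $\{0,\dots,(k-1-2\ell_n^2)_+\}$ and with the shifted tail. (2) Check the counting step: on this event each $[\omegas,\omegae]_k$-open path has at most $(2d+1)^{O(\ell_n^4)}$ pre-images, exactly as in the paragraph following~\eqref{eq:def_AAA}, giving $\log\widetilde N_n([\omegas,\omegasl,\omegae]_{k-1,k+\ell_n^4})\le \log\widetilde N_n([\omegas,\omegae]_k)+c\ell_n^4\log(2d+1)$ on $\AAAp_{n,k}(\omegas,\omegae)$. (3) Prove the analogue of Lemma~\ref{lem:repair}: under $\omegas\in\A_{n,k-1}$ and $[\omegas,\omegae]_k\in\AA_{n,k}$, one has $\Psl(\omegasl\notin\AAAp_{n,k}(\omegas,\omegae))\le e^{-c\ell_n^2}$. (4) Combine with~\eqref{eq:trivial}, Lemma~\ref{lem:AisTypical}(ii) and the choice~\eqref{eq:ell} of $\ell_n$ exactly as at the end of the proof of Lemma~\ref{lem:mart_diff}, to conclude $\Delta_k\ge -c_{\ref{lem:mart_diff2}}\ell_n^4\log(2d+1)$ for large $n$; the trivial bound handles the finitely many small $n$.

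For step~(3), the construction of the repairing path is genuinely the same as in Lemma~\ref{lem:repair}, and in fact slightly easier: given a $[\omegas,\omegasl,\omegae]_{k-1,k+\ell_n^4}$-open path $\pi_2$, the segment of $\pi_2$ before entering the slab ends at some $(k-1,\pi_2(k-1))$ and, using $\omegas\in\A_{n,k-1}$, there is an earlier point $(k-1-j,\pi_2(k-1-j))$ with $j\in[\ell_n^2,2\ell_n^2\wedge(k-1)]$ lying in the good set $\I(\omegas)$; symmetrically the segment after the slab, shifted back by $\ell_n^4$, passes through a backward-good point coming from $[\omegas,\omegae]_k\in\AA_{n,k}$. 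Since $\omegas$ and $\omegae$ (and hence $[\omegas,\omegae]_k$) are fixed and do not involve $\omegasl$ at all in the relevant time-slices $\le k-1$ and $>k$, we can use Theorem~\ref{thm:largefinite}, Theorem~\ref{thm:large_initial}, Theorem~\ref{thm:coupled} and Definition~\ref{def:coupled} in the fixed environment $[\omegas,\omegae]_k$ — restricted to the complement of the slab — to join the good forward point before level $k-1$ to the good backward point after level $k$ through the coupled zone, all with $\Psl$-probability that is actually equal to $1$ minus a quantity controlled by the \emph{fixed} environment; the $\omegasl$-randomness enters only through the requirement that $\pi_2$ itself be open, which we have already assumed. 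This makes the probability bound for $\AAAp_{n,k}(\omegas,\omegae)$ follow from the same estimates~\eqref{eq:fw_repair}--\eqref{eq:slab_repair}, now read in the $[\omegas,\omegae]_k$ environment rather than the slab environment.

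The main obstacle I anticipate is purely bookkeeping: making sure the time-shift of $\ell_n^4$ and the windows of size $O(\ell_n^2)$ around level $k$ are lined up so that the mapped path $\pi_1$ genuinely has length $n-\ell_n^4$ and the multiplicity count in step~(2) is correct in both the generic case $k\le n-\ell_n^4-\ell_n^2$ and the boundary case $k>n-\ell_n^4-\ell_n^2$ (where, as in the proof of Lemma~\ref{lem:mart_diff}, the tail condition is void and a truncated version of the argument applies). Conceptually there is nothing new beyond Lemmas~\ref{lem:mart_diff} and~\ref{lem:repair}; the proof is a transcription with $[\omegas,\omegae]_k$ and $[\omegas,\omegasl,\omegae]_{k-1,k+\ell_n^4}$ interchanged, and one should simply remark this and indicate the few sign/shift changes rather than rewrite it in full.
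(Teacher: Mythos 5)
Your proposal is not the paper's argument, and as formulated it does not work. The paper proves the lower bound by \emph{changing the coupling}, not by reversing the path mapping: it writes
\begin{align*}
-\Delta_k=\Esle\left[\log\widetilde N_n([\omegas,\omegae]_{k-1})-\log\widetilde N_n([\omegas,\omegasl,\omegae]_{k,k+\ell_n^4+1})\right],
\end{align*}
so that the slab is now inserted \emph{after} time $k$ and the mapping still goes from the slab-free environment into an environment containing a freshly sampled slab. The only new feature is that the repairing path must cross the slice $\omegas|_{\{k\}\times\Z^d}$, and this is exactly what the extra index in $\G_{j,j+1}$ in the definition \eqref{eq:def_A} of $\A_{n,k-1}$ is there for. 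You instead keep the coupling of Lemma~\ref{lem:mart_diff} and try to map open paths of $[\omegas,\omegasl,\omegae]_{k-1,k+\ell_n^4}$ into $[\omegas,\omegae]_k$.

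This reversal fails for three concrete reasons. First, the repair now has to be found inside the \emph{fixed} pair $(\omegas,\omegae)$: there is no fresh randomness on the target side, so the existence of the bridging segment is a deterministic property of $(\omegas,\omegae)$ which is not implied by $\omegas\in\A_{n,k-1}$ and $[\omegas,\omegae]_k\in\AA_{n,k}$ --- these events only assert that certain \emph{conditional} probabilities are close to one, and the conditioning refers precisely to the randomness sitting where the slab is, i.e.\ to $\omegasl$. Your claim that Theorems~\ref{thm:coupled} and~\ref{thm:large_initial} can be applied ``in the fixed environment'' with high $\Psl$-probability has no content, since no $\omegasl$-randomness enters the target environment; this is the ``cones may miss each other'' obstruction of Section~\ref{sec:difficulty}, which the slab was introduced to remove and which your direction of repair re-creates (the forward and backward cones now have only $O(\ell_n^2)$ time steps to meet). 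Second, your constraints on $\pi_1$ are typically unsatisfiable: an open path of $[\omegas,\omegasl,\omegae]_{k-1,k+\ell_n^4}$ can have transverse displacement of order $\ell_n^4$ between times $k-1-2\ell_n^2$ and $k+\ell_n^4+\ell_n^2$ (the cluster spreads at positive speed), while the required $\pi_1$ must join the same two spatial points in only $3\ell_n^2+O(1)$ nearest-neighbor steps; once the displacement exceeds the time budget no such path exists in \emph{any} environment, and with probability close to one some open slab path violates the budget, so your reverse repair event does not have probability $1-e^{-c\ell_n^2}$. Third, there is a counting mismatch: your images are paths of length $n-\ell_n^4$, so at best you bound $\widetilde N_n([\omegas,\omegasl,\omegae]_{k-1,k+\ell_n^4})$ by a multiple of $\widetilde N_{n-\ell_n^4}([\omegas,\omegae]_k)$, which does not control $\widetilde N_n([\omegas,\omegae]_k)$ since open paths of length $n-\ell_n^4$ need not extend to time $n$. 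The missing idea is the paper's asymmetric use of the two conditional expectations: for the lower bound one re-represents $\E[\cdot\mid\F_{k-1}]$ as the slab-free environment and $\E[\cdot\mid\F_k]$ as the slab-inserted one, so that the repair is always performed in fresh slab randomness.
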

\begin{proof}
The argument follows almost the same route as the proof of Lemma~\ref{lem:mart_diff}. Thus we only indicate the differences. We write the martingale difference as
\begin{align*}
-\Delta_k &= \Esle\left[\log\widetilde N_n([\omegas,\omegae]_{k-1})-\log\widetilde N_n([\omegas,\omegasl,\omegae]_{k,k+\ell_n^4+1})\right].
\end{align*}
We need an upper bound on this, conditionally on $\omegas\in\A_{n,k-1}$ and $\omegae\in\AA_{n,0}$. As before, we show that with high $\Psl$-probability every $[\omegas,\omegae]_{k-1}$-open path can be mapped to a $[\omegas,\omegasl,\omegae]_{k,k+\ell_n^4+1}$-open repairing path  satisfying the same constraints as in \eqref{eq:def_AAA}. The only difference is that the repairing path has to pass through the environment $\omegas|_{\{k\}\times\Z^d}$, which is not present in $[\omegas,\omegae]_{k-1}$. But the definition \eqref{eq:def_A} of $\A_{n,k-1}$ already ensures that $\omegas|_{\{k\}\times\Z^d}$ does not cut off too many open paths. More precisely, conditionally on $\omegas\in\A_{n,k-1}$ and $\omegae\in\AA_{n,0}$, for any $[\omegas,\omegae]_{k-1}$-open path $\pi_1$, we can find $j\in[\ell_n^2,2\ell_n^2]$ such that 
\begin{align*}
\omegas\in\G_{j+1}^{k-1-j,\pi_1(k-1-j)}\quad\text{ and }\quad\omegae\in\GG_{\ell_n^2}^{\ell_n^2,\pi_1(k+1+\ell_n^2)}.
\end{align*}
Now the same argument as in Lemma \ref{lem:repair} gives a lower bound on the $\Psl$-probability that repairing paths exist. 
\end{proof}
\begin{proof}
[Proof of Lemma~\ref{lem:conc_1}]
Applying Theorem~\ref{thm:Burkholder} to our martingale \eqref{eq:M_n}, we find that on $\omega\in\GGG_{\ell_n^2}$,
\begin{align}
\label{eq:Bapplied}
\E\left[\left|\log\widetilde N_n-\E[\log\widetilde N_n\mid \F_{\ell_n^2}]\right|^{2q}\cond \F_{\ell_n^2}\right]
\le c_{\cref{thm:Burkholder}}\E\left[\left(\sum_{k=\ell_n^2+1}^n\Delta_k^2\right)^q\cond\F_{\ell_n^2}\right].
\end{align} 
First, on the event $\bigcap_{k=\ell_n^2+1}^n \A_{n,k-1}$, we use Lemmas~\ref{lem:mart_diff} and~\ref{lem:mart_diff2} to get, on $\GGG_{\ell_n^2}$,
\begin{equation*}
\label{eq:on_well_conn}
\E\left[\left(\sum_{k=\ell_n^2+1}^n\Delta_k^2\right)^q \1_{\bigcap_{k={\ell_n^2+1}}^n \A_{n,k-1}} \cond\F_{\ell_n^2}\right] \le c n^q \ell_n^{8q}.
\end{equation*}
Second, on the complementary event $\bigcup_{k=\ell_n^2+1}^n(\A_{n,k-1})^c$, we use~\eqref{eq:trivial}, Lemma~\ref{lem:AisTypical}(i) and the union bound to get, on $\omega\in\GGG_{\ell_n^2}$,  
\begin{equation*}
\label{eq:outside_well_conn}
\begin{split}
\E\left[\left(\sum_{k=\ell_n^2+1}^n\Delta_k^2\right)^q{\1_{\bigcup_{k=\ell_n^2+1}^n (\A_{n,k})^c}}\cond \F_{\ell_n^2}\right]\le c n^{3q+1}e^{-c_{\cref{lem:AisTypical}}\ell_n^2}.
\end{split}
\end{equation*}
Since we choose $\ell_n \gg \log n$, this is negligible compared to the previous case. 

Substituting these bounds to~\eqref{eq:Bapplied} and using the Markov inequality, we can obtain the desired bound. 
\end{proof}

\subsubsection{Getting rid of the conditioning at zero temperature}
In this section, we prove Lemma~\ref{lem:conc_2}.
\begin{proof}[Proof of Lemma \ref{lem:conc_2}]
Using \eqref{eq:trivial}, Lemma \ref{lem:good} and Definition \ref{def:good}, we see that for all $n$ large enough,
\begin{align*}
\left|\E[\log\widetilde N_n\mid \GGG_{\ell_n^2}]-\E[\log\widetilde N_n\mid (0,0)\leftrightarrow(n,\Z^d)]\right|\leq 1.
\end{align*}
Let $(\omegas,\Ps)$ and $(\omegass,\Pss)$ denote independent copies of $(\omega,\P)$. It is now enough to show that for all $\omegas,\omegass\in \GGG_{\ell_n^2}$ and all $n$ large enough,
\begin{align}\label{eq:omega1omega3}
\left|\E[\log \widetilde N_n\mid\F_{\ell_n^2}](\omegas)-\E[\log \widetilde N_n\mid\F_{\ell_n^2}](\omegass)
\right|\leq 4\ell_n^4\log (2d+1).
\end{align}
This can be proved in a similar way to Lemma~\ref{lem:mart_diff}. The trick is to introduce additional independent copies $(\omegae,\Pe)$ and $(\omegasl,\Psl)$ and write
\begin{align*}
\E[\log \widetilde N_n\mid\F_{\ell_n^2}](\omegas)&=\Esle[\log \widetilde N_n([\omegas,\omegae]_{\ell_n^2})]\\
\E[\log \widetilde N_n\mid\F_{\ell_n^2}](\omegass)&=\Esle[\log \widetilde N_n([\omegass,\omegasl,\omegae]_{\ell_n^2,\ell_n^4+\ell_n^2})].
\end{align*}
We consider the event that every $[\omegas,\omegae]_{\ell_n^2}$-open path $\pi_1$ can be mapped onto a $[\omegass,\omegasl,\omegae]_{\ell_n^2,\ell_n^2+\ell_n^4}$-open path $\pi_2$ satisfying similar constraints as in \eqref{eq:def_AAA}. A straightforward modification of Lemma \ref{lem:repair} shows that this event has probability at least $1-e^{-c\ell_n^2}$ on $\omegas,\omegass\in\GGG_{\ell_n^2}$ and $\omegae\in\AA_{n,0}$. 
Moreover, since the multiplicity of the mapping is small, we have
\begin{equation}
\log\widetilde N_n([\omegas,\omegae]_{\ell_n^2})-\log\widetilde N_n([\omegass,\omegasl,\omegae]_{\ell_n^2,\ell_n^2+\ell_n^4})\le 2\ell_n^4\log (2d+1)
\end{equation}
for all $n$ large enough. By taking expectation over $(\omegasl,\omegae)$ and using Lemma \ref{lem:AisTypical}(ii) and \eqref{eq:trivial}, we obtain~\eqref{eq:omega1omega3} without the absolute value sign. Since the other bound follows by symmetry, we are done. 
\end{proof}

\subsection{Concentration inequality in positive temperature}
\label{sec:conc_positive}

In order to prove the continuity of $\f(\beta,p)$ at $\beta=\infty$, we need a concentration around the mean for $\log \widetilde{Z}^\beta_n$ uniformly in $\beta$, where $\widetilde{Z}^\beta_n:=Z^\beta_n+\1_{\{(0,0)\not\leftrightarrow(n,\Z^d)\}}$. The general idea of the proof is similar to the case $\beta=\infty$: we use the inserting slab trick to repair a path in order to bound the martingale differences. 

However, there is one point where we need a non-trivial modification. In the case $\beta=\infty$, we only needed to repair open paths and it made the proof of Lemma~\ref{lem:AisTypical} simple. For $\beta<\infty$, we need to repair non-open paths and this prevent us from using the existing results for oriented percolation, such as~Theorem~\ref{thm:largefinite}. More precisely, we need to find a repairing path as in \eqref{eq:recovery_path}, but in positive temperature, the path $\pi_1$ is not necessarily open. Then there is no guarantee that the start and end points $(k-1-j,\pi_1(k-1-j))$ and $(k+\ell_n^2,\pi_1(k+\ell_n^2))$ are percolating. At zero temperature, this was a crucial ingredient to ensure that many alternative paths are available.

We will deal with this issue by changing the definition of $\A_{n,k}$, $\AA_{n,k}$ and $\AAA_{n,k}$. The basic idea is that, if a path goes through many closed sites, then a repairing path is allowed to go through the same number of closed sites and thus easier to find. 
To make this precise, we introduce the notation $H_I(\omega,\pi):=\sum_{i\in I}\omega(i,\pi(i))$ for the energy (recall \eqref{eq:H_n}) restricted to $I\subseteq\Z_+$ and define the following events for $\ell_n^2\leq k\leq n $:
\begin{align}
\label{eq:A'_1}
\Ap_{n,k}&=\left\{\omega \colon 
\begin{array}{l}
\text{For any path }\pi\text{ from } (0,0)\text{ to } (n,\Z^d),\text{ either }\omega\in\G_{j,j+1}^{k-j,\pi(k-j)}
\\
\text{for some }j \in [\ell_n^2,2\ell_n^2\wedge k]\text{ or }H_{(k-2\ell_n^2,k-\ell_n^2]}(\pi)\ge \ell_n\text{ and }\\
\omega\in\G_{2\ell_n^2-\ell_n,2\ell_n^2-\ell_n+1}^{k-2\ell_n^2+\ell_n,x}
\text{ for some }x\in \pi(k-2\ell_n^2)+[-\ell_n,\ell_n]^d.
\end{array}
\right\},\\
\label{eq:A'_2}
\AAp_{n,k}&=\left\{\omega \colon
\begin{array}{l}
\text{For any path }\pi\text{ from } (k,[-n,n]^d)\text{ to } (k+2\ell_n^2,\Z^d),\text{ either }\\
\omega\in \GG_{j-1}^{k+j,\pi(k+j)}\text{ for some }j\in[\ell_n^2,2\ell_n^2]\text{ or }H_{(k+\ell_n^2,k+2\ell_n^2]}(\omega,\pi)\ge \ell_n\\
\text{and }\omega\in\GG_{2\ell_n^2-\ell_n-1}^{k+2\ell_n^2-\ell_n,x}\text{ for some }x\in \pi(k+2\ell_n^2)+[-\ell_n,\ell_n]^d.
\end{array}
\right\}, 
\end{align}
where in the case $k < 2\ell_n^2$, we drop the second condition in~\eqref{eq:A'_1}. Note that if $\omega\in G_j^{k-j,\pi(k-j)}$ for some $j\in[\ell_n^2,2\ell_n^2\wedge k]$, then we can expect an open repairing path starting from $(k-j,\pi(k-j))$ as in the zero temperature case. The event $\Ap_{n,k}$ ensures that if there is no such $j$, then $\pi$ visits many closed sites and we can expect that an open repairing path starts close by. In either case, the repairing path has lower energy and hence the cost of switching from $\pi$ is independent of $\beta$. Next, for fixed environments $\omegas$ and $\omegae$ and $\ell_n^2+1\leq k\leq n$, let
\begin{align}
\AAAp_{n,k}(\omegas,\omegae)
=\left\{\omegasl \colon
\begin{array}{l}
\text{For any path $\pi$ from }(0,0)\text{ to }(n,\Z^d), \text{ there }\\
\text{exists a path }\pi'\text{ from }(0,0)\text{ to }(n,\Z^d)\text{ such that }\\
H_n([\omegas,\omegasl,\omegae]_{k-1,k+\ell_n^4},\pi')\le H_n([\omegas,\omegae]_k,\pi),\\
\pi(j)=\pi'(j)\text{ for all } j\le (k-1-2\ell_n^2)_+
\text{ and }\\
\pi(j)=\pi'(j+\ell_n^4)\text{ for all } k+\ell_n^2\leq j\leq n-\ell_n^4.
\end{array}
\right\}.\label{eq:A'_3}
\end{align}
When $\omegasl\in \AAAp_{n,k}(\omegas,\omegae)$, any path $\pi$ in $[\omegas,\omegae]_k$ can be mapped to another path $\pi'$ that has a lower energy in $[\omegas,\omegasl,\omegae]_{k-1,k+\ell_n^4}$. In addition, the mapping $\pi\mapsto \pi'$ satisfies the same bound on the number of preimages as in~\eqref{eq:OnA_3}. As a result, for $\omegasl\in \AAAp_{n,k}(\omegas,\omegae)$, we have
\begin{equation}
\label{eq:OnA_3'}
\log \widetilde{Z}_n^\beta([\omegas,\omegae]_k)- \log \widetilde{Z}_n^\beta([\omegas,\omegasl,\omegae]_{k-1,k+\ell_n^4}) \le 2\ell_n^4 \log (2d+1)
\end{equation}
just as in the case $\beta=\infty$. The corresponding lower bound can be obtained by the same modification as in Lemma~\ref{lem:mart_diff2}. Therefore, in order to obtain the conditional concentration as in Lemma~\ref{lem:conc_1}, it remains to prove the following two lemmas which replace Lemmas~\ref{lem:AisTypical} and~\ref{lem:repair}. 
\begin{lemma}
\label{lem:A'isTypical}
For any $\epsilon>0$, there exists $c_{\cref{lem:A'isTypical}}>0$ such that for any $p> \pcr+\epsilon$, $n\in\N$, $\omega\in\GGG_{\ell_n^2}$ and $\ell_n^2\leq k\leq n$,
\begin{align}
\label{eq:boundA'_1}
&\P\left(\Ap_{n,k}\cond\F_{\ell_n^2}\right)(\omega) \ge 1-e^{-c_{\cref{lem:A'isTypical}}\ell_n},
\end{align}
and for any $k\le n$ and $\omegas$, 
\begin{align}
\label{eq:boundA'_2}
&\Pe\left([\omegas,\omegae]_k\in\AAp_{n,k}\right) \ge 1-e^{-c_{\cref{lem:A'isTypical}}\ell_n}.
\end{align}
\end{lemma}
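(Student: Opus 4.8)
The plan is to mimic the proof of Lemma~\ref{lem:AisTypical}, with the new feature that the ``energy'' clauses in \eqref{eq:A'_1} and \eqref{eq:A'_2} must be handled. I would treat the two displays \eqref{eq:boundA'_1} and \eqref{eq:boundA'_2} symmetrically, so let me describe \eqref{eq:boundA'_1}. For a path $\pi$ from $(0,0)$ to $(n,\Z^d)$, let $j(\pi)\in[\ell_n^2,2\ell_n^2\wedge k]$ be any time for which $(k-j,\pi(k-j))$ is a candidate. The first observation is that if $\omega\notin\G_{j,j+1}^{k-j,\pi(k-j)}$ for \emph{every} such $j$, then in particular $(k-2\ell_n^2,\pi(k-2\ell_n^2))$ is a point from which percolation is ``bad'', so by Theorem~\ref{thm:largefinite} (applied with the backward time-slice fixed) the segment of $\pi$ on $(k-2\ell_n^2,k-\ell_n^2]$ is unlikely to stay inside a finite cluster; this forces $H_{(k-2\ell_n^2,k-\ell_n^2]}(\pi)\ge\ell_n$ on a high-probability event, which is exactly the first part of the second clause. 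Once we are on the event $\{H_{(k-2\ell_n^2,k-\ell_n^2]}(\pi)\ge\ell_n\}$, we need some $x\in\pi(k-2\ell_n^2)+[-\ell_n,\ell_n]^d$ with $\omega\in\G_{2\ell_n^2-\ell_n,2\ell_n^2-\ell_n+1}^{k-2\ell_n^2+\ell_n,x}$: here the point is that there are at most $(2\ell_n+1)^d$ such $x$, but among the sites that $\pi$ touches in the window $(k-2\ell_n^2,k-2\ell_n^2+\ell_n]$ — of which there are $\ell_n$ in time, reaching distance $\le\ell_n$ from $\pi(k-2\ell_n^2)$ — at least one is open (since $\pi$ has $\ge\ell_n$ closed sites only on the \emph{later} window, or by a pigeonhole between closed sites), and from an open site the event $\G$ fails with probability $\le e^{-c\ell_n}$ by Lemma~\ref{lem:good}.

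More carefully, I would organize the argument as follows. First, reduce to a deterministic statement about paths: the number of distinct vertices $(k-j,y)$ with $j\in[\ell_n^2,2\ell_n^2]$ and $y\in[-k,k]^d$ that can possibly appear as $(k-j,\pi(k-j))$ for some path $\pi$ is at most $\ell_n^2(2k+1)^d \le \mathrm{poly}(n)$, and likewise the set of ``anchor'' vertices $(k-2\ell_n^2+\ell_n,x)$ with $x\in y+[-\ell_n,\ell_n]^d$ has polynomially many elements. Second, for each fixed such vertex $v$, the probability that percolation from $v$ is bad, i.e.\ $\P(v\leftrightarrow\infty\mid \F_{[\cdot,\cdot]})< 1-e^{-c_{\cref{def:good}}\ell_n}$, is at most $e^{-c\ell_n}$ by the same computation as in Lemma~\ref{lem:good}(i)--(ii) (using $l\le 3k$ there, which holds since $2\ell_n^2-\ell_n\le 3\ell_n^2$). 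Third, for a vertex $v=(k-2\ell_n^2,y)$ that is ``bad'' and has $(k-2\ell_n^2,\Z^d)$ still connected to $(k-\ell_n^2,\Z^d)$, Theorem~\ref{thm:largefinite} gives probability $\le e^{-c\ell_n^2}$ of a finite cluster reaching time $k-\ell_n^2$; on the complement, \emph{every} path $\pi$ through $v$ that is open on $(k-2\ell_n^2,k-\ell_n^2]$ would give an infinite (hence time-$(k-\ell_n^2)$-reaching) open cluster from $v$, contradiction, so such $\pi$ must have a closed site, and iterating/pigeonholing forces $H_{(k-2\ell_n^2,k-\ell_n^2]}(\pi)\ge\ell_n$. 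Fourth, on $\{H_{(k-2\ell_n^2,k-\ell_n^2]}(\pi)\ge\ell_n\}$, a counting argument shows $\pi$ must pass through an open vertex of the form $(k-2\ell_n^2+\ell_n,x)$ with $x\in\pi(k-2\ell_n^2)+[-\ell_n,\ell_n]^d$ (there are only $\ell_n$ slices but $\ge\ell_n$ closed sites cannot all be packed without an open one nearby, or simply: the first open vertex after the $\ell_n$-th closed one lies in this box), and by Step~2 the $\G$-event at that vertex holds except on probability $e^{-c\ell_n}$. Finally, union-bound over the polynomially many vertices and over the two clauses; since $\ell_n=\lfloor(\log n)^2\rfloor$, the polynomial factors are absorbed and we get the bound $1-e^{-c_{\cref{lem:A'isTypical}}\ell_n}$.

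The bound \eqref{eq:boundA'_2} for $\AAp_{n,k}$ is proved by the identical argument run backwards in time: the role of Theorem~\ref{thm:largefinite} is played by its time-reversed version (which holds by symmetry of the model under $t\mapsto -t$), and the conditioning on $\F_{\ell_n^2}$ in \eqref{eq:boundA'_1} is replaced by the conditioning on $\omegas$ built into the environment $[\omegas,\omegae]_k$; since $\AAp_{n,k}$ is $\F_{[k,k+2\ell_n^2]}$-measurable (up to the slice at $k$, which is part of $\omegas$) and the relevant percolation events live after time $k$, the bound is uniform in $\omegas$. One subtlety: in \eqref{eq:A'_2} the ``bad'' vertices may start from closed sites of $\omegas$ at time $k$, but this does not affect the backward percolation estimates because by our convention the smallest-time vertex of a backward open path is allowed to be closed. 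I would spell this out once and then say ``the second assertion follows in the same way.''

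The step I expect to be the main obstacle is the deterministic pigeonhole in the fourth step: one must argue that \emph{whenever} a path $\pi$ accumulates $\ell_n$ closed sites in the window $(k-2\ell_n^2,k-\ell_n^2]$ — which lies entirely to the right of $k-2\ell_n^2$, whereas the anchor box $\pi(k-2\ell_n^2)+[-\ell_n,\ell_n]^d$ is centered at the \emph{left} endpoint of that window — there is nonetheless a vertex $(k-2\ell_n^2+\ell_n,x)$ on $\pi$, inside that box, at which the forward percolation is good. The clean way is: the box is reached by $\pi$ within $\ell_n$ steps, and among the $\ell_n+1$ vertices $\pi(k-2\ell_n^2),\dots,\pi(k-2\ell_n^2+\ell_n)$ at least one is open (else $H$ on this sub-window is already $\ge\ell_n$ and then we are done by a different case split) — the bookkeeping of which sub-window carries the closed sites and how that interacts with the definition's ranges needs to be done carefully and matched to the exact index ranges written in \eqref{eq:A'_1}. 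Everything else is a routine union bound over $\mathrm{poly}(n)$ events each of probability $e^{-\Omega(\ell_n)}$, exactly as in Lemma~\ref{lem:AisTypical}.
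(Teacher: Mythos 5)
Your outline has the right flavor (case split on the path's energy in the window, pigeonhole for a long open stretch, union bounds over polynomially many vertices, with $\ell_n=\lfloor(\log n)^2\rfloor$ absorbing the polynomial factors), but the probabilistic estimate on which your Step 2 and your treatment of the second clause of \eqref{eq:A'_1} rest is false. For a \emph{fixed} vertex $v$ — open or not — the goodness event of Definition~\ref{def:good} fails with probability bounded away from zero (at least the probability that $v$ does not percolate); Lemma~\ref{lem:good}(i) only bounds the probability of being bad \emph{and} surviving $k$ further levels, and the openness of the single site $v$ provides no useful conditioning. Hence your plan for the high-energy case — locate an open vertex of $\pi$ in the box $\pi(k-2\ell_n^2)+[-\ell_n,\ell_n]^d$ and assert that ``from an open site the event $\G$ fails with probability $\le e^{-c\ell_n}$'' — cannot work, and no on-path choice of anchor can: a single vertex is simply not good with overwhelming probability. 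The paper's proof instead invokes Theorem~\ref{thm:large_initial}, which your proposal never uses and which is the essential ingredient here: with probability at least $1-e^{-c\ell_n}$ the \emph{entire} box $x+[-\ell_n,\ell_n]^d$ at time $k-2\ell_n^2+\ell_n$ contains a percolating, hence (by Lemma~\ref{lem:good}(i) and a union bound over its $O(\ell_n^d)$ sites) good vertex, simultaneously for all $x\in[-n,n]^d$ (event \eqref{eq:or}); the anchor need be neither open nor on $\pi$, which is exactly why the definition of $\Ap_{n,k}$ only asks for \emph{some} $x$ in the box.

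The first-clause half also needs repair. The correct device is the event \eqref{eq:either}: every vertex in the window that is still connected $\ell_n$ levels forward is good, obtained from Theorem~\ref{thm:largefinite} plus Lemma~\ref{lem:good}(i) and a union bound; then few closed sites force an $\ell_n$-long open stretch, whose starting point survives $\ell_n$ steps and is therefore good. Your contrapositive version infers $H_{(k-2\ell_n^2,k-\ell_n^2]}(\pi)\ge\ell_n$ from badness of the single vertex $(k-2\ell_n^2,\pi(k-2\ell_n^2))$ together with a slab-to-slab connectivity event $(k-2\ell_n^2,\Z^d)\leftrightarrow(k-\ell_n^2,\Z^d)$; as written this forces at most one closed site on $\pi$, and the ``iterating/pigeonholing'' needs the survive-implies-good dichotomy at \emph{every} vertex of the window, which is precisely what \eqref{eq:either} encodes and what your Steps 2--3 misstate. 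Finally, you do not treat the range $\ell_n^2\le k\le 3\ell_n^2$, where the paper takes $j=k$ in \eqref{eq:A'_1} and uses the hypothesis $\omega\in\GGG_{\ell_n^2}$ through Lemma~\ref{lem:good}(ii); this is the only place the conditioning on $\F_{\ell_n^2}$ genuinely enters, and your write-up never uses $\GGG_{\ell_n^2}$ at all.
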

The error terms in \eqref{eq:boundA'_1} and \eqref{eq:boundA_2} are worse than those in \eqref{eq:boundA'_1} and \eqref{eq:boundA'_2}, but since we take $\ell_n=\lfloor(\log n)^2\rfloor$, they decay faster than any power of $n$, which is enough for our arguments.
\begin{lemma}
\label{lem:repair'}
For any $\epsilon>0$, there exists $c_{\cref{lem:repair'}}>0$ such that for any $p> \pcr+\epsilon$, $n\in\N$, $\ell_n^2+1\le k \le n$, $\omegas\in \Ap_{n,k-1}$ and $\omegae$ such that $[\omegas,\omegae]_k\in \AAp_{n, k}$, 
\begin{equation}
 \Pe\left(\omegasl\notin \AAAp_{n,k}(\omegas,\omegae)\right) \le e^{-c_{\cref{lem:repair'}}\ell_n}
\end{equation} 
\end{lemma}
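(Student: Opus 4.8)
The plan is to mimic the proof of Lemma~\ref{lem:repair} but track energies instead of just connectivity. Fix $\omegas\in \Ap_{n,k-1}$ and $\omegae$ with $[\omegas,\omegae]_k\in \AAp_{n,k}$, and consider the case $k\le n-\ell_n^4$ (the case $k>n-\ell_n^4$ being simpler since the tail constraint in~\eqref{eq:A'_3} is void). Given any path $\pi$ from $(0,0)$ to $(n,\Z^d)$, the event $\Ap_{n,k-1}$ provides one of two scenarios near time $k-1$: either there is $j\in[\ell_n^2,2\ell_n^2\wedge(k-1)]$ with $\omegas\in\G_{j,j+1}^{k-1-j,\pi(k-1-j)}$, exactly as at zero temperature, or $\pi$ accumulates energy $H_{(k-1-2\ell_n^2,k-1-\ell_n^2]}(\pi)\ge \ell_n$ and there is $x\in\pi(k-1-2\ell_n^2)+[-\ell_n,\ell_n]^d$ with $\omegas\in\G^{\,k-1-2\ell_n^2+\ell_n,x}_{2\ell_n^2-\ell_n,2\ell_n^2-\ell_n+1}$. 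Symmetrically, $[\omegas,\omegae]_k\in\AAp_{n,k}$ provides a good forward-percolation starting point on the far side of the slab, either $\pi(k+\ell_n^2)$ itself lies in a set where $\GG$ holds, or $\pi$ has energy $\ge\ell_n$ on $(k+\ell_n^2,k+2\ell_n^2]$ and a nearby point (within $\ell_n$) is a good backward starting point.

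Next I would run the three high-probability $\Psl$-events from the proof of Lemma~\ref{lem:repair} verbatim: first, that every good forward point in the relevant finite collection $\I(\omegas)$ (now also including the shifted points $x\in\pi(k-1-2\ell_n^2)+[-\ell_n,\ell_n]^d$ arising from the energy scenario) connects across the slab to level $k+\ell_n^4$; second, the backward analogue for the collection $\J(\omegae)$ (enlarged similarly); third, the coupled-zone event~\eqref{eq:slab_repair} from Theorem~\ref{thm:coupled}. Each holds with $\Psl$-probability at least $1-e^{-c\ell_n^2}$ by Lemma~\ref{lem:good}, the definitions of $\G,\GG$, and the union bound, so their intersection has probability at least $1-e^{-c\ell_n}$. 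On this intersection I would construct $\pi'$ exactly as in~\eqref{eq:recovery_path}: run along $\pi$ up to the good forward point, follow the guaranteed open connection into the slab, cross the slab using the coupled zone to reach the good backward point at level $k+\ell_n^4$, then continue along the shifted copy of $\pi$. The segments of $\pi'$ inside the slab and in the detours near time $k$ can be chosen open (this is exactly what the open connections provide), so they contribute zero energy; outside those controlled windows $\pi'$ coincides with (a shift of) $\pi$, so those contributions are identical.

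The one genuinely new point is the energy bookkeeping near time $k$, and this is where the energy hypotheses in~\eqref{eq:A'_1}--\eqref{eq:A'_2} pay off. In the ``good point'' scenario the detour replaces the portion of $\pi$ on a window of length $O(\ell_n^2)$ by an open path, so $\pi'$ has energy at most that of $\pi$ on that window, i.e.\ no increase. In the ``energy'' scenario the detour window has length $O(\ell_n^2)$ but starts from a point up to $\ell_n$ away from $\pi$, so to rejoin we may have to traverse up to $\ell_n$ extra sites whose environment is uncontrolled; however $\pi$ itself paid at least $\ell_n$ units of energy on $(k-1-2\ell_n^2,k-1-\ell_n^2]$ (resp.\ $(k+\ell_n^2,k+2\ell_n^2]$), which dominates the at most $\ell_n$ units the detour could cost, so again $H_n([\omegas,\omegasl,\omegae]_{k-1,k+\ell_n^4},\pi')\le H_n([\omegas,\omegae]_k,\pi)$ as required by~\eqref{eq:A'_3}. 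The preimage-counting bound in~\eqref{eq:A'_3} is identical to the zero-temperature case since the matching constraints on $\pi$ and $\pi'$ are the same. I expect this energy-accounting step to be the main obstacle: one must choose the detour so that the extra uncontrolled sites it visits number at most $\ell_n$, which is why the window lengths ($2\ell_n^2$ versus $\ell_n^2$) and the spatial radius $\ell_n$ in the definitions of $\Ap_{n,k}$ and $\AAp_{n,k}$ are calibrated exactly as they are; getting these inequalities to close uniformly in $\beta$ (and noting that $\beta$ never enters because the energy only decreases) is the crux.
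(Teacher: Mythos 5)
Your proposal follows essentially the same route as the paper's proof: the two-scenario case split provided by $\Ap_{n,k-1}$ and $\AAp_{n,k}$, reuse of the slab-crossing events \eqref{eq:fw_repair}--\eqref{eq:slab_repair} from Lemma~\ref{lem:repair} with the collections of good points enlarged to include the nearby sites from the energy scenario, and the key energy bookkeeping in which the at most $\ell_n$ uncontrolled sites of the detour are dominated by the at least $\ell_n$ closed sites that $\pi$ visits on the corresponding window, so the repaired path's energy does not increase and the bound is $\beta$-independent. This matches the paper's construction of $\pi_1$, $\pi_2$ and the repaired path $\pi'$, including the unchanged preimage count, so the argument is correct and essentially identical.
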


\begin{figure}
\includegraphics[width=.8\textwidth]{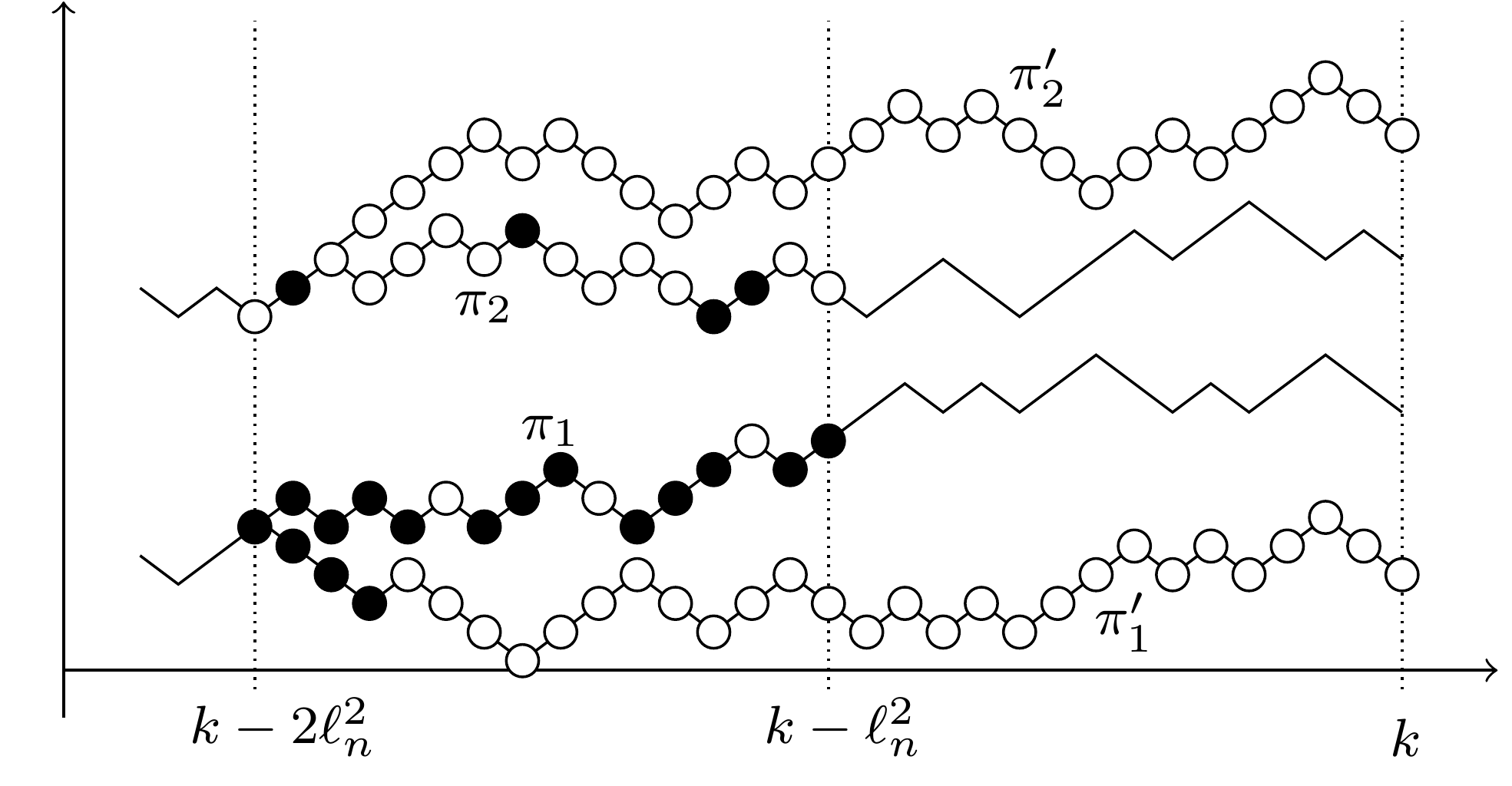}
\caption{The forward connection event $\Ap_{n,k}$ in positive temperature (empty/filled circles indicate open/closed sites). If a path visits many closed sites (like $\pi_1$ above), we can find an open path $\pi_1'$ close by and the cost of switching to $\pi_1'$ is independent of $\beta$. Otherwise the path must have a long subsequence where it only visits open sites (like $\pi_2$ above) and we can expect that an open path $\pi_2'$ branches off. Once $\pi_1'$ and $\pi_2'$ are found the abstract goodness condition has high probability.} \label{fig:positive} 
\end{figure} 

\begin{proof}
[Proof of Lemma~\ref{lem:A'isTypical}]
We refer to Figure \ref{fig:positive} for an illustration of the main idea. 
For $k\le 3\ell_n^2$, we take $j=k$ in \eqref{eq:A'_1} and apply Lemma \ref{lem:good}(ii) just as in the proof of Lemma~\ref{lem:AisTypical}. In particular, since we use only the first condition in~\eqref{eq:A'_1}, whether $k\ge 2\ell_n^2$ or not is irrelevant. 

For the case $k>3\ell_n^2$, note that by Theorem \ref{thm:largefinite} and Lemma \ref{lem:good}(i), for every $j\geq \ell_n$,
\begin{align*}
\P\left(\{(0,0)\leftrightarrow(\ell_n,\Z^d)\}\setminus \G_{j,j+1}\right)\le &\P\left((0,0)\leftrightarrow(\ell_n,\Z^d),(0,0)\nleftrightarrow\infty\right)\\
&+\P\left(\{(0,0)\leftrightarrow\infty\}\setminus \G_{j,j+1}\right)\\
\leq &e^{-c\ell_n}.
\end{align*}
Thus the union bound shows that with probability at least $1-e^{-c\ell_n}$, 
\begin{equation}
\label{eq:either}
\begin{split}
& \text{every }(t,x)\in[k-2\ell_n^2,k-\ell_n^2-\ell_n]\times[-n,n]^d \text{ satisfies}\\ 
& \text{either }\omega\in \G_{k-t,k-t+1}^{t,x}
\text{ or } (t,x)\nleftrightarrow (t+\ell_n,\Z^d).
\end{split}
\end{equation}
Note that \eqref{eq:either} is independent of $\F_{\ell_n^2}$. In addition, by Lemma \ref{lem:good}(i) and Theorem \ref{thm:large_initial},
\begin{align*}
&\P\left(\omega\notin \G_{2\ell_n^2-\ell_n,2\ell_n^2-\ell_n+1}^{0,x}
\text{ for every }x\in[-\ell_n,\ell_n]^d\right)\\
&\quad \le \P\left([-\ell_n,\ell_n]^d\nleftrightarrow \infty\right)+3^d\ell_n^d\P\left(\{(0,0)\leftrightarrow\infty\}\setminus \G_{2\ell_n^2-\ell_n,2\ell_n^2-\ell_n+1}
\right)\\
& \quad \le e^{-c\ell_n}.
\end{align*}
Using another union bound, with probability at least $1-e^{-c\ell_n}$, 
\begin{align}\label{eq:or}
\text{for every }x\in[-n,n]^d, 
\text{ there exists }x'\in x+[-\ell_n,\ell_n]^d\text{ with }\omega\in \G_{2\ell_n^2-\ell_n,2\ell_n^2-\ell_n+1}^{k-2\ell_n^2+\ell_n,x'}
.
\end{align}
Note that \eqref{eq:either}, resp. \eqref{eq:or}, guarantees the existence of repairing paths like $\pi_2'$, resp. $\pi_1'$, in Figure \ref{fig:positive}. We assume both~\eqref{eq:either} and~\eqref{eq:or} hold and let $\pi$ denote any path. If
\begin{align*}
H_{(k-2\ell_n^2,k-\ell_n^2]}(\omega,\pi)\leq \ell_n,
\end{align*}
then there exists $j\in[\ell_n^2+\ell_n,2\ell_n^2]$ such that $H_{(k-j,k-j+\ell_n]}(\omega,\pi)=0$. By \eqref{eq:either}, we therefore have $\omega\in G_{j,j+1}^{k-j,\pi(k-j)}$
as desired. If, on the other hand,
\begin{align*}
H_{(k-2\ell_n^2,k-\ell_n^2]}(\omega,\pi)>\ell_n,
\end{align*}
then \eqref{eq:or} implies that $\G_{2\ell_n^2-\ell_n,2\ell_n^2-\ell_n+1}^{k-2\ell_n^2+\ell_n,x'}$ 
holds for some $x'\in\pi(k-2\ell_n^2)+[-\ell_n,\ell_n]^d$. The proof of~\eqref{eq:boundA'_2} is similar and we omit it. 
\end{proof}

\begin{proof}
[Proof of Lemma~\ref{lem:repair'}]
Fix $\omegas,\omegae$ with $\omegas\in\Ap_{n,k-1}$ and $[\omegas,\omegae]_k\in  \AAp_{n, k}$ and let $\pi$ be a path starting from $(0,0)$. 

In the case $k>2\ell_n^2+1$ and $H_{(k-1-2\ell_n^2,k-1-\ell_n^2]}(\omega,\pi)\geq  \ell_n$, let $j_1:=2\ell_n^2-\ell_n$ and let $\pi_1\colon[0,k-1-j_1]\to\Z^d$ denote any (deterministically chosen) path with $\pi_1|_{[0,k-1-2\ell_n^2]}=\pi|_{[0,k-1-2\ell_n^2]}$ and $\pi_1(k-1-j_1)=x$, where $x$ is the site whose existence is guaranteed in the final line of \eqref{eq:A'_1}. In the case $k\le 2\ell_n^2+1$ or $H_{(k-1-2\ell_n^2,k-1-\ell_n^2]}(\omega,\pi_1)<\ell_n$, let $j_1$ denote the value of $j$ from the second line of \eqref{eq:A'_1} and let $\pi_1:=\pi|_{[0,k-1-j_1]}$. In both cases, we have ensured that $[\omegas,\omegae]_k\in G_{j_1}^{k-1-j_1,\pi_1(k-1-j_1)}$ and that
\begin{align*}
H_{(0,k]}([\omegas,\omegae]_k,\pi)\geq H_{(0,k-1-j_1]}([\omegas,\omegae]_k,\pi_1).
\end{align*}
Similarly, using the definition \eqref{eq:A'_2}, we find $j_2\in[\ell_n^2,2\ell_n^2\wedge k]$ and a path $\pi_2:[k+j_2,n]\to\Z^d$ such that $\pi_2|_{[k+2\ell_n^2,n]}=\pi|_{[k+2\ell_n^2,n]}$, $[\omegas,\omegae]_k\in \GG_{j_2-1}^{k+j_2,\pi_2(k+j_2)}$ and 
\begin{align*}
H_{(k,n]}([\omegas,\omegae]_k,\pi)\geq H_{(k+j_2,n]}([\omegas,\omegae]_k,\pi_2).
\end{align*}
Using $(k-1-j_1,\pi_1(k-1-j_1))$ and $(k+j_2+\ell_n^4,\pi_2(k+j_2+\ell_n^4))$ in place of $(k-1-j,\pi_1(k-1-j))$ and $(k+\ell_n^4+\ell_n^2,\pi_1(k+\ell_n^2))$ in \eqref{eq:slab1} and \eqref{eq:slab2}, we can now repeat the argument in the proof of Lemma \ref{lem:repair}. Namely, with $\Psl$-probability at least $1-e^{-c\ell_n^2}$, we can construct another path $\pi'$ with $\pi'|_{[0,k-1-j_1]}=\pi_1$, $\pi'_{[k+j_2+\ell_n^4,n]}=\pi_2|_{[k+j_2,n-\ell_n^4]}$ 
and such that the middle path $\pi'|_{(k-1-j_1,k+j_2+\ell_n^4]}$ is open in $[\omegas,\omegasl,\omegae]_{k-1,k+\ell_n^4}$. It is simple to check that this path satisfies all the properties in~\eqref{eq:A'_3}.  
\end{proof}
Once the idea for constructing the repairing paths is understood, the positive temperature counterpart of Lemma~\ref{lem:conc_2} can be proved in the same way as before. We leave the detail to the reader.
\section{Non-random fluctuations}\label{sec:nonrand}

\subsection{At zero temperature}\label{sec:nonrand_zero}

In this model, we do not have the super-additivity of the sequence $(a_n^\infty)_{n\in\N}$ defined by
\begin{align*}
a_n^\infty:=\E[\log N_n\mid (0,0)\leftrightarrow (n,\Z^d)]
\end{align*}
because of the conditioning. 
However, we do have an almost super-additivity and it suffices for our purpose. We use the idea in~\cite{Z10} for the first passage percolation, while that paper aims at an opposite bound. See the comment before Lemma~\ref{lem:almost_subadd}. For $n\leq m$ and $x,y\in\Z^d$, we let $N_{n,x;m,y}$ denote the number of open paths from $(n,x)$ to $(m,y)$.
\begin{lemma}
\label{lem:almost_superadd}
For every $\epsilon, \delta>0$, there exists $c_{\cref{lem:almost_superadd}}> 0$ such that for all $p> \pcr+\epsilon$ and $m,n\in\N$,
\begin{equation}
a_{n+m}^\infty\geq a_m^\infty+a_n^\infty-c_{\cref{lem:almost_superadd}}(m+n)^{1/2+\delta}.
\end{equation}
\end{lemma}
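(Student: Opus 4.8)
The plan is to decompose an open path of length $n+m$ from $(0,0)$ to $(n+m,\Z^d)$ at the intermediate time $n$, using the Markov-type structure of the model. Writing $N_{n+m}\ge \sum_y N_{0,0;n,y}\,N_{n,y;n+m,\Z^d}$, and picking out a single well-chosen intermediate vertex $y$, one gets $\log N_{n+m}\ge \log N_{0,0;n,y}+\log N_{n,y;n+m,\Z^d}$ on the event that both factors are positive. The first factor has the same law as $N_n$ conditioned on $(0,0)\leftrightarrow(n,\Z^d)$ once we condition on $(0,0)\leftrightarrow(n,y)$, and the second, by translation invariance, has the law of $N_m$ started from $(0,0)$ (the starting site need not be open). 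So after taking expectations, the subtlety is purely bookkeeping: the intermediate point $y$ is random, and we must ensure that with high probability there is \emph{some} $y$ at which both halves percolate and carry near-typical numbers of paths.

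Here is the order I would carry it out. First, condition on $\F_n$ and on $\{(0,0)\leftrightarrow(n,\Z^d)\}$; let $S:=\{y:(0,0)\leftrightarrow(n,y)\}$, which is $\F_n$-measurable and nonempty. For each $y\in S$ we have $\E[\log N_{n,y;n+m,\Z^d}\mid \F_n]=a_m^\infty$ provided $(n,y)\leftrightarrow(n+m,\Z^d)$; by Theorem~\ref{thm:largefinite} and the a.s.\ growth of the infected region, one can in fact force $y$ to be a percolation point, so the conditional expectation equals $a_m^\infty$ exactly (after correcting for the unlikely event $(n,y)\nleftrightarrow\infty$, which by Theorem~\ref{thm:large_initial} and Theorem~\ref{thm:largefinite} costs only an exponentially small error). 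Second, and this is where Theorem~\ref{thm:conc} enters, one uses the concentration inequality at $\beta=\infty$ twice: with overwhelming probability $\log N_{0,0;n,y}\ge a_n^\infty-n^{1/2+\delta}$ for a suitably chosen $y\in S$ (e.g.\ maximizing $N_{0,0;n,\cdot}$, or any $y$ that is a percolation point — here one uses that the conditional law given $(0,0)\leftrightarrow(n,y)$ is comparable to the law given $(0,0)\leftrightarrow(n,\Z^d)$, up to the polynomially-small discrepancy controlled exactly as in the paragraph after Proposition~\ref{prop:conc_zero}), and simultaneously $\log N_{n,y;n+m,\Z^d}\ge a_m^\infty-m^{1/2+\delta}$ off an event of probability $O((n+m)^{-r})$ by applying Theorem~\ref{thm:conc} to the shifted box. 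Third, combine: on the good event $\log N_{n+m}\ge a_n^\infty+a_m^\infty-(n^{1/2+\delta}+m^{1/2+\delta})$, and on the complementary event one loses at most $(n+m)\log(2d+1)$ times a probability $O((n+m)^{-r})$; choosing $r$ large absorbs this into the error term $c_{\cref{lem:almost_superadd}}(m+n)^{1/2+\delta}$. Taking expectations and using $N_{n+m}=\widetilde N_{n+m}$ on $\{(0,0)\leftrightarrow(n+m,\Z^d)\}$ gives the claim.

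The main obstacle I anticipate is handling the \emph{random} intermediate vertex $y$ cleanly. One cannot simply union-bound the concentration failure over all $y\in S$, since $|S|$ can be of order $n^d$ and the concentration error $O(n^{-r})$ would then require $r>d$, which is harmless, but the real issue is that the \emph{conditional} law of $N_{0,0;n,y}$ given $(0,0)\leftrightarrow(n,y)$ is not literally the law appearing in the definition of $a_n^\infty$. The fix is to commit to a deterministic rule for selecting $y$ from $\F_n$-data — taking $y$ to be, say, the lexicographically smallest percolation point in $S$ — and then to argue as in the passage following Proposition~\ref{prop:conc_zero} that conditioning on $(0,0)\leftrightarrow(n,y)$ versus $(0,0)\leftrightarrow(n,\Z^d)$ changes $\E[\log N_n\mid\cdot]$ by at most a polynomially small amount, using Theorems~\ref{thm:largefinite} and~\ref{thm:large_initial} together with $0\le\log N_n\le n\log(2d+1)$. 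A secondary technical point is that $\{(0,0)\leftrightarrow(n+m,\Z^d)\}\supseteq\{(0,0)\leftrightarrow(n,y)\}\cap\{(n,y)\leftrightarrow(n+m,\Z^d)\}$, so the event we condition on in $a_{n+m}^\infty$ contains the good event, which is exactly the direction needed for a lower bound — this is why the argument gives almost super-additivity rather than the reverse inequality, mirroring the remark that \cite{Z10} proves the opposite bound.
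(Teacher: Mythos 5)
Your skeleton (split at an intermediate time, pick a distinguished endpoint, use concentration on both halves) is the right one, but the way you close the argument has a genuine gap. The central problem is your claim that \emph{with overwhelming probability} there is a chosen $y$ at which the first half carries a near-typical number of paths \emph{and} the second half percolates. For a single selected point (the argmax of $N_{0,0;n,\cdot}$, say) forward percolation $(n,y)\leftrightarrow(n+m,\Z^d)$ occurs only with probability comparable to $\P((0,0)\leftrightarrow\infty)$, which is bounded away from $1$, not with probability $1-O((n+m)^{-r})$; and for a generic percolation point $y$ (e.g.\ your lexicographically smallest one) the point-to-point count $N_{0,0;n,y}$ need not be within a polynomial factor of $N_n$ at all --- paths may localize on few endpoints, and point-to-point growth rates are direction dependent, so neither Theorem~\ref{thm:conc} (which concerns $N_n$, not $N_{0,0;n,y}$) nor any comparison of the conditional laws given $(0,0)\leftrightarrow(n,y)$ versus $(0,0)\leftrightarrow(n,\Z^d)$ is available; the paper never proves, and does not need, such a comparison. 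Because your good event only has probability bounded below (not close to $1$), the final step ``take expectations and absorb the complement'' fails: on the complement you only know $\log N_{n+m}\ge 0$, and the resulting bound degrades to a constant multiple of $a_n^\infty+a_m^\infty$, which is useless.

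The paper avoids exactly this by \emph{not} taking expectations. It fixes $m\le n$, lets $x^*$ be the argmax of $N_{0,0;m,\cdot}$ (so that $N_{0,0;m,x^*}\ge N_m/(2m+1)^d$ pointwise, by pigeonhole --- no concentration for point-to-point counts is ever invoked), and then uses a second pigeonhole to find a \emph{deterministic} $x$ with $\P(x^*=x\mid(0,0)\leftrightarrow(m,\Z^d))\ge(2m+1)^{-d}$. By independence, the structural event $E_{x,m,n}=\{x^*=x,\,(0,0)\leftrightarrow(m,\Z^d)\}\cap\{(m,x)\leftrightarrow(m+n,\Z^d)\}$ has probability at least $(2m+1)^{-d}\P((0,0)\leftrightarrow\infty)^2$, which is only polynomially small, while the three concentration events (for $N_m$, $N_{m,x;m+n,\Z^d}$ and $N_{m+n}$, via Proposition~\ref{prop:conc_zero} with $r>d+1$) hold with probability $1-cm^{-d-1}$. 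Hence the intersection is non-empty, and a \emph{single} environment $\omega$ in it turns the pathwise inequality $\log N_{m+n}\ge\log N_m+\log N_{m,x;m+n,\Z^d}-d\log(2m+1)$ directly into an inequality between the deterministic quantities $a_{m+n}^\infty$, $a_m^\infty$, $a_n^\infty$ up to $O((m+n)^{1/2+\delta})$ errors. If you replace your expectation step and your treatment of the random intermediate point by this ``deterministic $x$ plus non-empty intersection'' device, your argument becomes essentially the paper's proof.
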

\begin{proof}
By symmetry we may assume $m\leq n$. Let $x^*=x^*(\omega;m)$ be the point in $(m,\Z^d)$ where we have the largest number of open paths from $(0,0)$, that is, 
\begin{align}\label{eq:xstar}
x^*(\omega;m)=\mathop{\text{argmax}}_{x\in\Z^d} N_{0,0;m,x}.
\end{align}
Since at most $(2m+1)^d$ points in $(m,\Z^d)$ can be connected to $(0,0)$, we have 
\begin{equation}
 N_{0,0;m,x^*}\ge \frac{N_m}{(2m+1)^d},
\end{equation}
and there exists $x\in \Z^d$ such that 
\begin{align*}
\P\left(x^*(\omega;m)=x \cond (0,0)\leftrightarrow (m,\Z^d)\right) \ge (2m+1)^{-d}.
\end{align*}
Let us consider the event 
\begin{equation}
\label{eq:E_mn}
E_{x,m,n}= \{(0,0)\leftrightarrow (m,\Z^d),x^*(\omega;m)=x\}\cap\{(m,x) \leftrightarrow (m+n,\Z^d)\}, 
\end{equation}
on which we have 
\begin{equation}
\begin{split}
\label{eq:superadd1}
\log N_{m+n} & \ge \log N_{0,0;m,x}+\log N_{m,x;m+n,\Z^d}\\
&\ge \log N_m+\log N_{m,x;m+n,\Z^d}-d\log(2m+1).
\end{split}
\end{equation}
Using independence, we have
\begin{equation}\label{eq:independence}
\begin{split}
\P\left(E_{x,m,n}\right) &= \P\left(x^*(\omega;m)=x, (0,0)\leftrightarrow (m,\Z^d)\right)\P\left((m,x)\leftrightarrow(m+n,\Z^d)\right)\\
&\ge(2m+1)^{-d} \P\left((0,0)\leftrightarrow \infty\right)^2.
\end{split}
\end{equation}
On the other hand, for $k\leq l$ and $y\in\Z^d$, let
\begin{align}
&E_{(k,y)\leftrightarrow(l,\Z^d)}^\infty:= \left\{(k,y)\nleftrightarrow (l,\Z^d)\text{ or }\left|\log N_{k,y;l,\Z^d}-a_{l-k}^\infty\right|\le (l-k)^{\frac12+\delta}\right\},\label{eq:m+n}
\end{align}
We know from Proposition~\ref{prop:conc_zero} that there exists $c>0$ such that 
\begin{align}\label{eq:uniform}
\P\left(E^\infty_{(0,0)\leftrightarrow (m,\Z^d)}\cap E^\infty_{(m,x)\leftrightarrow(m+n,\Z^d)}\cap E^\infty_{(0,0)\leftrightarrow(n+m,\Z^d)}\right)\geq 1-cm^{-d-1},
\end{align}
where we used $m\leq n$. In particular, by comparing with \eqref{eq:independence}, we see that for all sufficiently large $m$, the event
\begin{align*}
E_{x,m,n}\cap E^\infty_{(0,0)\leftrightarrow (m,\Z^d)}\cap E^\infty_{(m,x)\leftrightarrow(m+n,\Z^d)}\cap E^\infty_{(0,0)\leftrightarrow(n+m,\Z^d)}
\end{align*}
is non-empty. Since $E_{x,m,n}$ implies $(0,0)\leftrightarrow (m,\Z^d)$, $(m,x)\leftrightarrow  (m+n,\Z^d)$ and $(0,0)\leftrightarrow (m+n,\Z^d)$, we know that the second condition in \eqref{eq:m+n} must hold in the last three events above. For an environment $\omega$ in this intersection, we can replace the $\log$-terms in \eqref{eq:superadd1} with $a_n^\infty$, $a_m^\infty$ and $a_{m+n}^\infty$ and their respective error-terms, which gives the desired bound. 
\end{proof}

\begin{remark}
\label{rem:existence}
This proposition implies that for any $p>\pcr$,
\begin{align*}
\lim_{n\to\infty}\E[\log \widetilde{N}_n\mid (0,0)\leftrightarrow (n,\Z^d)]=\tilde\alpha_p
\end{align*}
exists, and Proposition~\ref{prop:conc_zero} together with the Borel--Cantelli lemma show that $\frac1n \log \widetilde{N}_n$ converges to $\tilde\alpha_p$ almost surely on $\{(0,0)\leftrightarrow \infty\}$. 
\end{remark} 

We turn to proving almost sub-additivity for the same sequence. We follow the argument in~\cite{Z10} again, which proves a similar result for the non-directed first passage percolation. It becomes much simpler for the directed models, as is done in~\cite[Section~3]{N18} for directed polymers with unbounded jumps. Here it gets even simpler due to the nearest neighbor nature of the model. We present a proof for the reader's convenience.

\begin{lemma}\label{lem:almost_subadd}
For every $\epsilon,\delta>0$, there exists $c_{\cref{lem:almost_subadd}}>0$ such that for all $p> \pcr+\epsilon$ and $n\in\N$,
\begin{align}
\label{eq:NRF_lower}
2a_n^\infty\ge a_{2n}^\infty-c_{\cref{lem:almost_subadd}}n^{\frac12+\delta}.
\end{align}
\end{lemma}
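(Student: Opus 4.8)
The plan is to exploit the elementary decomposition of a length-$2n$ path into two halves. Any $\omega$-open path of length $2n$ starting at $(0,0)$ passes through a point $(n,x)$ with $|x|_1\le n$, and splits there into an open path from $(0,0)$ to $(n,x)$ followed by an open path from $(n,x)$ to level $2n$; conversely every such concatenation is open. Hence
\begin{align*}
N_{2n}&=\sum_{x\,:\,|x|_1\le n}N_{0,0;n,x}\,N_{n,x;2n,\Z^d}\\
&\le (2n+1)^d\left(\max_{|x|_1\le n}N_{0,0;n,x}\right)\left(\max_{|x|_1\le n}N_{n,x;2n,\Z^d}\right),
\end{align*}
and since $\max_{|x|_1\le n}N_{0,0;n,x}\le N_n$, on the event $\{(0,0)\leftrightarrow(2n,\Z^d)\}$ (where both maxima are positive) we get
\begin{align*}
\log N_{2n}\le d\log(2n+1)+\log N_n+\max_{|x|_1\le n}\log N_{n,x;2n,\Z^d}.
\end{align*}
So it suffices to show that, conditionally on $\{(0,0)\leftrightarrow(2n,\Z^d)\}$, the expectations of $\log N_n$ and of $\max_{|x|_1\le n}\log N_{n,x;2n,\Z^d}$ are each at most $a_n^\infty+O(n^{\frac12+\delta})$.

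For this I would use the concentration estimate of Proposition~\ref{prop:conc_zero}. Since $\{(0,0)\leftrightarrow(2n,\Z^d)\}\subseteq\{(0,0)\leftrightarrow(n,\Z^d)\}$, that proposition directly controls $\log N_n$ on the conditioning event. For the midpoint maximum, the key observation is that the a priori random set of admissible midpoints lies inside the deterministic box $\{x:|x|_1\le n\}$, which has at most $(2n+1)^d$ lattice points; by translation invariance, Proposition~\ref{prop:conc_zero} gives for each such $x$ that
\begin{align*}
\P\left((n,x)\leftrightarrow(2n,\Z^d),\ \left|\log N_{n,x;2n,\Z^d}-a_n^\infty\right|\ge n^{\frac12+\delta}\right)\le c\,n^{-r},
\end{align*}
and likewise $\P((0,0)\leftrightarrow(n,\Z^d),|\log N_n-a_n^\infty|\ge n^{\frac12+\delta})\le c\,n^{-r}$. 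Let $E^\ast$ be the event that none of these $1+(2n+1)^d$ bad events occurs. By the union bound $\P((E^\ast)^c)\le c'n^{d-r}$, and since $\P((0,0)\leftrightarrow(2n,\Z^d))\ge\P((0,0)\leftrightarrow\infty)$ is bounded away from $0$ uniformly for $p>\pcr+\eps$, also $\P((E^\ast)^c\mid(0,0)\leftrightarrow(2n,\Z^d))\le c''n^{d-r}$. On $E^\ast\cap\{(0,0)\leftrightarrow(2n,\Z^d)\}$ we have $\log N_n\le a_n^\infty+n^{\frac12+\delta}$, and every midpoint $x$ with $N_{n,x;2n,\Z^d}>0$ satisfies $(n,x)\leftrightarrow(2n,\Z^d)$, hence $\log N_{n,x;2n,\Z^d}\le a_n^\infty+n^{\frac12+\delta}$ (using $a_n^\infty\ge0$); so on this event $\log N_{2n}\le 2a_n^\infty+2n^{\frac12+\delta}+d\log(2n+1)$.

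On the complementary event, still inside $\{(0,0)\leftrightarrow(2n,\Z^d)\}$, I would only use the trivial bound $\log N_{2n}\le 2n\log(2d+1)$, whose contribution to $a_{2n}^\infty$ is at most $2n\log(2d+1)\cdot c''n^{d-r}$; fixing $r>d+1$ at the outset makes this $o(n^{\frac12+\delta})$. Combining the two cases yields $a_{2n}^\infty\le 2a_n^\infty+c\,n^{\frac12+\delta}$ for all large $n$, and the constant can be enlarged to cover the remaining finitely many $n$ using $0\le a_n^\infty$ and $a_{2n}^\infty\le 2n\log(2d+1)$. The only genuinely delicate point is the bookkeeping around the conditioning: one has to transfer the concentration bound from $\{(0,0)\leftrightarrow(n,\Z^d)\}$ and its time-shifts $\{(n,x)\leftrightarrow(2n,\Z^d)\}$ to the smaller event $\{(0,0)\leftrightarrow(2n,\Z^d)\}$, and to replace the random set of midpoints by the deterministic box before invoking the union bound. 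The super-polynomial decay in Proposition~\ref{prop:conc_zero}, together with the uniform positivity of $\P((0,0)\leftrightarrow\infty)$ for $p>\pcr+\eps$, is exactly what allows these $n^d$ losses to be absorbed while still beating the trivial $O(n)$ bound on the error.
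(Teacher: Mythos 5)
Your argument is correct, and it reaches the bound by a somewhat different mechanism than the paper. Both proofs start from the same midpoint decomposition $N_{2n}\le (2n+1)^d N_n\max_x N_{n,x;2n,\Z^d}$ and both feed in the uniform concentration of Proposition~\ref{prop:conc_zero} with $r$ large, but they convert this into a statement about $a_n^\infty$ differently. The paper first uses a pigeonhole argument to fix a deterministic midpoint $x$ with $\P\left(x^{**}=x\mid (0,0)\leftrightarrow(2n,\Z^d)\right)\ge (2n+1)^{-d}$, and then argues that the intersection of $\{x^{**}=x\}$ with the three concentration events is non-empty; evaluating the path-counting inequality at that single configuration and replacing each $\log$ by $a_n^\infty$ or $a_{2n}^\infty$ plus error yields the deterministic inequality (a Zhang-style ``one good environment suffices'' argument). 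You instead take the conditional expectation directly: a union bound over the $(2n+1)^d$ deterministic midpoints in the box $\{|x|_1\le n\}$, the observation that any midpoint with a positive count automatically satisfies $(n,x)\leftrightarrow(2n,\Z^d)$ so the shifted concentration applies, and the trivial bound $\log N_{2n}\le 2n\log(2d+1)$ on the exceptional set of probability $O(n^{d-r})$, with $r>d+1$ fixed in advance. Your route is more elementary and avoids the argmax and the non-emptiness step entirely, at the cost of the polynomial union bound, which is harmless because Proposition~\ref{prop:conc_zero} decays faster than any power; the paper's formulation has the advantage of running in parallel with the superadditivity proof of Lemma~\ref{lem:almost_superadd} (where the argmax and independence of the two halves are genuinely needed) and of adapting more directly to the positive-temperature version in Lemma~\ref{lem:almost_subadd_pos}, where the chosen midpoint need not percolate and a local repair of the environment is required. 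The delicate points you flag (transferring concentration to the smaller conditioning event, replacing the random set of midpoints by a deterministic box, and the uniform lower bound on $\P((0,0)\leftrightarrow\infty)$ for $p>\pcr+\eps$) are exactly the right ones and are handled correctly.
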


\begin{proof}
Let  $x^{**}=x^{**}(\omega;n,2n)$ be the point in $(n,\Z^d)$ where the largest number of paths connecting $(0,0)$ to $(2n,\Z^d)$ go through, i.e.,
\begin{align*}
x^{**}(\omega;n,2n):=\operatorname{arg\,max}_x N_{0,0;n,x}N_{n,x;2n,\Z^d}.
\end{align*}
Since at most $(2n+1)^d$ points in $(n,\Z^d)$ can be connected from $(0,0)$, we have 
\begin{equation}
\label{eq:x^**}
\begin{split}
N_{2n} & \le (2n+1)^d N_{0,0;n,x^{**}} N_{n,x^{**};2n,\Z^d} \le (2n+1)^dN_n N_{n,x^{**};2n,\Z^d}.
\end{split}
\end{equation}
Moreover, there exists $x\in\Z^d$ such that 
\begin{align}\label{eq:lower}
\P\left(x^{**}(\omega;n,2n)=x\cond (0,0)\leftrightarrow(2n,\Z^d)\right) \ge (2n+1)^{-d}.
\end{align}
Using the events defined in \eqref{eq:m+n}, we have, by Proposition~\ref{prop:conc_zero},
\begin{align*}
\P\left(E^\infty_{(0,0)\leftrightarrow (n,\Z^d)}\cap E^\infty_{(n,x)\leftrightarrow(2n,\Z^d)}\cap E^\infty_{(0,0)\leftrightarrow(2n,\Z^d)}\right)\geq 1-cn^{-d-1}
\end{align*}
In particular, comparing with \eqref{eq:lower}, we see that for all sufficiently large $n$, there exists 
\begin{align*}
\omega\in \{(0,0)\leftrightarrow(2n,\Z^d),x^{**}(\omega;n,2n)=x\}\cap E^\infty_{(0,0)\leftrightarrow (n,\Z^d)}\cap E^\infty_{(n,x)\leftrightarrow(2n,\Z^d)}\cap E^\infty_{(0,0)\leftrightarrow(2n,\Z^d)}.
\end{align*}
Using this $\omega$ in~\eqref{eq:x^**}, we can replace $x^{**}$ by $x$ and further substitute $a_n^\infty$ and $a_{2n}^\infty$ together with their error terms. Then the conclusion follows in the same way as in Lemma \ref{lem:almost_superadd}.
\end{proof}
\begin{proof}
[Proof of Theorem~\ref{thm:NRF} for $\beta=\infty$]
Lemmas~\ref{lem:almost_superadd} and~\ref{lem:almost_subadd} yields
\begin{equation}
\left|\frac{1}{n}\E[\log N_n\mid (0,0)\leftrightarrow (n,\Z^d)]- \frac{1}{2n}\E[\log N_{2n}\mid (0,0)\leftrightarrow (2n,\Z^d)]\right| \le cn^{-\frac12+\epsilon}.
\end{equation}
By making repeated use of this bound, we have
\begin{equation}
\begin{split}
& \left|\frac1n \E[\log N_n\mid (0,0)\leftrightarrow (n,\Z^d)]-\frac{1}{2^k n}\E[\log  N_{2^k n}\mid (0,0)\leftrightarrow (2^k n,\Z^d)]\right|\\
&\quad \le cn^{-\frac12+\epsilon}\sum_{j= 0}^{k-1} 2^{-(\frac{1}{2}+\epsilon)j}. 
\end{split}
\end{equation}
Since the above sum converges as $k\to\infty$ and since Theorem~\ref{thm:ggm} implies
\begin{align*}
  \frac{1}{2^k n}\E[\log  N_{2^k n}\mid (0,0)\leftrightarrow (2^k n,\Z^d)] \xrightarrow{k\to\infty}\tilde{\alpha}_p,
\end{align*}
we obtain the desired bound.
\end{proof}

\subsection{In positive temperature}

In this section we want to extend Lemmas~\ref{lem:almost_superadd} and~\ref{lem:almost_subadd} to positive temperature, i.e., to the sequence $(a_n^\beta)_{n\in\N}$ with
\begin{align*}
a_n^\beta:=\E[\log Z_n^\beta\mid (0,0)\leftrightarrow(n,\Z^d)].
\end{align*}
We write $Z^\beta_{m,x;n,y}$ for the positive temperature counterpart to $N_{m,x;n,y}$ introduced in the beginning of Section \ref{sec:nonrand_zero}. We again start by proving almost superadditivity. We show the following slightly weaker statement since  we do not intend to prove the existence of $\lim_{n\to\infty}\frac{1}{n}a_n^\beta$ for $\beta<\infty$.

\begin{lemma}\label{lem:almost_superadd_pos}
For every $\epsilon,\delta>0$, there exists $c_{\cref{lem:almost_superadd_pos}}> 0$ such that for all $p> \pcr+\epsilon$, $n\in\N$ and all $\beta\in[0,\infty]$,
\begin{equation}
\begin{split}
&a_{2n}^\beta \geq 2a_n^\beta-c_{\cref{lem:almost_superadd_pos}}n^{\frac12+\delta}.
\end{split}
\end{equation}
\end{lemma}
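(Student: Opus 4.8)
The plan is to mimic the proof of Lemma~\ref{lem:almost_superadd} at $\beta=\infty$, using the concatenation of two independent polymer segments through a well-chosen midpoint, and to feed in the positive-temperature concentration inequality (the uniform-in-$\beta$ analogue of Proposition~\ref{prop:conc_zero}) that was established in Section~\ref{sec:conc_positive}. First I would pick the midpoint $x^{**}=x^{**}(\omega;n,2n):=\operatorname{arg\,max}_x Z^\beta_{0,0;n,x}Z^\beta_{n,x;2n,\Z^d}$, so that by the pigeonhole bound over the at most $(2n+1)^d$ reachable points,
\begin{align*}
Z^\beta_{2n}\le (2n+1)^d Z^\beta_{0,0;n,x^{**}}Z^\beta_{n,x^{**};2n,\Z^d},
\end{align*}
and hence there is a deterministic $x$ with $\P(x^{**}=x\mid (0,0)\leftrightarrow(2n,\Z^d))\ge (2n+1)^{-d}$. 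Using independence of the two time-layers, the event $\{(0,0)\leftrightarrow(2n,\Z^d),x^{**}=x\}$ intersected with the three good concentration events (the positive-temperature versions of $E^\infty_{(0,0)\leftrightarrow(n,\Z^d)}$, $E^\beta_{(n,x)\leftrightarrow(2n,\Z^d)}$, $E^\beta_{(0,0)\leftrightarrow(2n,\Z^d)}$) is non-empty for $n$ large, because the concentration bound gives these good events probability $1-cn^{-d-1}$. Note the directions are slightly different from Lemma~\ref{lem:almost_superadd}: there one lower-bounds $\log N_{m+n}$ by a product through the \emph{most-visited endpoint at level $m$}, whereas here one upper-bounds $\log Z^\beta_{2n}$ through the most-visited \emph{midpoint}; since we only want $a_{2n}^\beta\ge 2a_n^\beta-\text{error}$ and not the reverse, picking a realisation $\omega$ in the above intersection and reading off
\begin{align*}
2a_n^\beta &\le \log Z^\beta_{0,0;n,x}+\log Z^\beta_{n,x;2n,\Z^d}+2n^{\frac12+\delta}\\
&\le \log Z^\beta_{2n}+d\log(2n+1)+2n^{\frac12+\delta}\le a_{2n}^\beta + cn^{\frac12+\delta}
\end{align*}
yields the claim. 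So in fact this is closer in spirit to Lemma~\ref{lem:almost_subadd} with the inequality reversed, which is why it is stated only for the doubling $n\mapsto 2n$ rather than general $m+n$.

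The key steps, in order, are: (i) define $x^{**}$ and derive the pigeonhole product bound on $Z^\beta_{2n}$; (ii) extract the deterministic midpoint $x$ with $\P(x^{**}=x\mid\cdot)\ge(2n+1)^{-d}$; (iii) invoke the positive-temperature concentration inequality from Section~\ref{sec:conc_positive} for each of the three segments $Z^\beta_{0,0;n,\Z^d}$, $Z^\beta_{n,x;2n,\Z^d}$, $Z^\beta_{0,0;2n,\Z^d}$, together with Theorem~\ref{thm:largefinite} to pass between conditioning on $\{(0,0)\leftrightarrow(n,\Z^d)\}$ and $\{(0,0)\leftrightarrow\infty\}$, to see that the union of the three bad events has probability $<(2n+1)^{-d}\P((0,0)\leftrightarrow\infty)^2$ for $n$ large (this is where uniformity in $\beta$ and in $p>\pcr+\epsilon$ is essential); (iv) pick a realisation $\omega$ in the good intersection and substitute $a_n^\beta$, $a_{2n}^\beta$ with their error terms into the product bound to conclude.

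The main obstacle I anticipate is step (iii): making sure that the concentration inequality can indeed be applied \emph{uniformly in $\beta\in[0,\infty]$} to the \emph{restricted} partition functions $Z^\beta_{n,x;2n,\Z^d}$ and $Z^\beta_{0,0;2n,\Z^d}$ (with fixed endpoint or free endpoint), and with the conditioning event matching the connectivity event. For $\beta=\infty$ this was exactly the content of Proposition~\ref{prop:conc_zero} and the good events in~\eqref{eq:m+n}; for $\beta<\infty$ one needs the analogous statement, which the text of Section~\ref{sec:conc_positive} asserts follows from the same slab-insertion argument ("we leave the detail to the reader"). So strictly speaking the proof should either cite that positive-temperature concentration statement explicitly or note that the good events $E^\beta_{(k,y)\leftrightarrow(l,\Z^d)}$ are defined exactly as in~\eqref{eq:m+n} with $N$ replaced by $Z^\beta$ and that their probability bound is the positive-temperature Proposition~\ref{prop:conc_zero}. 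A minor secondary point is bookkeeping the $\log(2n+1)$ and $n^{\frac12+\delta}$ error terms, and handling the trivial cases $\beta=0$ (where $Z^0_n=(2d+1)^n$ deterministically and the inequality is immediate) and small $n$ by adjusting the constant $c_{\cref{lem:almost_superadd_pos}}$.
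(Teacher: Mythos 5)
There is a genuine gap, and it comes from mixing up the machinery of the two directions. You choose the \emph{midpoint} argmax $x^{**}=\operatorname{arg\,max}_x Z^\beta_{0,0;n,x}Z^\beta_{n,x;2n,\Z^d}$ and the pigeonhole bound $Z^\beta_{2n}\le (2n+1)^d Z^\beta_{0,0;n,x^{**}}Z^\beta_{n,x^{**};2n,\Z^d}$; that is exactly the setup for the almost \emph{sub}additivity (Lemmas \ref{lem:almost_subadd} and \ref{lem:almost_subadd_pos}), and it only gives information on the \emph{product} of the two factors relative to $Z^\beta_{2n}$. Your chain then needs the first inequality $2a_n^\beta \le \log Z^\beta_{0,0;n,x}+\log Z^\beta_{n,x;2n,\Z^d}+2n^{1/2+\delta}$, i.e.\ a lower bound $\log Z^\beta_{0,0;n,x}\ge a_n^\beta-n^{1/2+\delta}$ for the point-to-\emph{point} partition function. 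Nothing in your setup provides this: the concentration events of type \eqref{eq:m+n} control point-to-level quantities $Z^\beta_{k,y;l,\Z^d}$, not $Z^\beta_{0,0;n,x}$, and extracting such a bound from the pigeonhole inequality plus concentration of $\log Z^\beta_{2n}$ and $\log Z^\beta_{n,x;2n,\Z^d}$ would give $\log Z^\beta_{0,0;n,x}\gtrsim a_{2n}^\beta-a_n^\beta$, which is only $\gtrsim a_n^\beta$ if one already knows $a_{2n}^\beta\ge 2a_n^\beta-\cdots$ --- the statement you are proving. So the step is circular as written. The superadditive direction requires the construction of Lemma \ref{lem:almost_superadd}: take $x$ to be the argmax of $x\mapsto Z^\beta_{0,0;n,x}$ at level $n$, so that $Z^\beta_{0,0;n,x}\ge (2n+1)^{-d}Z^\beta_n$, use the deterministic concatenation lower bound $Z^\beta_{2n}\ge Z^\beta_{0,0;n,x}Z^\beta_{n,x;2n,\Z^d}$, independence of the two time layers, and the (uniform in $\beta$) concentration for $\log Z^\beta_n$, $\log Z^\beta_{n,x;2n,\Z^d}$ and $\log Z^\beta_{2n}$.

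A second, related omission is the positive-temperature connectivity issue that the paper's proof is specifically about. At $\beta<\infty$ the argmax point need not be joined to the origin (or to level $2n$) by an \emph{open} path, so the analogue of the event $E_{x,n,n}$ from \eqref{eq:E_mn} no longer implies $(0,0)\leftrightarrow(2n,\Z^d)$, which is needed both for the conditioning defining $a_{2n}^\beta$ and to activate the concentration events (they are vacuous when the relevant connectivity fails). The paper repairs this by adding $\{(n,x)\leftrightarrow(2n,\Z^d)\}$ to the event and invoking Theorem \ref{thm:largefinite} to show $\P(E_{x,n,n},(0,0)\nleftrightarrow(2n,\Z^d))\le e^{-cn}$, so the lower bound \eqref{eq:independence} survives up to a factor $\tfrac12$, uniformly in $p>\pcr+\epsilon$ and $\beta$. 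Your proposal neither includes the forward-percolation requirement on $(n,x)$ in the good event nor accounts for the fact that conditioning on $\{(0,0)\leftrightarrow(2n,\Z^d)\}$ does not make $(n,x)$ a forward percolation point; both must be addressed even after fixing the choice of $x$.
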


\begin{proof}
Replacing $\log N_{m,x;n,y}$ by $(\log Z_{m,x;n,y}^\beta)1_{(m,x) \leftrightarrow (n,\Z^d)}$, we can repeat the proof of Lemma~\ref{lem:almost_superadd} except that when $\beta<\infty$, the event $E_{x,n,n}$ does not guarantee $(0,0)\leftrightarrow (2n,\Z^d)$. However, by using Theorem~\ref{thm:largefinite}, we can show that
\begin{align*}
  \P(E_{x,n,n}, (0,0)\not\leftrightarrow (2n,\Z^d))
  \le e^{-cn}.
\end{align*}
Therefore, instead of \eqref{eq:independence}, we have
\begin{align*}
  \P(E_{x,n,n}, (0,0)\leftrightarrow (2n,\Z^d))
  \ge \frac{1}{2}(2n+1)^{-d}\P((0,0)\leftrightarrow\infty)^2    
\end{align*}
for all sufficiently large $n$, uniformly in $p> \pcr+\epsilon$. Since this and the positive temperature version of \eqref{eq:uniform} hold uniformly in $\beta$, so does the resulting bound. 
\end{proof}

The extension of Lemma \ref{lem:almost_subadd} to positive temperature is a bit more subtle. The problem is that we do not have~\eqref{eq:lower} since $x^{**}(\omega;n,2n)=x$ does not imply $(n,x)\leftrightarrow(2n,\Z^d)$ in positive temperature.
 
We deal with this issue by repairing the potential defect that occurs if $\{(n,x)\nleftrightarrow(2n,\Z^d)\}$, using a local surgery of the environment as in the proof of Theorem \ref{thm:conc}. The argument here is simpler because we do not have to deal with conditional expectations as in Lemma \ref{lem:mart_diff}. We give a direct proof that does not use any lemmas from Section \ref{sec:conc}.

\begin{lemma}\label{lem:almost_subadd_pos}
For any $\epsilon, \delta>0$, there exists $c_{\cref{lem:almost_subadd_pos}}>0$ such that for all $p> \pcr+\epsilon$, $n\in\N$ and all $\beta\in[0,\infty]$,
\begin{align*}
2a_{n}^\beta \ge a_{2n}^\beta-c_{\cref{lem:almost_subadd_pos}}n^{\frac12+\delta}.
\end{align*}
\end{lemma}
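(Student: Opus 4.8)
The plan is to rerun the proof of Lemma~\ref{lem:almost_subadd} with $N$ replaced by $Z^\beta$, the one genuinely new ingredient being a local surgery of the environment. I would work throughout conditionally on $\{(0,0)\leftrightarrow(2n,\Z^d)\}$, which is contained in $\{(0,0)\leftrightarrow(n,\Z^d)\}$. Let $x^{**}=x^{**}(\omega;n,2n)$ maximise $y\mapsto Z^\beta_{0,0;n,y}Z^\beta_{n,y;2n,\Z^d}$ over $\{|y|_1\le n\}$; since at most $(2n+1)^d$ points of level $n$ lie on a path from $(0,0)$, this still gives $Z^\beta_{2n}\le (2n+1)^dZ^\beta_nZ^\beta_{n,x^{**};2n,\Z^d}$, and by pigeonhole there is a deterministic $x$ with $\P(x^{**}=x\mid(0,0)\leftrightarrow(2n,\Z^d))\ge (2n+1)^{-d}$. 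Using Theorem~\ref{thm:conc} at the given $\beta$ (which we have established for all $\beta\in[0,\infty]$), together with Theorem~\ref{thm:largefinite} to exchange the conditionings on $\{(0,0)\leftrightarrow(m,\Z^d)\}$ and $\{(0,0)\leftrightarrow\infty\}$, the event $\{x^{**}=x\}$ retains positive conditional probability after also imposing $\log Z^\beta_{2n}\ge a_{2n}^\beta-(2n)^{1/2+\delta}$ and $\log Z^\beta_n\le a_n^\beta+n^{1/2+\delta}$. Hence the task reduces to producing, on a further positive-probability sub-event of a suitably enlarged probability space, the bound $\log Z^\beta_{n,x;2n,\Z^d}\le a_n^\beta+cn^{1/2+\delta}$: feeding this into $Z^\beta_{2n}\le(2n+1)^dZ^\beta_nZ^\beta_{n,x;2n,\Z^d}$ and taking logarithms gives $a_{2n}^\beta\le 2a_n^\beta+c'n^{1/2+\delta}$, and all constants are uniform in $\beta$ because the inputs are.

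The obstacle is exactly the one flagged before the statement: on $\{x^{**}=x\}$ nothing forces $(n,x)\leftrightarrow(2n,\Z^d)$, and for $\beta<\infty$ the partition function $Z^\beta_{n,x;2n,\Z^d}$ starting at a non-percolating point need not concentrate near $a_n^\beta$ (for moderate $\beta$ it can be of order $(2d+1)^n$). I would remove it by a single surgery near level $n$, in the spirit of Section~\ref{sec:conc_positive} but self-contained: adjoin an independent copy $\omegasl$ of the environment, form the environment $\omega'$ that agrees with $\omega$ on $[0,n]$, uses a fresh slab drawn from $\omegasl$ of height $\ell_n^4$ beginning a distance $O(\ell_n^2)$ above level $n$, and uses $\omega$ shifted accordingly afterwards---so $\omega'$ has the same law as $\omega$---and match every path $\pi$ from $(n,x)$ to $(2n,\Z^d)$ in $\omega$ with a path $\pi'$ from some forward-percolating site $(n,x')$ of $\omega'$ with $|x'-x|_1\le O(\ell_n^2)$ to level $2n$ in $\omega'$ that coincides with $\pi$ (up to the shift) beyond level $n+O(\ell_n^4)$ and satisfies $H_{(n,2n]}(\omega',\pi')\le H_{(n,2n]}(\omega,\pi)$, with multiplicity at most $(2d+1)^{O(\ell_n^4)}$. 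Such a matching is produced by the repair construction behind Lemmas~\ref{lem:A'isTypical}--\ref{lem:repair'}, re-derived here without appeal to Section~\ref{sec:conc}: on the early part of $\pi$ one locates, within $O(\ell_n^2)$ steps, either a forward-percolating site of $\pi$ or a forward-percolating site at distance $O(\ell_n)$ whose detour is paid for by closed sites that $\pi$ already visits; one grows its forward cone inside the fresh slab; and, because the slab is far taller than the $O(\ell_n^2)$ spatial separations at play, the coupled-zone estimate (Theorem~\ref{thm:coupled}) reconnects the cone to a nearby backward-percolating site from which $\pi$'s continuation runs. Consequently $Z^\beta_{n,x;2n,\Z^d}(\omega)\le (2d+1)^{O(\ell_n^4)}\sum_{x'}Z^\beta_{n,x';2n,\Z^d}(\omega')$, a sum of $O(\ell_n^{2d})$ terms over the sites $x'$ the repair uses; since $\omega'$ has the same law as $\omega$, each such term concentrates around $a_n^\beta$ conditionally on the percolation event the repair certifies for it, so a union bound over $x'$ places us, with conditional probability $1-O(n^{-r})$, on $\{\log Z^\beta_{n,x';2n,\Z^d}(\omega')\le a_n^\beta+n^{1/2+\delta}\text{ for all such }x'\}$. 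As $\ell_n^4=O((\log n)^8)=o(n^{1/2+\delta})$, combining these bounds yields the estimate above.

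I expect the main obstacle to be this surgery, and in particular its uniformity in $\beta\in[0,\infty]$. The delicate points are: (i) the energy of the repairing path must not exceed that of the original even by a $\beta$-dependent additive constant, which is why the detour to a percolating site is arranged so that it is compensated by closed sites the original path visits anyway (cf.\ the energy-counting definition of $\AAAp$); (ii) a repairing path must be available even when $\pi$ is far from open and its starting region is poor in open sites, which forces the use of the energy-aware events $\Ap,\AAp,\AAAp$ rather than their $\beta=\infty$ analogues; and (iii) all percolation inputs---Theorems~\ref{thm:largefinite}, \ref{thm:coupled} and~\ref{thm:large_initial}, and the goodness estimates of Lemma~\ref{lem:good}---must be used with constants uniform in $p>\pcr+\epsilon$. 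Once the repair is in place, the remaining work (the pigeonhole constant, the exchange of conditionings, and the union bound over the $O(\ell_n^{2d})$ candidate points $x'$) is routine and parallels Lemmas~\ref{lem:almost_subadd} and~\ref{lem:conc_2}.
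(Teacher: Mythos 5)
Your overall strategy is the one the paper uses: pigeonhole over the maximiser $x^{**}$, events of the form ``either not connected or $\log Z^\beta$ is within $n^{1/2+\delta}$ of the corresponding $a^\beta$'' with a polynomial union bound, insertion of a fresh slab of height $\ell_n^4$ near level $n$, an energy-aware repair with multiplicity $(2d+1)^{O(\ell_n^4)}$, and non-emptiness of the intersection because the bad events have probability $o(n^{-d})$. Your treatment of the first half (bounding $Z^\beta_{0,0;n,x}\le Z^\beta_n$ and invoking concentration on $\{(0,0)\leftrightarrow(n,\Z^d)\}$) is a legitimate simplification; the paper instead reroutes the whole path through a single point $(n+\ell_n^4,y)$ that is simultaneously backward and forward percolating in the slab-inserted environment (the event $E^{\textup{conn}}$, obtained from Theorem~\ref{thm:large_initial}) and compares both halves to $a^\beta_{n+\ell_n^4}$ via the events $\tEb$ and $\tEc$.

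There is, however, a directional slip in your description of the matching, and it sits exactly at the crux of the lemma. You need every path $\pi$ from $(n,x)$ to be matched with a path $\pi'$ \emph{starting at a forward-percolating site} $(n,x')$, so that concentration applies to $Z^\beta_{n,x';2n,\Z^d}(\omega')$. But the construction you describe --- follow $\pi$ for $O(\ell_n^2)$ steps to a forward-percolating site on (or at a paid-for distance $O(\ell_n)$ from) $\pi$, grow its forward cone through the slab, and reconnect via the coupled zone to a backward-percolating site on $\pi$'s continuation --- produces a path whose initial segment is $\pi$'s own, i.e.\ a path still starting at $(n,x)$. That only yields $Z^\beta_{n,x;2n,\Z^d}(\omega)\le(2d+1)^{O(\ell_n^4)}Z^\beta_{n,x;2n,\Z^d}(\omega')$ with the same possibly non-percolating starting point, so the difficulty the surgery is meant to remove (no uniform-in-$\beta$ concentration from a non-percolating start) is untouched. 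The repair must be launched the other way around: discard $\pi$'s initial $O(\ell_n^2)$ steps entirely (this only lowers the energy and costs $(2d+1)^{O(\ell_n^2)}$ in multiplicity), start $\pi'$ at a forward-percolating $x'\in x+[-\ell_n^2,\ell_n^2]\times\{0\}^{d-1}$, whose existence Theorem~\ref{thm:large_initial} guarantees with probability $1-e^{-c\ell_n}$, and place the energy-aware alternative (``either a good site on $\pi$, or at least $\ell_n$ closed sites on $\pi$ and a good site within distance $\ell_n$'') on the \emph{rejoining} side, i.e.\ on a site at level $n+O(\ell_n^2)$ on or near $\pi$ that percolates backward across the slab; the coupled zone of $(n,x')$, of height $\ell_n^4\gg\ell_n^2$, then connects $(n,x')$ to that site. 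This is precisely the role played in the paper by $E^{\textup{conn}}$ together with the backward half of $E^{\textup{repair}}$. With that correction, the rest of your accounting (energy monotonicity hence $\beta$-uniformity, multiplicity, the union bound over the $O(\ell_n^{2d})$ candidate points, and the probability comparison with $(2n+1)^{-d}$) goes through as you describe.
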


\begin{proof}
Recall the environments $\omegas,\omegasl,\omegae$ introduced before the proof of Lemma \ref{lem:mart_diff}. In this proof, we write $\P=\Ps\otimes\Psl\otimes\Pe$ for simplicity.
Our strategy is to show that on an event with not-too-small probability, there exist $x$ and $y\in x+[-\ell_n^2,\ell_n^2]^d$ such that
\begin{equation}\label{eq:strategy}
\begin{split}
&\frac{1}{(2n+1)^d}Z_{0,0;2n,\Z^d}^\beta([\omegas,\omegae]_n)\\
&\quad\leq Z_{0,0;n,x}^\beta([\omegas,\omegae]_n)Z_{n,x;2n;\Z^d}^\beta([\omegas,\omegae]_n)\\
&\quad\leq Z_{0,0;n+\ell_n^4,y}^\beta([\omegas,\omegasl,\omegae]_{n,n+2\ell_n^4}) \\&\quad\qquad \cdot Z_{n+\ell_n^4,y;2n+2\ell_n^4,\Z^d}^\beta([\omegas,\omegasl,\omegae]_{n,n+2\ell_n^4})(2d+1)^{2\ell_n^2+2\ell_n^4},
\end{split}
\end{equation}
and such that, with very high probability, the terms in the first and last line are close to $a_{2n}^\beta$ and $2a_{n+\ell_n^4}^\beta$ after taking logarithms. Let
\begin{align*}
x^{**}:=\operatorname{arg\,max}_x Z_{0,0;n,x}^\beta([\omegas,\omegae]_n)Z_{n,x;2n,\Z^d}^\beta([\omegas,\omegae]_n)
\end{align*}
and let $x\in\Z^d$ be such that
\begin{align}\label{eq:not_too_small_prob}
\P\left(x^{**}=x\cond(0,0)\leftrightarrow(2n,\Z^d)\text{ in }[\omegas,\omegae]_n\right)\geq \frac 1{(2n+1)^d}.
\end{align}
Then the first inequality in \eqref{eq:strategy} holds on $\{x^{**}=x\}$. Thus it remains to find an event with not-too-small probability on which the second inequality in \eqref{eq:strategy} is justified. We start by modifying the events $E_{(k,y)\leftrightarrow(l,\Z^d)}$ from Section \ref{sec:nonrand_zero} as follows: 
\begin{align}\label{eq:2n}
\tEa&:=\left\{ 
\begin{array}{l}
(0,0)\nleftrightarrow(2n,\Z^d)\text{ in }[\omegas,\omegae]_{n}\text{ or}\\
|\log Z^\beta_{2n}([\omegas,\omegae]_{n})-a_{2n}^\beta|\leq (2n)^{\frac 12+\delta}
\end{array}\right\},\\
\tEb&:=\left\{ 
\begin{array}{l}
(0,0)\nleftrightarrow(n+\ell_n^4,\Z^d)\text{ in }[\omegas,\omegasl,\omegae]_{n,n+2\ell_n^4}\text{ or}\\
|\log Z^\beta_{n+\ell_n^4}([\omegas,\omegasl,\omegae]_{n,n+2\ell_n^4})-a_{n+\ell_n^4}^\beta|\leq 2n^{\frac 12+\delta}
\end{array}\right\},\\ 
\tEc&:=\bigcap_{y\in[-2n,2n]^d}\left\{ 
\begin{array}{l} 
(n+\ell_n^4,y)\nleftrightarrow(2n+2\ell_n^4,\Z^d)\text{ in }[\omegas,\omegasl,\omegae]_{n,n+2\ell_n^4}\text{ or}\\
|\log Z^\beta_{n+\ell_n^4,y;2n+2\ell_n^4,\Z^d}([\omegas,\omegasl,\omegae]_{n,n+2\ell_n^4})-a_{n+\ell_n^4}^\beta|\leq 2n^{\frac 12+\delta}
\end{array}\right\}.\label{eq:n2n}
\end{align}
By Theorem \ref{thm:conc} we have
\begin{align}\label{eq:high_prob}
\P\left(\tEa\cap \tEb \cap \tEc\right)\geq 1-cn^{-d-1}
\end{align}
for some constant $c$ independent of $\beta$. Next, let
\begin{align}\label{eq:def_connect}
E^{\text{conn}}:=\left\{
\begin{array}{l}
\text{There exists }y\in x+[-\ell_n^2,\ell_n^2]\times\{0\}^{d-1}\text{ such that }(0,\Z^d)\leftrightarrow(n+\ell_n^4,y)\\
\text{and }(n+\ell_n^4,y)\leftrightarrow (2n+2\ell_n^4,\Z^d)\text{ in }[\omegas,\omegasl,\omegae]_{n,n+2\ell_n^4}.
\end{array}\right\}. 
\end{align}
By Theorem \ref{thm:large_initial}, it is easy to see that this event has very high probability:
\begin{align}\label{eq:high_prob1}
\P\left(E^{\text{conn}}\right)\geq 1-e^{-c\ell_n}.
\end{align}
Indeed, by dividing $x+[-\ell_n^2,\ell_n^2]\times\{0\}^{d-1}$ into sub-intervals of size $\ell_n$ and applying Theorem~\ref{thm:large_initial} to each sub-interval, we can find $\ell_n$ many forward percolation points in $x+[-\ell_n^2,\ell_n^2]\times\{0\}^{d-1}$, with probability more than $1-e^{-c\ell_n}$. Then, by applying Theorem~\ref{thm:large_initial} to this set of forward percolation points, we can find a backward percolation point with probability more than $(1-e^{-c\ell_n})^2$.

\smallskip We now define a mapping between paths going through $(0,0), (n,x)$ and $(2n,\Z^d)$ in $[\omegas,\omegae]_n$ and repairing paths going through $(0,0), (n+\ell_n^4,y)$ and $(2n+2\ell_n^4,\Z^d)$ in $[\omegas,\omegasl,\omegae]_{n,n+2\ell_n^4}$, where $y\in x+[-\ell_n^2,\ell_n^2]$ is as in \eqref{eq:def_connect}, using the same construction as in Lemmas \ref{lem:A'isTypical} and \ref{lem:repair'}. This ensures that not too many paths are mapped onto the same repairing path and that the mapping reduces the number of closed sites along the path. The first condition is important to control the error-term in \eqref{eq:strategy} while the second condition ensures that the error-term does not depend on $\beta$. Let 
\begin{align*}
\accentset{\to}E^{\text{repair}}&:=\left\{
\begin{array}{l}
\text{For every path }\pi\text{ from } (0,0)\text{ to }(2n,\Z^d),\text{ either there exists }\\
j\in[n-2\ell_n^2,n-\ell_n^2]\text{ such that }[\omegas,\omegasl,\omegae]_{n,n+2\ell_n^4}\in \C_{n+\ell_n^4-j}^{j,\pi(j)},\\
\text{or }H_{(n-2\ell_n^2,n-\ell_n^2]}([\omegas,\omegae]_{n},\pi)\geq \ell_n\text{ and there exists }\\
z\in \pi(n-2\ell_n^2)+[-\ell_n,\ell_n]^d \text{ such that }
[\omegas,\omegasl,\omegae]_{n,n+2\ell_n^4}\in \C_{\ell_n^4+2\ell_n^2-\ell_n}^{n-2\ell_n^2+\ell_n, z}.
\end{array}
\right\},\\
\accentset\leftarrow E^{\text{repair}}&:=\left\{
\begin{array}{l}
\text{For every path }\pi\text{ from } (0,0)\text{ to }(2n,\Z^d),\text{ either there exists }\\
j\in[n+\ell_n^2,n+2\ell_n^2]\text{ such that }[\omegas,\omegasl,\omegae]_{n,n+2\ell_n^4}\in
\accentset\leftarrow \C_{j-n+\ell_n^4}^{j+2\ell_n^4,\pi(j)},\\
\text{or }H_{(n+\ell_n^2,n+2\ell_n^2]}([\omegas,\omegae]_{n},\pi)\geq \ell_n\text{ and there exists}\\
z\in \pi(n+2\ell_n^2)+[-\ell_n,\ell_n]^d\text{ such that }[\omegas,\omegasl,\omegae]_{n,n+2\ell_n^4}\in \accentset\leftarrow\C_{\ell_n^4+2\ell_n^2-\ell_n}^{n+2\ell_n^4+2\ell_n^2-\ell_n, z}.
\end{array}
\right\},\\
E^{\text{repair}}&:= \accentset\rightarrow E^{\text{repair}}\cap \accentset\leftarrow E^{\text{repair}}.
\end{align*}
These definitions are similar to \eqref{eq:A'_1} and \eqref{eq:A'_2}, except that we require that the coupled zone from certain points grow regularly as in Definition~\ref{def:coupled}, instead of the abstract goodness condition from Definition \ref{def:good}. We can show that
\begin{align}\label{eq:high_prob2}
\P\left(E^{\text{repair}}\right)\geq 1-e^{-c\ell_n}
\end{align}
in a similar way to Lemma~\ref{lem:A'isTypical}, using Theorem \ref{thm:coupled} in place of Lemma~\ref{lem:good} for \eqref{eq:either} and \eqref{eq:or}. 
We can now conclude: Comparing \eqref{eq:not_too_small_prob} with \eqref{eq:high_prob}, \eqref{eq:high_prob1} and \eqref{eq:high_prob2}, we see that
\begin{align}
\tEa\cap \tEb \cap \tEc \cap E^{\text{repair}}\cap E^{\text{conn}}\cap \{x^{**}=x\}
\label{eq:Eandx^**}
\end{align}
is non-empty for all $n$ large enough under the condition $(0,0)\leftrightarrow(2n,\Z^d)$ in $[\omegas,\omegae]_n$. 
If \eqref{eq:strategy} holds on~\eqref{eq:Eandx^**}, we can take logarithms and substitute the bounds from \eqref{eq:2n}--\eqref{eq:n2n} to obtain
\begin{align*}
a_{2n}^\beta\leq 2a_{n+\ell_n^4}^\beta+Cn^{1/2+\delta}\leq 2a_n^\beta+C'n^{\frac12+\delta}.
\end{align*}
It remains to verify \eqref{eq:strategy} on the event~\eqref{eq:Eandx^**}. Let $\pi$  be a path with $\pi(0)=0$ and $\pi(n)=x$. Using the definition of $E^{\text{repair}}$, we find $j_1\in[n-2\ell_n^2,n-\ell_n^2]$, $j_2\in [n+\ell_n^2,n+2\ell_n^2]$ and paths $\pi_1:[0,j_1]\to\Z^d$ and $\pi_2:=[j_2+2\ell_n^4,2n+2\ell_n^4]\to\Z^d$ such that 
\begin{align}
\pi(m)&=\pi_1(m) \text{ for }m\in[0,n-2\ell_n^2],\label{eq:preimage1}\\
\pi(m)&=\pi_2(m+2\ell_n^4) \text{ for }m\in[n+2\ell_n^2,2n],\label{eq:preimage2}\\
H_{(0,n]}([\omegas,\omegae]_n,\pi)&\geq H_{(0,j_1]}([\omegas,\omegasl,\omegae]_{n,n+2\ell_n^4},\pi_1),\label{eq:less1}\\
H_{(n,2n]}([\omegas,\omegae]_n,\pi)&\geq H_{(j_2+2\ell_n^4,2n+2\ell_n^4]}([\omegas,\omegasl,\omegae]_{n,n+2\ell_n^4},\pi_2)\label{eq:less2}
\end{align}
and such that 
\begin{align*}
[\omegas,\omegasl,\omegae]_{n,n+2\ell_n^4}\in C^{j_1,\pi_1(j_1)}_{n-j_1+\ell_n^4}\cap \accentset{\leftarrow}C^{j_2+2\ell_n^4,\pi_2(j_2+2\ell_n^4)}_{j_2-n+\ell_n^4}.
\end{align*}
\begin{figure}[h]
\includegraphics[width=\textwidth]{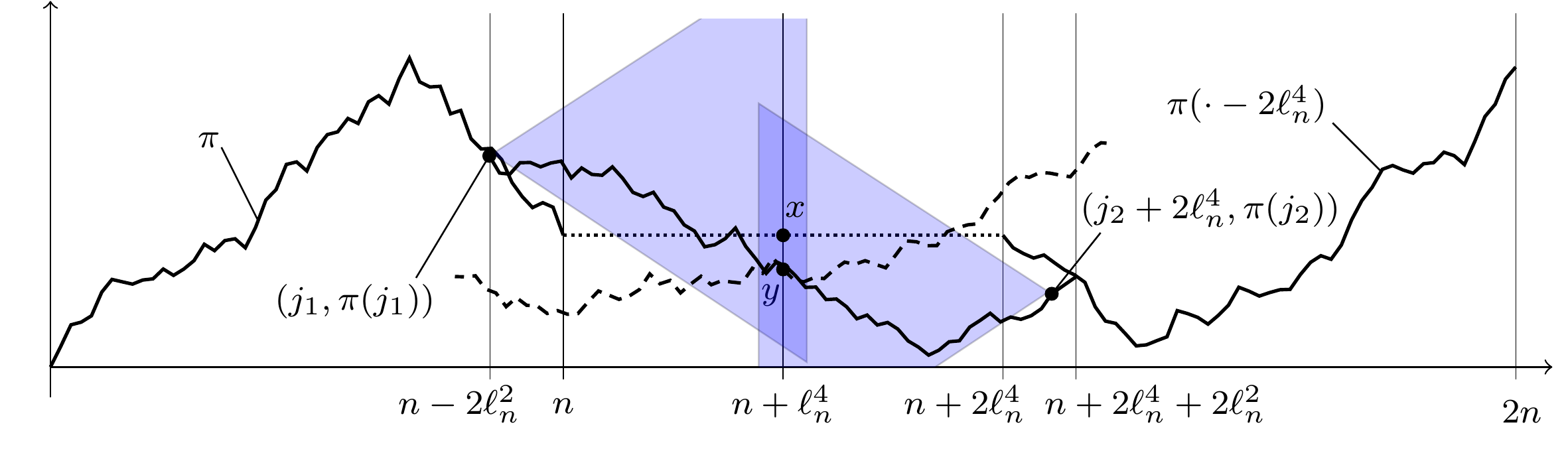} 
\caption{Illustration of the event on which \eqref{eq:strategy} is justified. The event $E^{\text{\,repair}}$ guarantees that we find points from which the coupled zone grows linearly (shaded) and which are not too far away from time $n$ and not too far away from $\pi$. The event $E^{\text{\,conn}}$ guarantees that we find a forward and backward percolation point $y$ at time $n$ in the intersection of cones. The repairing path is the solid path connecting $(j_1,\pi(j_1))$ and $(j_2+2\ell_n^4,\pi(j_2))$.}\label{fig:non_rand}
\end{figure}
See Figure \ref{fig:non_rand}. 
Let $y\in x+[-\ell_n^2,\ell_n^2]$ be as in \eqref{eq:def_connect}. The definition of the coupled zone \eqref{eq:def_coupled} implies that, in $[\omegas,\omegasl,\omegae]_{n,n+2\ell_n^4}$,
\begin{align*}
(j_1,\pi_1(j_1))\leftrightarrow (n+\ell_n^4,y)\leftrightarrow (j_2+2\ell_n^4,\pi_2(j_2+2\ell_n^4)).
\end{align*}
We can therefore extend $\pi_1$ and $\pi_2$ to a path $\pi':[0,2n+2\ell_n^4]$ using only open sites in $[j_1,j_2]$. Together with \eqref{eq:less1} and \eqref{eq:less2}, we have 
\begin{align*}
H_{2n}([\omegas,\omegae]_n,\pi)\geq H_{2n+2\ell_n^4}([\omegas,\omegasl,\omegae]_{n,n+2\ell_n^4},\pi')
\end{align*}
In addition, by \eqref{eq:preimage1} and \eqref{eq:preimage2}, at most $(2d+1)^{2\ell_n^2+2\ell_n^4}$ paths $\pi$ are mapped onto the same path $\pi'$, which finishes the proof.
\end{proof}

\begin{proof}[Proof of Theorem \ref{thm:NRF} for $\beta<\infty$]
Note that, since $\P((0,0)\leftrightarrow\infty)>0$ and since the almost sure limit in \eqref{eq:free} is determistic, we have
\begin{align*}
\lim_{n\to\infty}\frac{1}{n}\E[\log Z^\beta_n\mid(0,0)\leftrightarrow(n,\Z^d) ]=\lim_{n\to\infty}\frac{1}{n}\E[\log Z^\beta_n]=\f(\beta,p).
\end{align*}
Now the argument is the same as in zero temperature, with Lemmas \ref{lem:almost_superadd_pos} and \ref{lem:almost_subadd_pos} replacing Lemmas \ref{lem:almost_superadd} and \ref{lem:almost_subadd}. The constants in those lemmas are uniform, so the resulting bound is uniform as well.
\end{proof}

\appendix

\section{Comments on oriented percolation results}
In this section, we indicate where one can find proofs of the results listed in Section~\ref{sec:known}. All the proofs provide the uniform controls on the constants as in Theorems~\ref{thm:largefinite}--\ref{thm:large_initial}, though not always clearly stated. For readers' convenience, we make some comments on how to get the uniform bounds. Since what follows are commentaries to the references, we make no attempt to make it self-contained. In particular, we adopt the notations used therein. 

\subsection{On Theorem~\ref{thm:largefinite}}
\label{sec:5.1}
In dimension $d=1$ the proof is given in \cite[Section 12]{D84}. To get a uniform bound, we need $\gamma(\alpha(\pcr+\eps)/2)$ in \cite[(11.1)]{D84} to be positive uniformly in $p>\pcr+\eps$. The claim is first proved for $1$-dependent percolation with $p$ very close to $1$. It is then extended to the whole range of parameters using a renormalization construction that yields an embedded percolation process $\eta$ whose parameter is arbitrarily close to $1$. To make the bound uniform, one needs to verify that the spatial scale parameter $L$ can be chosen such that, uniformly in $p>\pcr+\eps$, the percolation parameter of the embedded process satisfies $\P(\eta(z)=1)>1-3^{-36/(1-q)}$, where $q=3/4$. This can be checked by noting that $\P(\eta(z)=1)$ is increasing in $p$.

In higher dimensions $d\geq 2$, the corresponding statement for the contact process is proved as the verification of the condition (b) in \cite[(5.2) Theorem]{D91}, and it can easily be adapted to the oriented percolation setting. The proof uses the same one-dimensional renormalization construction as above for which a uniform exponential tail is verified. This construction spends some time in ``looking for an occupied copy of $I=[-J,J]^d$''~\cite[fifth line on p.16]{D91}, where $J$ is a uniformly bounded constant arising from the renormalization. In order to get a uniform bound, we have to control this waiting time. But the waiting time is stochastically dominated by $J$ times a geometric random variable with parameter $\delta$, where in our notation, 
\begin{align*}
\delta=\P((0,0)\leftrightarrow(J,x)\text{ for all }x\in[-J,J]^d).
\end{align*}
Since this $\delta$ is increasing in $p$, it follows that the waiting time is longest for $p=\pcr+\epsilon$. 

\subsection{On Theorem \ref{thm:coupled}}
\label{sec:5.3}
The corresponding result for the contact process in random environment can be found in \cite[eq.~(43)]{GG12} and adaptation to our discrete time setting is routine. The constants $A,B,\gamma$ therein can be chosen uniformly in $\lambda\in\Lambda$, where $\Lambda=[\lambda_{\text{min}},\lambda_{\text{max}}]^{\Z^d}$ for $\lambda_{\text{cr}}(\Z^d)<\lambda_{\text{min}}<\lambda_{\text{max}}$. Indeed, after a little general arguments, it boils down to proving \cite[eq.~(46)]{GG12} but its right-hand side is already expressed only in terms of $\alpha$ and $\beta$ which are the contact processes with birth rate $\lambda_{\text{min}}$. The constant $\gamma$ in \cite[eq.~(43)]{GG12} corresponds to our $v$ in Definition~\ref{def:coupled}. 

\subsection{On Theorem \ref{thm:large_initial}}
The result for the contact process is proved in \cite[Theorem 2.30(b)]{L99}, based on the renormalization construction introduced in \cite{BG90, BG91}, and the adaptation to oriented percolation follows the same lines as in Section~\ref{sec:5.1}.

\section*{Acknowledgement}
The authors are grateful to the reviewer for very careful reading. RF was supported by ISHIZUE 2019 of Kyoto University Research Development Program and in part by KAKENHI 17H01093 and 18H03672. SJ was supported by a JSPS Postdoctoral Fellowship for Research in Japan, Grant-in-Aid for JSPS Fellows 19F19814.


\begin{thebibliography}{10}

\bibitem{BG90}
Carol Bezuidenhout and Geoffrey Grimmett.
\newblock The critical contact process dies out.
\newblock {\em Ann. Probab.}, 18(4):1462--1482, 1990.

\bibitem{BG91}
Carol Bezuidenhout and Geoffrey Grimmett.
\newblock Exponential decay for subcritical contact and percolation processes.
\newblock {\em Ann. Probab.}, 19(3):984--1009, 1991.

\bibitem{BLM13}
St\'{e}phane Boucheron, G\'{a}bor Lugosi, and Pascal Massart.
\newblock {\em Concentration inequalities}.
\newblock Oxford University Press, Oxford, 2013.
\newblock A nonasymptotic theory of independence, With a foreword by Michel
  Ledoux.

\bibitem{B66}
D.~L. Burkholder.
\newblock Martingale transforms.
\newblock {\em Ann. Math. Statist.}, 37:1494--1504, 1966.

\bibitem{Com17}
Francis Comets.
\newblock {\em Directed polymers in random environments}, volume 2175 of {\em
  Lecture Notes in Mathematics}.
\newblock Springer, Cham, 2017.
\newblock Lecture notes from the 46th Probability Summer School held in
  Saint-Flour, 2016.

\bibitem{CFNY15}
Francis Comets, Ryoki Fukushima, Shuta Nakajima, and Nobuo Yoshida.
\newblock Limiting results for the free energy of directed polymers in random
  environment with unbounded jumps.
\newblock {\em J. Stat. Phys.}, 161(3):577--597, 2015.

\bibitem{CSY03}
Francis Comets, Tokuzo Shiga, and Nobuo Yoshida.
\newblock Directed polymers in a random environment: path localization and
  strong disorder.
\newblock {\em Bernoulli}, 9(4):705--723, 2003.

\bibitem{Dar91}
R.~W.~R. Darling.
\newblock The {L}yapunov exponent for products of infinite-dimensional random
  matrices.
\newblock In {\em Lyapunov exponents ({O}berwolfach, 1990)}, volume 1486 of
  {\em Lecture Notes in Math.}, pages 206--215. Springer, Berlin, 1991.

\bibitem{DCKNPS}
Hugo Duminil-Copin, Harry Kesten, Fedor Nazarov, Yuval Peres, and Vladas
  Sidoravicius.
\newblock On the number of maximal paths in directed last-passage percolation.
\newblock {\em Ann. Probab.}, 48(5):2176--2188, 2020.

\bibitem{D84}
Richard Durrett.
\newblock Oriented percolation in two dimensions.
\newblock {\em Ann. Probab.}, 12(4):999--1040, 1984.

\bibitem{D91}
Rick Durrett.
\newblock The contact process, 1974--1989.
\newblock In {\em Mathematics of random media ({B}lacksburg, {VA}, 1989)},
  volume~27 of {\em Lectures in Appl. Math.}, pages 1--18. Amer. Math. Soc.,
  Providence, RI, 1991.

\bibitem{FJ19}
Ryoki Fukushima and Stefan Junk.
\newblock Zero temperature limit for the {B}rownian directed polymer among
  {P}oissonian disasters.
\newblock {\em Ann. Appl. Probab.}, 29(6):3821--3860, 2019.

\bibitem{GGM17}
Olivier Garet, Jean-Baptiste Gou\'{e}r\'{e}, and R\'{e}gine Marchand.
\newblock The number of open paths in oriented percolation.
\newblock {\em Ann. Probab.}, 45(6A):4071--4100, 2017.

\bibitem{GG12}
Olivier Garet and R\'{e}gine Marchand.
\newblock Asymptotic shape for the contact process in random environment.
\newblock {\em Ann. Appl. Probab.}, 22(4):1362--1410, 2012.

\bibitem{GH02}
Geoffrey Grimmett and Philipp Hiemer. 
\newblock Direced percolation and random walk.
\newblock In {\em In and out of equilibrium (Mambucaba 2000)}, volume 51 of {\em Progress in Probability}, 273--297, Birkh\"auser, Boston, 2002.

\bibitem{Lac12}
Hubert Lacoin.
\newblock Existence of an intermediate phase for oriented percolation.
\newblock {\em Electron. J. Probab.}, 17:no. 41, 17 pages, 2012.

\bibitem{L99}
Thomas~M. Liggett.
\newblock {\em Stochastic interacting systems: contact, voter and exclusion
  processes}, volume 324 of {\em Grundlehren der Mathematischen Wissenschaften
  [Fundamental Principles of Mathematical Sciences]}.
\newblock Springer-Verlag, Berlin, 1999.

\bibitem{LW09}
Quansheng Liu and Fr\'{e}d\'{e}rique Watbled.
\newblock Exponential inequalities for martingales and asymptotic properties of
  the free energy of directed polymers in a random environment.
\newblock {\em Stochastic Process. Appl.}, 119(10):3101--3132, 2009.

\bibitem{N18}
Shuta Nakajima.
\newblock Concentration results for directed polymer with unbounded jumps.
\newblock {\em ALEA Lat. Am. J. Probab. Math. Stat.}, 15(1):1--20, 2018.

\bibitem{Yos08b}
Nobuo Yoshida.
\newblock Phase transitions for the growth rate of linear stochastic
  evolutions.
\newblock {\em J. Stat. Phys.}, 133(6):1033--1058, 2008.

\bibitem{Yos10}
Nobuo Yoshida.
\newblock Localization for linear stochastic evolutions.
\newblock {\em J. Stat. Phys.}, 138(4-5):598--618, 2010.

\bibitem{Z10}
Yu~Zhang.
\newblock On the concentration and the convergence rate with a moment condition
  in first passage percolation.
\newblock {\em Stochastic Process. Appl.}, 120(7):1317--1341, 2010.

\end{thebibliography}

\end{document}